\theoremstyle{plain}
\newtheorem{Theorem}{Theorem}[section]
\newtheorem{Proposition}[Theorem]{Proposition}
\newtheorem{Corollary}[Theorem]{Corollary}
\newtheorem{Lemma}[Theorem]{Lemma}
\theoremstyle{definition}
\newtheorem{Definition}[Theorem]{Definition}
\newtheorem{Example}[Theorem]{Example}
\theoremstyle{remark}
\DeclareMathOperator{\rk}{rk}
\newcommand{\gaussm}[3]{\genfrac{[}{]}{0pt}{}{#1}{#2}_{#3}}
\newcommand{\gauss}[3]{\genfrac{[}{]}{0pt}{}{\mathbb{F}_{#3}^{#1}}{#2}}
\newcommand{\rdist}{\mathrm{d}_{\mathrm{r}}}
\newcommand{\fdist}{\mathrm{d}_{\mathrm{G}}}
\newcommand{\sdist}{\mathrm{d}_{\mathrm{s}}}
\newcommand{\idist}{\mathrm{d}_{\mathrm{i}}}
\newcommand{\MRD}{\texttt{MRD}} 
\newcommand{\highlight}[1]{#1}
\begin{document}


\title{Bounds for flag codes}

\date{}

\author{Sascha Kurz}
\address{Sascha Kurz, University of Bayreuth, 95440 Bayreuth, Germany}
\email{sascha.kurz@uni-bayreuth.de}

\begin{abstract}
The application of flags to network coding has been introduced recently, see e.g.\ \cite{with_flags}. It is a variant 
to random linear network coding and explicit routing solutions for given networks. Here we study lower and upper bounds for the 
maximum possible cardinality of a corresponding flag code with given parameters.

\noindent
\textbf{Keywords:} Network coding, flag codes, error correcting codes, Grassmann distance on flags, bounds\\
\textbf{MSC:} 51E20, 94B65; 94B99, 05B25 
\end{abstract}

\maketitle

\section{Introduction}
\label{sec_intro}

Let $q$ be a prime power and $\mathbb{F}_q$ the finite field with $q$ elements. For given integers 
$1\le k\le v$ a $k$-dimensional subspace $U$ of $\mathbb{F}_q^v$ is called a \emph{$k$-space} (in $\mathbb{F}_q^v$). 
Sometimes we also use the language of projective geometry, i.e., we \highlight{speak} of \emph{points}, \emph{lines}, \emph{planes}, 
and \emph{hyperplanes} for $1$-spaces, $2$-spaces, $3$-spaces, 
and $(v-1)$-spaces, respectively.  
The set of all $k$-spaces in $\mathbb{F}_q^v$ is abbreviated by $\gauss{v}{k}{q}$ and its cardinality 
is denoted by the \emph{$q$-binomial Gaussian coefficient} $\gaussm{v}{k}{q}=\prod_{i=1}^{k} \frac{q^{v-k+i}-1}{q^{i}-1}$. 
A \emph{full flag} over $\mathbb{F}_q^v$ is a sequence of nested subspaces with dimensions from $1$ to $v-1$. 
If not all of these dimensions need to occur, we speak of a \emph{flag}. (Full) \emph{flag codes} are collections 
of flags. The use of flag codes for network coding was proposed in \cite{with_flags}. In \cite{phd_liebhold} the author argues that 
subspace coding with flags can be ranged between random linear network coding, using constant dimension codes, and optimized 
routing solutions, whose computation is time-consuming. For special multicast networks network coding solutions also lead to hard 
combinatorial problems, see e.g.~\cite{cai2019network, etzion2020subspace} for so-called generalized combination networks. 
Here, we will not go into the details of the used chanel model or comparisons 
with other methods for network coding. Moreover, we will not consider the problem of encoding and decoding algorithms. The interested reader 
can find more details on this e.g.\ in \cite{fourier2020degenerate,phd_liebhold,with_flags,liebhold2018generalizing}. Here we study 
lower and upper bounds for the maximum possible cardinality $A_q^f(v,d)$ of those flag codes.

The remaining part of this paper is organized as follows. In Section~\ref{sec_preliminaries} we introduce the necessary basic definitions 
and the first bounds for $A_q^f(v,d)$. An integer linear programming formulation for the exact determination of $A_q^f(v,d)$ is the topic 
of Section~\ref{sec_ILP}. Parametric bounds on the maximum possible codes sizes are determined in Section~\ref{sec_bounds}. The case of non-full 
\highlight{flags} and other variants are broached in Section~\ref{sec_non_full}. We summarize the obtained exact values and bounds for $A_q^f(v,d)$ for small 
parameters in Section~\ref{sec_exact_values_and_bounds}. The paper is finished with a brief conclusion and a few remarks on open problems and 
future research directions in Section~\ref{sec_conclusion}.  

\section{Preliminaries and first bounds}
\label{sec_preliminaries}

In the following $q$ is always a prime power. For two subspaces $U,W$ in $\mathbb{F}_q^v$ we write $U\le W$ iff $U$ is 
contained in $W$. If $U\le W$ and $U\neq W$, then we write $U<W$. The dimension of a subspace $U$ of $\mathbb{F}_q^v$ 
is denoted by $\dim(U)$. The set of all subspaces of $\mathbb{F}_q^v$ is turned into a metric space via the \emph{injection 
distance}
$$
  \idist(U,W)=\dim(U+W)-\min\{\dim(U),\dim(W)\}=\max\{\dim(U),\dim(W)\}-\dim(U\cap W)
$$  
or the \emph{subspace distance}
$$
  \sdist(U,W)=\dim(U+W)-\dim(U\cap W)=\dim(U)+\dim(W)-2\cdot\dim(U\cap W).
$$
Note that for $\highlight{U},W\in\gauss{v}{k}{q}$ we have
\begin{eqnarray*} 
  \idist(U,W) &=& \dim(U+W)-k=k-\dim(U\cap W)\,\,\text{ and}\\
  \sdist(U,W) &=& 2k-2\dim(U\cap W)=2\cdot\idist(U,W).
\end{eqnarray*} 
By $A_q^i(v,d;k)$ we denote the maximum possible cardinality of a set $\mathcal{C}\subseteq\gauss{v}{k}{q}$, where $\idist(U,W)\ge d$ 
for all pairs of different elements $U$, $W$ of $\mathcal{C}$. Replacing the injection distance by the subspace distance we obtain 
$A_q^s(v,d;k)$, where $A_q^i(v,d;k)=A_q^s(v,2d;k)$. Bounds for $A_q^s(v,2d;k)$ can be found in \cite{heinlein2016tables} and the 
corresponding online tables at \url{www.subspacecodes.uni-bayreuth.de}.  
 
\begin{Lemma}
  \label{lemma_intersection_distance}
  For two subspaces $U,W\in\gauss{v}{k}{q}$ the following statements are equivalent
  \begin{itemize}
    \item[(1)] $\idist(U,W)\le d$;
    \item[(2)] $\dim(U\cap W)\ge k-d$;
    \item[(3)] $\dim(U+W)\le k+d$;
    \item[(4)] there exists a subspace $X\le\mathbb{F}_q^v$ with $X\le U$, $X\le W$, and $\dim(X)\ge k-d$; and 
    \item[(5)] there exists a subspace $X\le\mathbb{F}_q^v$ with $X\ge U$, $X\ge W$, and $\dim(X)\le k+d$;
  \end{itemize}
\end{Lemma}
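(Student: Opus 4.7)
The plan is to establish the equivalences by going through the short chain (1)$\Leftrightarrow$(2)$\Leftrightarrow$(3), and then deriving (4) from (2) and (5) from (3), each with an almost trivial construction.

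First I would unpack the definition of $\idist$ given immediately before the lemma. Since $U,W\in\gauss{v}{k}{q}$, we have $\idist(U,W)=k-\dim(U\cap W)$, so the inequality $\idist(U,W)\le d$ is literally the same as $\dim(U\cap W)\ge k-d$, giving (1)$\Leftrightarrow$(2). For (2)$\Leftrightarrow$(3) I would invoke the standard dimension formula $\dim(U+W)+\dim(U\cap W)=\dim(U)+\dim(W)=2k$, which turns any lower bound on $\dim(U\cap W)$ into the corresponding upper bound on $\dim(U+W)$ and vice versa.

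Next I would show (2)$\Leftrightarrow$(4). For the forward direction, the witness is simply $X:=U\cap W$ itself: it is contained in both $U$ and $W$ and has dimension at least $k-d$ by assumption. For the converse, any $X$ contained in both $U$ and $W$ lies in $U\cap W$, so $\dim(U\cap W)\ge \dim(X)\ge k-d$. The implication (3)$\Leftrightarrow$(5) is dual: for the forward direction take $X:=U+W$, which contains both $U$ and $W$ and satisfies $\dim(X)\le k+d$; for the converse, any $X$ containing both $U$ and $W$ contains $U+W$, so $\dim(U+W)\le\dim(X)\le k+d$.

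I do not anticipate a real obstacle here, since everything reduces to the two basic facts already recalled in the preceding lines of the paper (the formula for $\idist$ on equidimensional subspaces and the modular identity $\dim(U+W)+\dim(U\cap W)=\dim(U)+\dim(W)$). The only minor care needed is to choose the right witnesses in (4) and (5); taking $X=U\cap W$ and $X=U+W$ respectively keeps the argument symmetric and avoids having to produce intermediate subspaces of prescribed dimension.
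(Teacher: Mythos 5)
Your proof is correct and follows essentially the same route as the paper: (1)--(3) are immediate from the definitions and the modular dimension formula, and (4) and (5) are handled by observing that $X\le U,W$ is equivalent to $X\le U\cap W$ (dually $X\ge U,W$ iff $X\ge U+W$), with $U\cap W$ and $U+W$ serving as the canonical witnesses.
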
 
\begin{proof}
  The equivalence of (1)-(3) is obvious from the definition. For (4) we remark that the conditions $X\le U$ and $X\le W$ are equivalent to 
  $X\le U\cap W$. Similarly, for (5) the conditions $X\ge U$ and $X\ge W$ are equivalent to 
  $X\ge U+W$.  
\end{proof} 
 
\begin{Definition} 
   A \emph{flag} is a list of subspaces $\Lambda=\left(W_1,\dots,W_m\right)$ of $\mathbb{F}_q^v$ with 
   $$
     \{0\}<W_1<\dots<W_m<\mathbb{F}_q^v.
   $$
   The \emph{type} of $\Lambda=\left(W_1,\dots,W_m\right)$ is the set of dimensions
  $$
    \operatorname{type}(\Lambda):=\left\{\dim(W_i)\mid 1\le i\le m\right\}\subseteq\left\{1,\dots ,\highlight{v-1}\right\}.
  $$
  Let 
  $$
    \mathcal{F}(v,q):=\left\{\Lambda\mid \Lambda\text{ is a flag in }\mathbb{F}_q^v\right\}
  $$
  denote the set of all flags in $\mathbb{F}_q^v$ and for $T\subseteq\{1,\dots,v-1\}$ let 
  $$
    \mathcal{F}_T(v,q):=\left\{\Lambda\in\mathcal{F}(v,q)\mid \operatorname{tpye}(\Lambda)=T\right\}
  $$
  be the set of all flags of $\mathbb{F}_q^v$ of type $T$\highlight{.}
\end{Definition} 

As noted in \cite{with_flags}, the intersection of two flags is again a flag \highlight{and the} set of all flags in $\mathbb{F}_q^v$ forms 
a simplicial complex (with respect to inclusion). There the authors give all relevant facts about the spherical building of the general 
linear group of a finite dimensional vector space. Here we will not use the language of buildings. If a flag in $\mathbb{F}_q^v$ has type $\{1,\dots,v-1\}$, then we speak of a 
\emph{full flag} whose set is \highlight{denoted} by $\mathcal{F}_f(q)$. Full flags are the maximal simplices while the unique minimal flag is the empty 
set with type $\emptyset$. The second minimal flags $\{W\}$ are the proper subspaces $W$ of $\mathbb{F}_q$. So, the Grassmannian of all $k$-dimensional 
subspaces, i.e., $\gauss{v}{k}{q}$, is in bijection with the set of flags $\mathcal{F}_{\{k\}}(q)$ of type $\{k\}$. 

\begin{Definition}
  Let $\Lambda=\left(W_1,\dots,W_m\right)$ and $\Lambda'\highlight{=}\left(W_1',\dots,W_m'\right)$ be two flags of $\mathbb{F}_q^v$ of the 
  same type $T=\left\{k_1,\dots,k_m\right\}$ with $k_i=\dim(W_i)=\dim(W_i')$ for all $1\le i\le m$. Then, the \emph{Grassmann distance} is defined as
  $$
    \fdist(\Lambda,\Lambda'):=\sum_{i=1}^m \idist(W_i,W_i')=\sum_{i=1}^m \left(k_i-\dim(W_i\cap W_i')\right).
  $$
\end{Definition}

So, for $m=1$ the Grassmann distance corresponds to the injection distance, i.e., half the subspace distance, between $W_1$ and $W_1'$. 
For $U,W\in \gauss{v}{k}{q}$ we have $0\le \idist(U,W)\le \min\{k,v-k\}$, so that we set 
$$
  m(v,T)=\left(\min\{k_1,v-k_1\},\dots,\min\{k_m,v-k_m\}\right),
$$ 
where $T=\left\{k_1,\dots,k_m\right\}\subseteq\{1,\dots,v-1\}$ with $k_1<\dots<k_m$. If $T=\{1,\dots,v-1\}$ we just write $m(v)$ instead of 
$m(v,T)$. By $x_i$ we denote the $i$th component for each vector $x\in\mathbb{R}^n$. With this we can state
$$
  \fdist(\Lambda,\Lambda')\le \sum_i m(v,T)_i
$$ 
for all $\Lambda,\Lambda'\in\mathcal{F}_T(v,q)$. As mentioned in 
\cite[Remark 4.5]{with_flags} we have $1\le \fdist(\Lambda,\Lambda')\le \left\lfloor (v/2)^2\right\rfloor$ for two distinct flags in $\mathbb{F}_q^v$. A 
\emph{flag code} $\mathcal{C}$ of type $T$ is a collection of flags in $\mathbb{F}_q^v$ of type $T$. If $\#\mathcal{C}\ge 2$, then the minimum distance 
$\fdist(\mathcal{C})$ is the minimum of $\fdist(\Lambda,\Lambda')$ over all pairs of distinct elements $\Lambda,\Lambda'\in\mathcal{C}$. For $\#\mathcal{C}<2$ 
we set $\fdist(\mathcal{C})=\infty$. By $A_q^f(v,d;T)$ we denote the maximum possible cardinality of a flag code $\mathcal{C}$ of type $T$ in $\mathbb{F}_q^v$ 
that has minimum distance at least $d$. The case of full flags, i.e.\ $T=\{1,\dots,v-1\}$, is abbreviated as $A_q^f(v,d)$. Technically, we set 
$A_q^f(v,d)=1$ if $d>\left\lfloor(v/2)^2\right\rfloor$ and restrict ourselves to $1\le d\le \left\lfloor(v/2)^2\right\rfloor$ in the following. The \emph{dual} 
of a flag $\Lambda=\left(W_1,\dots,W_m\right)$ in $\mathbb{F}_q^v$ of type $T\subseteq \{1,\dots,v-1\}$, denoted by $\Lambda^\top$, is given by 
$\left(W_{\highlight{m}}^\top,\dots,W_{\highlight{1}}^\top\right)$. Since we have $\idist(U,W)=\idist\!\left(U^\top,W^\top\right)$ for each $U,W\in\gauss{v}{k}{q}$, for some arbitrary 
integer $k$, the minimum Grassmann distance $d(\mathcal{C})$ of a flag code of type $T$ in $\mathbb{F}_q^v$ is the same as $d\!\left(\mathcal{C}^\top\right)$, 
where $\mathcal{C}^\top:=\left\{\Lambda^\top\mid\Lambda\in\mathcal{C}\right\}$. Moreover, we have
$$
  \operatorname{type}\!\left(\mathcal{C}^\top\right)=\left\{v-t\mid t\in\operatorname{type}(\mathcal{C})\right\}=:T^\top,
$$
so that $A_q^f(v,d;T)=A_q^f\!\left(v,d;T^\top\right)$. The aim of this paper is to derive bounds on $A_q^f(v,d;T)$ and mostly on $A_q^f(v,d)$.

The arguably easiest case for the determination of $A_q^f(v,d;T)$ is minimum distance $d=1$, where $A_q^f(v,\highlight{1};T)=\#\mathcal{F}_T(v,q)$. If $T=\left\{k_1,\dots,k_m\right\}$ 
with $0<k_1<\dots<k_m<v$, then we have
\begin{equation}
  \label{eq_count_flags}
  A_q^f(v,1;T)=\gaussm{v}{k_1}{q}\cdot \prod_{i=2}^m \gaussm{v-k_{i-1}}{k_i-k_{i-1}}{q} 
\end{equation}
and
\begin{equation}
  A_q^f(v,1)=\prod_{i=2}^{v} \frac{q^i-1}{q-1}. 
\end{equation}

For the maximum possible minimum distance $d=\left\lfloor(v/2)^2\right\rfloor$ we have:
\begin{Proposition}
  \label{prop_a_max_distance}
  For each integer $k\ge 1$ we have
  $$
    A_q^f(2k,k^2)=q^k+1  
  $$
  and for each integer $k\ge 2$ we have
  $$
    A_q^f(2k+1,k^2+k)=q^{k+1}+1.  
  $$
\end{Proposition}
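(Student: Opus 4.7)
\emph{Proof plan.} My approach is to reduce both claims to bounds on partial $k$-spreads. For a full flag code $\mathcal{C}$ realizing the maximum Grassmann distance $\lfloor(v/2)^2\rfloor$ — which equals $\sum_{i=1}^{v-1}\min\{i,v-i\}=k^2$ for $v=2k$ and $k^2+k$ for $v=2k+1$ — the inequalities $\idist(W_i^\Lambda,W_i^{\Lambda'})\le\min\{i,v-i\}$ together with $\fdist$ being their sum force every componentwise inequality to be tight for every pair of distinct $\Lambda,\Lambda'\in\mathcal{C}$. Specializing to $i=k$ and applying Lemma~\ref{lemma_intersection_distance} yields $W_k^\Lambda\cap W_k^{\Lambda'}=\{0\}$, so the $k$-subspaces of the flags in $\mathcal{C}$ form a partial $k$-spread in $\mathbb{F}_q^v$.

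For the upper bound, when $v=2k$ the covering bound immediately gives $|\mathcal{C}|\le(q^{2k}-1)/(q^k-1)=q^k+1$, attained by any $k$-spread. When $v=2k+1$ with $k\ge 2$ the covering bound is not tight (it only yields $q^{k+1}+q$), so I would invoke the classical result that the maximum partial $k$-spread in $\mathbb{F}_q^{2k+1}$ has size $q^{k+1}+1$. This is the step I expect to lean most heavily on cited results, as the sharp odd-dimensional partial-spread bound is not elementary. For the lower bound when $v=2k$, take any $k$-spread $\mathcal{S}$ of $\mathbb{F}_q^{2k}$ and extend each $S\in\mathcal{S}$ arbitrarily to a full flag with $W_k=S$; the distance conditions then hold automatically because $W_i^{\Lambda_S}\cap W_i^{\Lambda_{S'}}\subseteq S\cap S'=\{0\}$ for $i\le k$ and $W_i^{\Lambda_S}+W_i^{\Lambda_{S'}}\supseteq S+S'=\mathbb{F}_q^{2k}$ for $i\ge k$.

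For the lower bound when $v=2k+1$ the only non-trivial task is choosing the $(k+1)$-th subspaces, as everything below $W_k$ and above $W_{k+1}$ extends freely. My plan is to embed $H:=\mathbb{F}_q^{2k+1}$ as a hyperplane of $V:=\mathbb{F}_q^{2k+2}$, fix a $(k+1)$-spread $\mathcal{T}$ of $V$ (of size $q^{k+1}+1$), observe by a dimension count that exactly one element $T_\bullet\in\mathcal{T}$ is contained in $H$ while the others meet $H$ in a $k$-subspace, and pick some $v_0\in V\setminus H$, which lies in a unique spread element $T_\circ\ne T_\bullet$. Letting $\pi\colon V\to H$ be the projection along $\langle v_0\rangle$ (so $\pi|_T$ is injective whenever $v_0\notin T$), I would set $W_{k+1}^T:=T_\bullet$ for $T=T_\bullet$, $W_{k+1}^T:=(T_\circ\cap H)+\langle w\rangle$ for an arbitrary $w\in H\setminus(T_\circ\cap H)$ when $T=T_\circ$, and $W_{k+1}^T:=\pi(T)$ otherwise. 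Each $W_{k+1}^T$ then has dimension $k+1$ and sits in $H$, and the key identity $W_{k+1}^T+W_{k+1}^{T'}=H$ for $T\ne T'$ follows from $\pi(T+T')=\pi(V)=H$ in the generic case and, for pairs involving $T_\circ$, from the spread identity $T\cap T_\circ=\{0\}$ together with $v_0\notin H$ forcing $(T_\circ\cap H)\cap\pi(T)=\{0\}$ (a short chase through the definition of $\pi$). Setting $W_k^T:=T\cap H$ for $T\ne T_\bullet$ and any $k$-subspace of $T_\bullet$ for $T=T_\bullet$, and then filling in the remaining subspaces arbitrarily, yields $q^{k+1}+1$ full flags at Grassmann distance $k^2+k$.
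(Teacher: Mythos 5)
Your upper-bound argument and your $v=2k$ construction follow the same lines as the paper: each pairwise component inequality $\idist(W_i,W_i')\le\min\{i,v-i\}$ must be tight at maximum Grassmann distance, so the middle layer is a partial $k$-spread, and a $k$-spread of $\mathbb{F}_q^{2k}$ lifts freely to a code of size $q^k+1$. The genuine divergence is the lower bound for $v=2k+1$, and here your route is not only different from the paper's but, as far as I can tell, actually closes a gap. The paper takes an \emph{arbitrary} maximal partial $k$-spread $\mathcal{C}_k$ in $\mathbb{F}_q^{2k+1}$, fixes a single hole $P$, sets $W_{k+1}=\langle W_k,P\rangle$ for every spread element, and asserts that $\idist(W_i,W_i')=\min\{i,v-i\}$ is ``easily checked.'' However, $\idist(W_{k+1},W_{k+1}')=k$ requires $\dim(W_{k+1}\cap W_{k+1}')=1$, which is equivalent to $P\not\le W_k+W_k'$; this is strictly stronger than $P$ being uncovered, and it fails in small cases. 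For instance, for the Beutelspacher partial $2$-spread of $\mathbb{F}_2^5$ obtained by slicing a line spread of $\mathbb{F}_8^2\cong\mathbb{F}_2^6$ by a hyperplane $H$, one can compute that the sets $T_\bullet\cap(W_k+W_k')$ run over \emph{all} hyperplanes of the deficiency plane $T_\bullet$, so every hole lies in some $W_k+W_k'$ and the paper's recipe produces a pair of flags at Grassmann distance strictly less than $k^2+k$. Your construction sidesteps this: embedding $H$ in $\mathbb{F}_q^{2k+2}$, taking a $(k+1)$-spread $\mathcal{T}$, and setting $W_k^T=T\cap H$, $W_{k+1}^T=\pi(T)$ (with the two exceptional elements $T_\bullet\subseteq H$ and $T_\circ\ni v_0$ handled by hand) yields $W_k^T\cap W_k^{T'}=\{0\}$ from spread disjointness and $W_{k+1}^T+W_{k+1}^{T'}=\pi(T+T')=H$ directly, after which the lower and upper layers propagate by inclusion exactly as in the even case; I checked the two exceptional pairings and your claim $(T_\circ\cap H)\cap\pi(T')=\{0\}$ goes through. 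In effect you let the auxiliary point $P_T:=\pi(T)\cap T_\bullet$ vary with $T$ rather than holding it fixed, which is precisely the degree of freedom the paper's recipe is missing. Both approaches lean on the same nontrivial cited input $A_q^i(2k+1,k;k)=q^{k+1}+1$ for $k\ge 2$; your explicit awareness that this is the heavy ingredient, and that $k=1$ must be excluded, is correct.
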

\begin{proof}
  Let $\mathcal{C}$ be a full flag code in $\mathbb{F}_q^v$ with the maximum possible minimum distance $d=\left\lfloor(v/2)^2\right\rfloor$, where $v\ge 2$. 
  If $\Lambda=\left(W_1,\dots,W_{v-1}\right)$ and $\Lambda'=\left(W_1',\dots,W_{v-1}'\right)$ are two different elements of $\mathcal{C}$ with 
  $\dim(W_i)=\dim(W_i')=i$ for all $1\le i\le v-1$, then we have 
  $$
    i-\dim(W_i\cap W_i')=\min\{i,v-i\},
  $$ 
  i.e., $W_i$ and $W_i'$ have the maximum possible injection distance $\idist(W_i,W_i')$. So, we clearly have the upper bounds $A_q^f(2k,k^2)\le A_q^i(2k,k;k)=q^k+1$ 
  and $A_q^f(2k+1,k^2+k)\le A_q^i(2k+1,k;k)=q^{k+1}+1$ (using $k\ge 2$), where the maximum possible codes sizes for the injection distance are well known, 
  see e.g.\ \cite{beutelspacher1975partial} or \cite{heinlein2016tables}.
  
  For the construction let $\mathcal{C}_k$ be a set of $k$-spaces in $\mathbb{F}_q^{v}$, where $v=2k$, with minimum injection distance $\idist\!\left(\mathcal{C}_k\right)=k$ and 
  cardinality $A_q^i(2k,k;k)=q^k+1$, i.e., a $k$-spread in $\mathbb{F}_q^{2k}$. We extend each element $W_k\in\mathcal{C}_k$ to a full flag $\left(W_1,\dots,W_{v-1}\right)$ by choosing 
  $W_i\subset\neq W_{i+1}$ with $\dim(W_i)=i$ arbitrarily for $i=k-1,\dots,1$. Similarly, we choose $W_{i}\supsetneq W_{i-1}$ with $\dim(W_i)=i$ arbitrarily for $i=k+1,\dots,v-1$. 
  This gives a full flag code $\mathcal{C}$ in $\mathbb{F}_q^{2k}$ of cardinality $q^k+1$. Now let $\Lambda=\left(W_1,\dots,W_{v-1}\right)$ and $\Lambda'=\left(W_1',\dots,W_{v-1}'\right)$ 
  be two different elements of $\mathcal{C}$ with $\dim(W_i)=\dim(W_i')=i$ for all $1\le i\le v-1$. Since $\dim(W_k\cap W_k')=0$, we have $\dim(W_i\cap W_i')=0$ and 
  $i-\dim(W_i\cap W_i')=\min\{i,2k-i\}$ for all $1\le i\le k$. For $k\le i\le v-1$ we can easily check $\dim(W_i\cap W_i')=i-k$ and $i-\dim(W_i\cap W_i')=\min\{i,2k-i\}$. 
  Thus, $\mathcal{C}$ has the maximum possible Grassmann distance.
  
  For the ambient space $\mathbb{F}_q^{v}$, where $v=2k+1$, let $\mathcal{C}_k$ be a set of $k$-spaces in $\mathbb{F}_q^{2k+1}$ with minimum injection distance $\idist\!\left(\mathcal{C}_k\right)=k$ 
  and cardinality $A_q^i(2k+1,k;k)=q^{k+1}+1$, i.e., a partial $k$-spread of maximum possible size in $\mathbb{F}_q^{2k+1}$. Now let $P$ be a point in $\mathbb{F}_q^{2k+1}$, i.e., a 
  $1$-space, that is not contained in an element of $\mathcal{C}_k$. (Since $\gaussm{k}{1}{q}\cdot \left(q^{k+1}+1\right)<\gaussm{2k+1}{1}{q}$, such a point $P$ exists.) We extend 
  each element $W_k\in\mathcal{C}_k$ to a full flag $\left(W_1,\dots,W_{\highlight{v-1}}\right)$ by choosing $W_i\highlight{\subsetneq} W_{i+1}$ with $\dim(W_i)=i$ arbitrarily for $i=k-1,\dots,1$. 
  The $(k+1)$-space $W_{k+1}$ is defined by $W_{k+1}=\langle W_k,P\rangle$. Similarly as before, we choose $W_{i}\supsetneq W_{i-1}$ with $\dim(W_i)=i$ arbitrarily for $i=k+2,\dots,v-1$. This 
  gives a full flag code $\mathcal{C}$ in $\mathbb{F}_q^{2k+1}$ of cardinality $q^{k+1}+1$. Given two different elements $\Lambda=\left(W_1,\dots,W_{v-1}\right)$ and 
  $\Lambda'=\left(W_1',\dots,W_{v-1}'\right)$ of $\mathcal{C}$ with $\dim(W_i)=\dim(W_i')=i$ for all $1\le i\le v-1$, we can easily check $i-\dim(W_i\cap W_i')=\min\{i,v-i\}$, 
  i.e., $\mathcal{C}$ attains the maximum possible minimum Grassmann distance.     
\end{proof}

We remark that the case $v=2k$ of Proposition~\ref{prop_a_max_distance} was independently proven in \cite{alonso2020flag}, where the authors also give a decoding algorithm and 
further details.

 \begin{Proposition}
  \label{prop_a_3_2}
  $$A_q^f(3,2)=\gaussm{3}{1}{q}=q^2+q+1$$
\end{Proposition}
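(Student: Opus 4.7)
The plan is to establish matching upper and lower bounds of $q^2+q+1$, with the upper bound immediate from counting and the lower bound from a matching argument in the point--plane incidence graph of $PG(2,q)$.

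For the upper bound I would exploit that in $\mathbb{F}_q^3$ each component of the Grassmann distance is already capped at one: for $U,W\in\gauss{3}{1}{q}$ we have $\idist(U,W)\le\min\{1,2\}=1$, and likewise $\idist(U,W)\le 1$ for $U,W\in\gauss{3}{2}{q}$. Hence two distinct full flags $(W_1,W_2)$ and $(W_1',W_2')$ in $\mathbb{F}_q^3$ satisfy $\fdist\ge 2$ if and only if $W_1\ne W_1'$ \emph{and} $W_2\ne W_2'$. If $\mathcal{C}$ has minimum distance at least $2$, then the projection $\mathcal{C}\to\gauss{3}{1}{q}$ sending $(W_1,W_2)\mapsto W_1$ is injective, so $\#\mathcal{C}\le\gaussm{3}{1}{q}=q^2+q+1$.

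For the matching lower bound I would produce $q^2+q+1$ incident (point, plane)-pairs with pairwise distinct points and pairwise distinct planes. Consider the bipartite graph $G$ with parts $\gauss{3}{1}{q}$ and $\gauss{3}{2}{q}$ and an edge $\{P,H\}$ exactly when $P<H$. Both parts have size $\gaussm{3}{1}{q}=\gaussm{3}{2}{q}=q^2+q+1$, and each vertex has degree $\gaussm{2}{1}{q}=q+1$ (a point lies in $q+1$ planes, a plane contains $q+1$ points), so $G$ is a $(q+1)$-regular bipartite graph. By König's theorem (equivalently, Hall's marriage theorem) it admits a perfect matching $M$, and setting $\mathcal{C}:=\{(P,H)\mid\{P,H\}\in M\}$ yields a flag code of type $\{1,2\}$ and cardinality $q^2+q+1$ whose point parts are pairwise distinct and whose plane parts are pairwise distinct; by the observation above, $\fdist(\mathcal{C})\ge 2$.

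Neither step presents a real obstacle. The only conceptual point is recognising that the distance-$2$ condition decouples into ``distinct points'' plus ``distinct planes'', after which the matching reduces to a trivial invocation of regularity. If one prefers an explicit construction in place of König's theorem, one can identify $\mathbb{F}_q^3$ with $\mathbb{F}_{q^3}$, fix any incident pair $(P_0,H_0)$, and close it under the Singer cyclic action of $\mathbb{F}_{q^3}^*/\mathbb{F}_q^*$, which acts regularly on both points and planes and therefore produces the desired $q^2+q+1$ flags with all components distinct.
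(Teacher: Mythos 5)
Your proof is correct. The upper bound is the paper's argument: each coordinate of the Grassmann distance in $\mathbb{F}_q^3$ is at most $1$, so two codewords sharing a $1$-space are at distance at most $1$, and hence the codewords must have pairwise distinct $1$-spaces, giving $\#\mathcal{C}\le\gaussm{3}{1}{q}$. For the lower bound you take a genuinely different route. The paper directly takes the orbit of an incident point--hyperplane pair under a Singer group of order $q^2+q+1$, yielding an explicit code with a large cyclic automorphism group -- a pattern it re-uses for $v=4$ and in the later ILP computations. You instead recast minimum distance $2$ as requiring pairwise distinct points and pairwise distinct hyperplanes, that is, a matching in the $(q+1)$-regular bipartite point--hyperplane incidence graph, and invoke K\"onig (or Hall) to obtain a perfect matching of size $q^2+q+1$. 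The matching viewpoint is more elementary -- it needs only regularity of the incidence graph, not the field structure -- and cleanly isolates the combinatorial content of the bound, at the price of being non-explicit; you also sketch the Singer construction as a concrete alternative, which is essentially the paper's proof.
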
 
\begin{proof}
  Let $\mathcal{C}$ be a full flag code in $\mathbb{F}_q^3$ with minimum Grassmann distance $d=2$. Suppose there are two different elements $\Lambda=\left(W_1,W_2\right)$ 
  and $\Lambda'=\left(W_1',W_2'\right)$ in $\mathcal{C}$ with $W_1=W_1'$. Then, we have $\idist(W_1,W_1')=0$ and $\idist(W_2,W_2')\le 1$, so that 
  $\fdist(\Lambda,\Lambda')\le 1\highlight{<2}$. Thus, we have $\#\mathcal{C}\le \gaussm{3}{1}{q}=q^2+q+1$, \highlight{which is the number of choices for $W_1$}.
  
  For the lower bound we construct a matching code using the Singer group $\langle\sigma\rangle$ generated by a Singer cycle $\sigma$ of $\mathbb{F}_q^3$, i.e., 
  $\langle\sigma\rangle\le \operatorname{P\Gamma L}(3,q)$ is the cyclic group of order $\gaussm{3}{1}{q}=q^2+q+1$ that acts regularly on the set of points or hyperplanes, 
  \highlight{see e.g.\ \cite{drudge2002orbits}}.
  Now let $L$ be an arbitrary line in $\mathbb{F}_q^3$ and $P\le L$ and arbitrary point. With this we set $\Lambda:=\left(P,L\right)$ and 
  $\mathcal{C}=\Lambda^{\langle \sigma\rangle}:=\left\{\Lambda^g\mid g\in \langle \sigma\rangle\right\}$, where $\Lambda^g=\left(P^g,L^g\right)$ and $U^g$ denotes the 
  application of $g\in P\Gamma L(v,q)$ onto a subspace $U$ in $\mathbb{F}_q^v$. For two different group elements $g_1,g_2\in \langle \sigma\rangle$ we have 
  $\idist\!\left(P^{g_1},P^{g_2}\right)=1$ and $\idist\!\left(L^{g_1},L^{g_2}\right)=1$, so that $\fdist(\mathcal{C})=2$.
\end{proof}

\begin{Proposition}
  \label{prop_a_4_3}
  $$A_q^f(4,3)=\gaussm{4}{1}{q}=q^3+q^2+q+1$$
\end{Proposition}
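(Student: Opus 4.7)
The plan is to prove both bounds separately, with the upper bound following from a pigeonhole argument on the point component, and the lower bound following from a Singer-group orbit construction analogous to the proof of Proposition~\ref{prop_a_3_2}.

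For the upper bound $A_q^f(4,3) \le \gaussm{4}{1}{q}$, I would show that in any full flag code $\mathcal{C} \subseteq \mathcal{F}_f(q)$ in $\mathbb{F}_q^4$ with $\fdist(\mathcal{C}) \ge 3$, the projection $\Lambda = (W_1,W_2,W_3) \mapsto W_1$ is injective. Suppose two distinct flags $\Lambda, \Lambda'$ share $W_1 = W_1'$, so $\idist(W_1,W_1')=0$. Since $W_2, W_2'$ are $2$-spaces both containing the nonzero space $W_1$, we have $\dim(W_2 \cap W_2') \ge 1$, hence $\idist(W_2,W_2') \le 1$. Also $W_3, W_3'$ are hyperplanes in $\mathbb{F}_q^4$, so $\idist(W_3,W_3') \le \min\{3,1\} = 1$. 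Adding up, $\fdist(\Lambda,\Lambda') \le 0+1+1 = 2$, contradicting $\fdist(\mathcal{C}) \ge 3$. Thus the first components are all distinct and $\#\mathcal{C} \le \gaussm{4}{1}{q}$.

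For the lower bound I would mimic the construction in the proof of Proposition~\ref{prop_a_3_2}. Let $\langle \sigma \rangle \le \SymmetryGroup$ be a Singer cycle of order $\gaussm{4}{1}{q} = q^3+q^2+q+1$ acting regularly on points and on hyperplanes of $\mathbb{F}_q^4$. Pick any point $P$, any hyperplane $H \supseteq P$, and a line $L$ with $P \le L \le H$, and form the orbit $\mathcal{C} = \Lambda^{\langle \sigma \rangle}$ of $\Lambda = (P,L,H)$. Regularity on points already ensures $|\mathcal{C}| = q^3+q^2+q+1$. For any $g\in\langle\sigma\rangle\setminus\{1\}$ we automatically get $\idist(P,P^g) = \idist(H,H^g) = 1$, so it suffices to guarantee $L^g \ne L$, i.e.\ that $L$ has trivial stabilizer in $\langle \sigma \rangle$; then $\idist(L,L^g) \ge 1$ and $\fdist(\Lambda,\Lambda^g) \ge 3$.

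The crux is therefore exhibiting such a line $L$. Identifying $\mathbb{F}_q^4$ with $\mathbb{F}_{q^4}$ so that $\sigma$ acts as multiplication by a primitive element modulo $\mathbb{F}_q^\ast$, the stabilizer of a $2$-dimensional $\mathbb{F}_q$-subspace $L$ corresponds to $\{x\in\mathbb{F}_{q^4}^\ast : xL=L\}/\mathbb{F}_q^\ast$. A standard argument shows that if this subgroup is nontrivial, then $L$ is closed under multiplication by some $x\notin\mathbb{F}_q$; the subfield $\mathbb{F}_q[x]$ must then be $\mathbb{F}_{q^2}$ (since $\mathbb{F}_{q^4}$ would force $\dim L = 4$), so $L$ is an $\mathbb{F}_{q^2}$-line of $\mathbb{F}_{q^4}$, i.e.\ a member of the regular spread. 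Since the regular spread contains only $q^2+1$ of the $\gaussm{4}{2}{q} = (q^2+1)(q^2+q+1)$ lines, any $L$ outside this spread has trivial Singer stabilizer, and the construction goes through. I expect the subfield/stabilizer analysis of lines to be the main (minor) obstacle; everything else is routine.
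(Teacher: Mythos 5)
Your proposal is correct and follows the same overall strategy as the paper: an injectivity (pigeonhole) argument on the point component $W_1$ for the upper bound, and a Singer-cycle orbit construction for the lower bound. The one place you diverge is in justifying the key existence fact for the construction. The paper invokes \cite{drudge2002orbits} (and \cite{glynn1988set}) for the precise orbit structure of lines under a Singer group in $\mathbb{F}_q^4$, namely one short orbit of length $q^2+1$ (the regular line spread) and $q$ long orbits of length $q^3+q^2+q+1$, and then selects $L$ from a long orbit through $P$ and finally any $H \supseteq L$. You instead give a self-contained field-theoretic argument: identify $\mathbb{F}_q^4 \cong \mathbb{F}_{q^4}$, observe that a nontrivial Singer stabilizer forces $L$ to be an $\mathbb{F}_q[x]$-module for some $x \notin \mathbb{F}_q$, hence an $\mathbb{F}_{q^2}$-line in the regular spread, so any of the many lines outside that spread has trivial stabilizer. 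This is a nice elementary substitute for the citation and only needs the weaker statement (short-orbit lines lie in the spread) rather than the full orbit count. One small point you glide over: since you fix $P$ and $H$ first, you need at least one line $L$ with $P \le L \le H$ that avoids the regular spread; this holds because there are $q+1 \ge 3$ such lines and at most one of them (the unique spread line through $P$) can be in the spread, but it deserves a sentence. The paper's ordering (choose $L$ first, then any $H \supseteq L$) sidesteps this check entirely.
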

\begin{proof}
  Let $\mathcal{C}$ be a full flag code in $\mathbb{F}_q^4$ with minimum Grassmann distance $d=3$. Suppose there are two different elements $\Lambda=\left(W_1,W_2,W_3\right)$ 
  and $\Lambda'=\left(W_1',W_2',W_3'\right)$ in $\mathcal{C}$ with $W_1=W_1'$. Then, we have $\idist(W_1,W_1')=0$, $\idist(W_2,W_2')\le 1$, and $\idist(W_3,W_3')\le 1$, so that 
  $\fdist(\Lambda,\Lambda')\le 2\highlight{<3}$. Thus, we have $\#\mathcal{C}\le \gaussm{4}{1}{q}=q^3+q^2+q+1$, \highlight{which is the number of choices for $W_1$}.
  
  For the lower bound we construct a matching code using the Singer group $\langle\sigma\rangle$ generated by a Singer cycle $\sigma$ of $\mathbb{F}_q^4$, i.e., 
  $\langle\sigma\rangle\le \operatorname{P\Gamma L}(4,q)$ is the cyclic group of order $\gaussm{4}{1}{q}$ that acts regularly on the set of points or hyperplanes. 
  As shown in \cite{drudge2002orbits}, see also \cite{glynn1988set} for this special case, the action of a Singer group partitions the set of $\gaussm{4}{2}{q}=
  (q^2 + 1)\cdot (q^2 + q + 1)$ lines into orbits of size $q^2+1$ or $q^3+q^2+q+1$. More precisely, there exists exactly one orbit of length $q^2+1$, the geometric 
  line spread, and $q$ orbits of length $q^3+q^2+q+1$. Let $\mathcal{L}$ be an orbit of the latter and $L\in\mathcal{L}$ one of the $q+1$ elements that contain $P$ and 
  $H$ be an arbitrary hyperplane containing $L$. With this we set $\Lambda:=\left(P,L,H\right)$ and $\mathcal{C}=\Lambda^{\langle \sigma\rangle}:=\left\{\Lambda^g\mid g\in 
  \langle \sigma\rangle\right\}$, where $\Lambda^g=\left(P^g,L^g,H^g\right)$ and $U^g$ denotes the application of $g\in P\Gamma L(v,q)$ onto a subspace $U$ in $\mathbb{F}_q^v$. 
  For two different group elements $g_1,g_2\in \langle \sigma\rangle$ we have $\idist\!\left(P^{g_1},P^{g_2}\right)=1$, $\idist\!\left(L^{g_1},L^{g_2}\right)\ge 1$, and 
  $\idist\!\left(H^{g_1},H^{g_2}\right)=1$, so that $\fdist(\mathcal{C})\ge 3$.     
\end{proof}

Exemplarily we state an upper bound on the maximum cardinality of a full flag code for the next open case:
\begin{Proposition}
  \label{prop_a_4_2}
  $$A_q^f(4,2)\le\gaussm{4}{1}{q}\cdot \gaussm{3}{1}{q}=\left(q^3+q^2+q+1\right)\cdot \left(q^2 + q + 1\right)=q^5 + 2q^4 + 3q^3 + 3q^2 + 2q + 1$$
\end{Proposition}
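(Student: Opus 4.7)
The plan is to show that the map $\pi\colon \mathcal{C} \to \bigl\{(W_1,W_3) \mid W_1 \le W_3\bigr\}$ sending a full flag $\Lambda = (W_1,W_2,W_3)$ to the pair of its extreme members $(W_1,W_3)$ is injective on any full flag code $\mathcal{C}$ in $\mathbb{F}_q^4$ with minimum Grassmann distance at least $2$. Once injectivity is established, the bound follows by counting the codomain: there are $\gaussm{4}{1}{q}$ choices for the point $W_1$ and, for each such $W_1$, the hyperplanes through $W_1$ correspond bijectively to the $2$-spaces of $\mathbb{F}_q^4/W_1 \cong \mathbb{F}_q^3$, giving $\gaussm{3}{2}{q} = \gaussm{3}{1}{q}$ options.

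For the injectivity step I would argue as in Propositions~\ref{prop_a_3_2} and~\ref{prop_a_4_3}: suppose $\Lambda = (W_1,W_2,W_3)$ and $\Lambda' = (W_1,W_2',W_3)$ are two distinct flags of $\mathcal{C}$ with the same first and third component. Then $\idist(W_1,W_1) = \idist(W_3,W_3) = 0$, and both $W_2$ and $W_2'$ are $2$-spaces contained in the $3$-space $W_3$ and containing the $1$-space $W_1$. Hence $W_1 \le W_2 \cap W_2'$, so $\dim(W_2 \cap W_2') \ge 1$, which forces $\idist(W_2,W_2') \le 1$ by Lemma~\ref{lemma_intersection_distance}. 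This yields $\fdist(\Lambda,\Lambda') \le 1 < 2$, contradicting $\fdist(\mathcal{C}) \ge 2$.

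There is no real obstacle; the argument is the natural four-dimensional analogue of the lower-bound arguments in Propositions~\ref{prop_a_3_2} and~\ref{prop_a_4_3}, using the observation that when the two outer subspaces of the flag are fixed, the middle $2$-space is confined to the pencil of lines through $W_1$ inside $W_3$, and any two such lines are too close in the injection distance to compensate for the agreement on the endpoints.
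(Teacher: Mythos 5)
Your proof is correct, and the bound you arrive at is the same as the paper's, but the route is slightly different: the paper pins the \emph{first two} components, showing that for minimum Grassmann distance $2$ the map $(W_1,W_2,W_3)\mapsto(W_1,W_2)$ is injective and counting pairs $(W_1,W_2)$ with $W_1\le W_2$; you instead pin the \emph{outer} components, showing that $(W_1,W_2,W_3)\mapsto(W_1,W_3)$ is injective and counting incident point--hyperplane pairs. In the language the paper develops later in Section~\ref{sec_ILP}, the paper's proof corresponds to the reduction vector $r=(1,2,0)$ (a flag $U_1\le U_2$ with $\dim U_1=1$, $\dim U_2=2$), while yours corresponds to $r=(1,0,1)$ (a point together with a hyperplane). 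After Corollary~\ref{cor_clique} the paper explicitly notes that for $(v,d)=(4,2)$ the vectors $(1,0,1)$ and $(1,1,1)$ define the same clique cover and give $\#\mathcal{V}^{(1,0,1)}_{4,q}=\gaussm{4}{1}{q}\cdot\gaussm{3}{2}{q}=\gaussm{4}{1}{q}\cdot\gaussm{3}{1}{q}$, so the two bounds coincide here. What your variant highlights is that the middle space $W_2$ is squeezed from both sides (it must contain $W_1$ and lie in $W_3$), whereas the paper's version propagates forward from the fixed prefix; for $v=4$ these are indistinguishable by duality, but for larger $v$ different choices of $r$ can yield genuinely different bounds, which is exactly the refinement captured by Theorem~\ref{thm_anticode_bound}.
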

\begin{proof}
  Let $\mathcal{C}$ be a full flag code in $\mathbb{F}_q^4$ with minimum Grassmann distance $d=2$. Suppose there are two different elements $\Lambda=\left(W_1,W_2,W_3\right)$ 
  and $\Lambda'=\left(W_1',W_2',W_3'\right)$ in $\mathcal{C}$ with $W_1=W_1'$ and $W_2=W_2'$. Then, we have $\idist(W_1,W_1')=0$, $\idist(W_2,W_2')=0$, and $\idist(W_3,W_3')\le 1$, 
  so that $\fdist(\Lambda,\Lambda')\le 1\highlight{<2}$. Thus, we have $\#\mathcal{C}\le \gaussm{4}{1}{q}\cdot\gaussm{3}{1}{q}$, \highlight{which is the number of choices for 
  $\left(W_1,W_2\right)$. Note that there are $\gaussm{4}{1}{q}$ choices for $W_1$ and due to $W_1\le W_2$ there are $\gaussm{3}{1}{q}$ choices for $W_2$ when $W_1$ is fixed.} 
\end{proof}  
 

We remark that Proposition~\ref{prop_a_4_2} is tight for $q=2$, i.e., a corresponding code $\mathcal{C}$ of cardinality $105$ indeed exists. Such a code also exists if we prescribe 
a Singer cycle, i.e., a cyclic group of order $15$. Indeed, $15$ is the maximum possible order of the automorphism group (for $\#\mathcal{C}=105$). How to find such codes using 
integer linear programming, with or without prescribing automorphisms, is the topic of the next section.  
The underlying proof strategy of Proposition~\ref{prop_a_4_2} will be generalized in Section~\ref{sec_bounds}.

As usual in coding theory, the maximum cardinalities of codes can be lower and upper bounded by a canonical sphere covering and sphere packing bound, respectively. 
In the context of (full) flag codes the determination of the cardinalities of the \textit{spheres} is an open and non-trivial problem, see \cite{liebhold2018generalizing} 
for more details. Using the computational details on the sphere sizes determined in \cite{phd_liebhold} we determine the order of magnitude of the sphere packing and 
the sphere covering bound for $n\le 7$. In Table\ref{table_asymptotic_sphere_packing} we state exponents $e$ such that the sphere packing bound for $A_q^f(v,d)$ is 
$\Theta\!\left(q^e\right)$, i.e., we have lower and upper bounds for the sphere packing bound of the form $cq^e$ plus terms of lower order, where $c$ is a suitable constant.  
In Table~\ref{table_asymptotic_beta} we will summarize the exponents of the improved upper bounds obtained using the methods from this paper.  
The corresponding exponents for the sphere covering bound can be found in Table~\ref{table_asymptotic_sphere_covering}. \highlight{(For better comparison the two tables are 
located in Section~4.)}      

\section{An integer linear programming formulation for $A_q^f(v,d)$}
\label{sec_ILP}

In principle, it is rather simple to give an integer linear programming formulation for the exact determination of $A_q^f(v,d)$. Let us start with the formulation 
as a maximum independent set problem. To this end let $\mathcal{G}_{v,d,q}=(V,E)$ be a graph with vertex set $V=\mathcal{F}(v,q)$ and $\{\Lambda,\Lambda'\}\in E$ iff 
$\Lambda\neq\Lambda'$ and  $\fdist(\Lambda,\Lambda')<d$. Clearly, each flag code in $\mathbb{F}_q^v$ with minimum Grassmann distance $d$ is in bijection to an 
independent set in $\mathcal{G}_{v,d,q}$. A standard integer linear programming (ILP) formulation for the maximum cardinality of an independent set in a graph 
$(V,E)$ is given by $\max\sum_{u\in V} x_{\highlight{u}}$ subject to $x_u+x_w\le 1$ for all edges $\{u,w\}\in E$ and $x_u\in\{0,1\}$ for all $u\in V$. In our situation this 
gives:   
\begin{eqnarray}
  A_q^f(v,d) &=& \max \sum_{\Lambda\in\mathcal{F}(v,q)} x_\Lambda\quad\text{ s.t.}\label{ilp_target}\\
  x_\Lambda+x_{\Lambda'} &\le& 1\quad\quad\quad \forall \Lambda,\Lambda'\in \mathcal{F}(v,q)\text{ with }\Lambda\neq\Lambda', \fdist(\Lambda,\Lambda')<d\label{ilp_edge_constraint}\\
  x_\Lambda &\in &\{0,1\}\,\quad \forall\Lambda\in\mathcal{F}(v,q)\label{ilp_binary} 
\end{eqnarray}  
Note that the corresponding flag code is given by $\mathcal{C}=\left\{\Lambda\in\mathcal{F}(v,q)\mid x_\Lambda=1\right\}$ and that the formulation can be easily adopted for 
$A_q^f(v,d;T)$. The corresponding linear programming (LP) relaxation is obtained if the constraints from (\ref{ilp_binary}) are replaced by $0\le x_\Lambda\le 1$. Solving the 
LP relaxation, which is done by ILP solvers in intermediate steps, gives an upper bound. Since setting $x_\Lambda=\tfrac{1}{2}$ for all $\Lambda\in\mathcal{F}(v,q)$ always satisfies 
the constraints from (\ref{ilp_edge_constraint}), we cannot obtain an upper bound tighter than $\# \mathcal{F}(v,q)/2$ ($\# V/2$ in the general case), which is a rather bad bound 
(provided $d\ge 2$). However, for each subset $\mathcal{V}\subseteq V$ that induces a clique, i.e., $\{u,w\}$ is an edge for all pairs of different elements $u,w$ in $\mathcal{V}$, 
we can add the improved constraint $\sum_{u\in \mathcal{V}} x_u\le 1$, which is also called \emph{clique constraint}. \highlight{In many cases, adding such clique constraints results 
in a tighter LP upper bound.} So, the rest of this section is devoted to the description 
of large cliques in $\mathcal{G}_{v,d,q}$.   

For two vectors $x,y\in\mathbb{R}^n$ we write $x\le y$ iff $x_i\le y_i$ for all $1\le i\le n$. By $\mathbf{0}$ we denote the all zero vector whenever the length is clear from 
the context. 
We say that two subspaces $U,W$ of $\mathbb{F}_q^v$ are \emph{incident} if either $U\le W$ or $W\le U$, which we denote by $(U,W)\in I$.  

\begin{Lemma}
  \label{lemma_clique}
  Let $r\in\mathbb{N}^{v-1}$ with $\mathbf{0}\le r\le m(v)$, $\mathcal{I}=\left\{1\le i\le v-1\mid r_i\neq 0\right\}$, and $U_i$ an arbitrary subspace of $\mathbb{F}_q^v$ 
  with $\dim(U_i)\in\left\{i-\highlight{m(v)_i}+r_i,i+\highlight{m(v)_i}-r_i\right\}$ for each $i\in\mathcal{I}$. If $d> \sum_{i=1}^{v-1}\left(m(v)_i-r_i\right)$, then
  $$
    \mathcal{V}=\left\{ \left(W_1,\dots,W_{v-1}\right)\in\mathcal{F}(v,q) \mid \left(W_i,U_i\right)\in I\,\forall i\in\mathcal{I}  \right\}
  $$
  is the vertex set of a clique in $\mathcal{G}_{v,d,q}$.
\end{Lemma}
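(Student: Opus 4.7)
The plan is to show that any two distinct flags $\Lambda,\Lambda'\in\mathcal{V}$ lie at Grassmann distance strictly less than $d$, which by the definition of the edge set of $\mathcal{G}_{v,d,q}$ certifies that $\mathcal{V}$ spans a clique. I will work coordinate by coordinate, aiming to prove the componentwise inequality $\idist(W_i,W_i')\le m(v)_i-r_i$ for every index $i\in\{1,\dots,v-1\}$; summing then yields $\fdist(\Lambda,\Lambda')\le \sum_i(m(v)_i-r_i)<d$, which is precisely the condition defining adjacency in $\mathcal{G}_{v,d,q}$.

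For coordinates $i\notin\mathcal{I}$, i.e., $r_i=0$, the bound is immediate from the general inequality $\idist(W_i,W_i')\le m(v)_i$ between $i$-spaces, which coincides with $m(v)_i-r_i$. The substantive work is for $i\in\mathcal{I}$. Here the key observation is that the two permitted values of $\dim(U_i)$, namely $i-m(v)_i+r_i$ and $i+m(v)_i-r_i$, bracket $i$ from below and above. Since $\dim(W_i)=\dim(W_i')=i$, the incidence condition $(W_i,U_i)\in I$ forces $U_i\le W_i$ in the first (``small'') case and $W_i\le U_i$ in the second (``large'') case, and identically for $W_i'$. Consequently either $U_i\le W_i\cap W_i'$ or $W_i+W_i'\le U_i$, and in either situation Lemma~\ref{lemma_intersection_distance}, items (4) and (5) respectively, delivers exactly the desired bound $\idist(W_i,W_i')\le m(v)_i-r_i$.

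The main (mild) obstacle is verifying that the two incidence cases for $i\in\mathcal{I}$ are exhaustive and behave correctly at the boundary $r_i=m(v)_i$: there both candidate dimensions collapse to $i$, so the incidence condition forces $W_i=U_i=W_i'$ and the contribution to the Grassmann distance is zero, matching $m(v)_i-r_i=0$. Apart from this small bookkeeping, the argument is purely mechanical and amounts to assembling the per-coordinate bounds into the final summation.
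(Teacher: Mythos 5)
Your proposal is correct and follows essentially the same route as the paper's own proof: a coordinate-wise bound $\idist(W_i,W_i')\le m(v)_i-r_i$ (trivially for $i\notin\mathcal{I}$, and via the containment forced by the incidence and dimension comparison for $i\in\mathcal{I}$), then summing. The only cosmetic differences are that you invoke Lemma~\ref{lemma_intersection_distance}(4)/(5) where the paper computes the injection distances directly, and that you explicitly note the degenerate boundary case $r_i=m(v)_i$, which the paper leaves implicit.
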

\begin{proof}
  Let $\Lambda=\left(W_1,\dots,W_{v-1}\right)$ and $\Lambda'=\left(W_1',\dots,W_{v-1}'\right)$ be two different elements in \highlight{$\mathcal{V}$}. For $1\le i\le v-1$ with 
  $i\notin \mathcal{I}$ we have $\idist(W_i,W_i')\le m(v)_i=m(v)_i-r_i$. Now we consider $i\in \mathcal{I}$.
  If $\dim(U_i)=i-\highlight{m(v)_i}+r_i$, then $U_i\le W_i$ and $U_i\le W_i'$, so that
  $$
    \idist(W_i,W_i')=i-\dim(W_i\cap W_i')\le i-\dim(U_i)=m(v)_i-r_i.  
  $$  
  If $\dim(U_i)=i+\highlight{m(v)_i}-r_i$, then $W_i\le U_i$ and $ W_i'\le U_i$, so that
  $$
    \idist(W_i,W_i')=\dim(W_i+ W_i')-i\le \dim(U_i)-i=m(v)_i-r_i.  
  $$   
  Thus, we have
  $$
    \fdist(\Lambda,\Lambda') \le \sum_{i=1}^{v-1}\left(m(v)_i-r_i\right)<d,
  $$
  i.e. $\{\Lambda,\Lambda'\}$ is an edge in $\mathcal{G}_{v,d,q}$.
\end{proof}

\begin{Corollary}
  \label{cor_clique}
  Let $r\in\mathbb{N}^{v-1}$ with $\mathbf{0}\le r\le m(v)$, $\mathcal{I}=\left\{1\le i\le v-1\mid r_i\neq 0\right\}$, and $U_i$ an arbitrary $\left(i-\highlight{m(v)_i}+r_i\right)$-space in 
  $\mathbb{F}_q^v$ for each $i\in\mathcal{I}$. If $d> \sum_{i=1}^{v-1}\left(m(v)_i-r_i\right)$, then
  $$
    \mathcal{V}=\left\{ \left(W_1,\dots,W_{v-1}\right)\in\mathcal{F}(v,q) \mid U_i\le W_i\,\forall i\in\mathcal{I}  \right\}
  $$
  is the vertex set of a clique in $\mathcal{G}_{v,d,q}$.
\end{Corollary}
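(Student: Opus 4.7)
My plan is to obtain Corollary~\ref{cor_clique} as an immediate specialization of Lemma~\ref{lemma_clique}. In the lemma, each $U_i$ is allowed to have either of two dimensions, $i-m(v)_i+r_i$ or $i+m(v)_i-r_i$, and the incidence condition $(W_i,U_i)\in I$ encompasses both $U_i\le W_i$ and $W_i\le U_i$. I would simply fix the first (smaller) option $\dim(U_i)=i-m(v)_i+r_i$ uniformly for every $i\in\mathcal{I}$, which isolates the ``$U_i\le W_i$'' branch of the incidence relation.

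For the verification, the hypothesis $\mathbf{0}\le r\le m(v)$ yields $0\le \dim(U_i)=i-m(v)_i+r_i\le i=\dim(W_i)$, so if $U_i$ and $W_i$ are incident then necessarily $U_i\le W_i$; conversely, any such containment is one of the two forms of incidence permitted by the lemma. Consequently, the set $\mathcal{V}$ defined in the corollary coincides with the set $\mathcal{V}$ produced by Lemma~\ref{lemma_clique} for the same data $r$ and the chosen $U_i$. Because the distance hypothesis $d>\sum_{i=1}^{v-1}(m(v)_i-r_i)$ is identical in both statements, the clique conclusion transfers without any further argument.

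There is essentially no obstacle: the corollary is a strict weakening of the lemma, stated separately for convenience when producing clique constraints. The only minor subtlety worth flagging is the boundary case $r_i=m(v)_i$, where $\dim(U_i)=i$ forces $W_i=U_i$; this is still a legitimate instance of containment and contributes $0$ to the upper bound on the pairwise Grassmann distance, so it causes no trouble.
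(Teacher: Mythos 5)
Your argument is correct and is exactly the route the paper intends: the corollary is stated without proof immediately after Lemma~\ref{lemma_clique} as the specialization to the branch $\dim(U_i)=i-m(v)_i+r_i$, in which incidence $(W_i,U_i)\in I$ collapses to $U_i\le W_i$ because $\dim(U_i)\le\dim(W_i)$. Your remark on the boundary case $r_i=m(v)_i$ is a welcome bit of explicitness but changes nothing.
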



The vector $r$ describes the reduction of the achievable Grassmann distance with respect to the maximum possible Grassmann distance. Let us consider an example, for $(v,d)=(4,2)$ 
we have $m(v)=(1,2,1)$ and $r=(1,2,0)$ satisfies the conditions of Corollary~\ref{cor_clique}, i.e., each full flag code $\mathcal{C}$ in $\mathbb{F}_q^4$ with minimum distance 
$\fdist(\mathcal{C})=2$ satisfies $\#\left\{\left(W_1,W_2,W_3\right)\in\mathcal{C}\mid W_1=P,W_2=L\right\} \le 1$ for each pair $(P,L)\in\gauss{4}{1}{q}\times \gauss{4}{2}{q}$. 
Actually, this argument was used in the proof of Proposition~\ref{prop_a_4_2} to conclude the upper bound for $A_q^f(4,2)$.

In the other direction, \highlight{a} strengthening of Corollary~\ref{cor_clique} is sufficient to cover all edges of $\mathcal{G}_{v,d,q}$ by corresponding cliques 
with vertex set $\mathcal{V}$.

\begin{Lemma}
  \label{lemma_other_direction}
  If $\Lambda=\left(W_1,\dots,W_{v-1}\right)$ and $\Lambda'=\left(W_1',\dots,\highlight{W_{v-1}'}\right)$ are two different full flags with 
  $\fdist(\Lambda,\Lambda')<d$, then there exist subspaces $U_1\le \dots\le U_{v-1}$ such that $d> \sum_{i=1}^{v-1}\left(m(v)_i-r_i\right)$ and 
  $\mathbf{0}\le r\le m(v)$, where $r_i=\dim(U_i)-i+m(v)_i$ for all $1\le i\le v-1$.
\end{Lemma}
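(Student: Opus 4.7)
The natural move is to try to realize $\Lambda$ and $\Lambda'$ as two elements of the same clique $\mathcal{V}$ of Corollary~\ref{cor_clique} where the subspaces $U_i$ are as small (and hence as constrained) as possible. Since Corollary~\ref{cor_clique} asks for $U_i\le W_i$ and $U_i\le W_i'$, the canonical candidate for the smallest such witness is
\[
  U_i\,:=\,W_i\cap W_i'\qquad(1\le i\le v-1).
\]

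The plan is as follows. First I would observe that the chain property $U_1\le U_2\le\dots\le U_{v-1}$ is automatic: from $W_i\le W_{i+1}$ and $W_i'\le W_{i+1}'$ we get $W_i\cap W_i'\le W_{i+1}\cap W_{i+1}'$. Next, I would set $r_i:=\dim(U_i)-i+m(v)_i$ as in the statement, so that $\dim(U_i)=i-m(v)_i+r_i$ matches the dimensional format required by Corollary~\ref{cor_clique}.

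Then I would verify $\mathbf{0}\le r\le m(v)$ componentwise. The upper bound $r_i\le m(v)_i$ is equivalent to $\dim(W_i\cap W_i')\le i$, which is trivial because $W_i\cap W_i'\le W_i$ and $\dim(W_i)=i$. For the lower bound $r_i\ge 0$, I need $\dim(W_i\cap W_i')\ge i-m(v)_i$. When $i\le v/2$ we have $m(v)_i=i$ so this reads $\dim(W_i\cap W_i')\ge 0$; when $i>v/2$ we have $m(v)_i=v-i$ so it reads $\dim(W_i\cap W_i')\ge 2i-v$, which is exactly the lower bound coming from the Grassmann dimension formula $\dim(W_i\cap W_i')\ge \dim(W_i)+\dim(W_i')-v$.

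Finally, I would check the distance condition. Unfolding the definition of $r_i$ gives
\[
  \sum_{i=1}^{v-1}\bigl(m(v)_i-r_i\bigr)=\sum_{i=1}^{v-1}\bigl(i-\dim(W_i\cap W_i')\bigr)=\sum_{i=1}^{v-1}\idist(W_i,W_i')=\fdist(\Lambda,\Lambda')<d,
\]
so the required strict inequality $d>\sum_i(m(v)_i-r_i)$ holds by hypothesis. There is no real obstacle here; the only mild point worth stating explicitly is that the chain condition $U_1\le\cdots\le U_{v-1}$ is what makes the choice $U_i=W_i\cap W_i'$ compatible with a single ``clique witness'' in the sense of Corollary~\ref{cor_clique}, and that is precisely why intersecting the two flags component-wise is the right construction.
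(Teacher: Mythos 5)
Your proposal is correct and follows exactly the paper's own argument: set $U_i=W_i\cap W_i'$, note the chain property, unpack $r_i$ to see that $\sum_i(m(v)_i-r_i)=\fdist(\Lambda,\Lambda')<d$, and check $\mathbf{0}\le r\le m(v)$. The only difference is that you spell out the verification of $r_i\ge 0$ via the dimension formula, which the paper leaves implicit.
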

\begin{proof}
  We choose $U_i=W_i\cap W_i'$ for all $1\le i\le v-1$, so that $U_1\le \dots\le U_{v-1}$. By construction we have
  $$
    \idist(W_i,W_i')=i-\dim(W_i\cap W_i')=i-\dim(U_i)=m(v)_i-r_i, 
  $$
  so that $\mathbf{0}\le r\le m(v)$ and $d>\fdist(\Lambda,\Lambda')=\sum_{i=1}^{v-1} \left(m(v)_i-r_i\right)$.
\end{proof}
In other words, we can replace the constraints (\ref{ilp_edge_constraint}) by the clique constraints $\sum_{u\in \mathcal{V}} x_u\le 1$ 
for all cases that satisfy the conditions of Corollary~\ref{cor_clique}, where we additionally assume $U_1\le\dots\le U_{v-1}$. In order to ease the 
notation we focus on the cliques of Corollary~\ref{cor_clique} instead of the more general situation of Lemma~\ref{lemma_clique}. 

\begin{Definition}
  For an integer vector $\mathbf{0}\le r\le m(v)$ let $\mathcal{I}=\left\{1\le i\le v-1\mid r_i>0\right\}$ and let $\mathcal{V}^r_{v,q}$ denote the set of cliques 
  $$
    \mathcal{V}=\left\{ \left(W_1,\dots,W_{v-1}\right)\in\mathcal{F}(v,q) \mid U_i\le W_i\,\forall i\in\mathcal{I}  \right\},
  $$   
  where the $U_i$ are $(i-\highlight{m(v)_i}+r_i)$-spaces and we have $U_i\le U_{i'}$ for all $i,i'\in\mathcal{I}$ with $i\le i'$. By $E^r_{v,q}$ we denote the set 
  of edges $e=\{\Lambda,\Lambda'\}$, where $e\subseteq \mathcal{V}$ for at least one $\mathcal{V}\in \mathcal{V}^r_{v,q}$. 
\end{Definition}

If $\mathbf{0}\le r\le r'\le m(v)$, then we obviously have $E^r_{v,q}\supseteq E^{r'}_{v,q}$. So, given $d$, it is sufficient to consider all $\mathcal{V}^r_{v,q}$ 
where $\sum_{i=1}^{v-1}\left(m(v)_i-r_i\right)=d-1$. \highlight{Note that $E^r_{v,q}=\emptyset$ is possible, e.g.\ for $r=(0,0,0,4,1,0,0)$.} In our example $(v,d)=(4,2)$ it suffices to 
consider the vectors $(1,2,0)$, $(1,1,1)$, and $(0,2,1)$. However, for $r=(1,1,1)$ we have $U_1\le U_2\le U_3$ with $\dim(U_1)=\dim(U_2)=1$, i.e., $U_1=U_2$, and 
$\dim(U_3)=3$. If $\Lambda=\left(W_1,W_2,W_3\right)$ and $\Lambda'=\left(W_1',W_2',W_3'\right)$ are flags with $U_1\le W_1$ and $U_1\le W_1'$, then $\idist(W_2,W_2')\le 1$ since 
$U_1\le W_2\cap W_2'$. In other words, also $\mathcal{V}^{(1,0,1)}_{v,q}$ consists of vertex sets of cliques in $\mathcal{G}_{4,2,q}$ \highlight{that cover the same edges 
as $\mathcal{V}^{(1,1,1)}_{v,q}$, cf.\ Lemma~\ref{lemma_r_bar}. Intuitively we may say that for two flags $\Lambda=\left(W_1,\dots,W_{v-1}\right)$ and 
 $\Lambda'=\left(W_1',\dots,W_{v-1}'\right)$ a relatively large intersection of $W_i$ and $W_i'$ implies a relatively large intersection of $W_{i+1}$ and $W_{i+1}'$ and 
 vice versa. This idea is made more precise in the next definition and Lemma~\ref{lemma_r_bar}.}  

\begin{Definition}
  \label{def_r_bar}
  Let $\mathbf{0}\le r\le m(v)$ and $u_j=\max\{2j-v,0\}+r_j$ for all $1\le j\le v-1$. Then, let
  $$
    \overline{u}_j=\max \highlight{\Big\{} \{u_i\mid 1\le i\le j \} \cup\{u_i-2(i-j)\mid j<i<v\} \highlight{\Big\}}
  $$ 
  and $\overline{r}_j=\overline{u}_j-j+m(v)_j$ for all $1\le j\le v$. With this, we set $\overline{r}=\left(\overline{r}_1,\dots,\overline{r}_{v-1}\right)$.
\end{Definition}

For further usage we state two easy lemmas without proof.
\begin{Lemma}
  \label{lemma_intersection_propagation}
  Let $W_a,W_a'$ be $a$-spaces and $W_b,W_b'$ be $\highlight{b}$-spaces in $\mathbb{F}_q^v$ with $W_a<W_b$ and $W_a'<W_b'$. 
  Then, we have $\dim(W_b\cap W_b')\ge\dim(W_a\cap W_a')$ and $\dim(W_a\cap W_a')\ge \dim(W_b\cap W_b')-2(b-a)$.
\end{Lemma}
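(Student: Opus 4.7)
The statement has two inequalities, and I would handle them separately, both by exploiting the fact that intersection is monotone and sum is monotone with respect to subspace inclusion, together with the usual dimension formula $\dim(X\cap Y)+\dim(X+Y)=\dim(X)+\dim(Y)$.

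For the first inequality, my plan is to observe that $W_a\le W_b$ and $W_a'\le W_b'$ imply $W_a\cap W_a'\le W_b\cap W_b'$, so $\dim(W_a\cap W_a')\le\dim(W_b\cap W_b')$ is immediate. This step is essentially a one-liner and requires no computation.

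For the second inequality, I would dualize the argument and work with sums instead of intersections. From $W_a\le W_b$ and $W_a'\le W_b'$ one gets $W_a+W_a'\le W_b+W_b'$ and hence $\dim(W_a+W_a')\le\dim(W_b+W_b')$. Applying the dimension formula to both pairs and using $\dim(W_a)=\dim(W_a')=a$ and $\dim(W_b)=\dim(W_b')=b$ yields
\[
  \dim(W_a\cap W_a')=2a-\dim(W_a+W_a')\ge 2a-\dim(W_b+W_b')=2a-\bigl(2b-\dim(W_b\cap W_b')\bigr),
\]
which rearranges to the claimed bound $\dim(W_a\cap W_a')\ge\dim(W_b\cap W_b')-2(b-a)$.

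Neither step presents a real obstacle; the only thing to be careful about is that the proof uses the fact that $W_a$ and $W_a'$ have equal dimension $a$ and likewise for $W_b,W_b'$, which is guaranteed by the hypothesis. The whole argument is a double application of the dimension formula combined with the monotonicity of $\cap$ and $+$, so I would keep the write-up to just a few lines.
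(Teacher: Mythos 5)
Your proof is correct, and the paper itself omits the proof (it states this as one of "two easy lemmas without proof"), so there is nothing to compare against. Your argument via monotonicity of $\cap$ and $+$ under inclusion plus the dimension formula is the natural one and is exactly what the author presumably had in mind.
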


\begin{Lemma}
  \label{lemma_chain_decrease}
  Let $U_1\le\dots\le U_n$ be a weakly increasing chain of subspaces in $\mathbb{F}_q^v$ and $u=\left(u_1,\dots,u_n\right)\in\mathbb{N}^n$ 
  satisfy $u_1\le \dots \highlight{\le} u_n$. If $\dim(U_i)\ge u_i$ for all $1\le i\le n$, then there exists a weakly increasing chain $U'_1\le\dots\le U'_n$ 
  of subspaces in $\mathbb{F}_q^v$ with $U_i'\le U_i$ and $\dim(U_i')=u_i$ for all $1\le i\le n$. 
\end{Lemma}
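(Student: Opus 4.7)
The plan is to prove this by straightforward induction on $i$, constructing the chain $U_1' \le \dots \le U_n'$ one term at a time from the bottom upwards. The only tool required is the elementary fact that any subspace of a vector space $V$ of dimension $a$ can be extended to a subspace of $V$ of any desired dimension $b$ with $a \le b \le \dim(V)$ (pick a basis and add $b-a$ vectors from an extension to a basis of $V$).

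For the base case, since $\dim(U_1) \ge u_1$, just choose an arbitrary $u_1$-dimensional subspace $U_1' \le U_1$. For the inductive step, assume we have already constructed $U_1' \le \dots \le U_i'$ with $U_j' \le U_j$ and $\dim(U_j') = u_j$ for $1 \le j \le i$. Because $U_i' \le U_i \le U_{i+1}$, the subspace $U_i'$ sits inside $U_{i+1}$ and has dimension $u_i \le u_{i+1} \le \dim(U_{i+1})$. Applying the extension fact inside $U_{i+1}$, we can enlarge $U_i'$ to a subspace $U_{i+1}' \le U_{i+1}$ with $\dim(U_{i+1}') = u_{i+1}$ and $U_i' \le U_{i+1}'$. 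This preserves the chain condition and finishes the induction.

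There is no real obstacle here; the monotonicity hypotheses $u_1 \le \dots \le u_n$ and $U_1 \le \dots \le U_n$ are precisely what is needed so that the inductive extension step is always feasible (one never has to \emph{shrink} a previously chosen subspace). I would keep the proof very short, essentially a single sentence invoking the induction, since the only nontrivial input is the dimension inequality $u_i \le u_{i+1} \le \dim(U_{i+1})$ that guarantees the extension at each step.
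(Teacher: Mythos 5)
Your inductive construction is correct, and it is the natural argument: build $U_1'\le\dots\le U_n'$ from the bottom up, at each step extending $U_i'$ inside $U_{i+1}$ using $u_i\le u_{i+1}\le\dim(U_{i+1})$. The paper simply states this lemma ``without proof'' as an easy fact, so there is nothing to compare against, but your argument is exactly the kind of routine verification the author left to the reader.
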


\begin{Lemma}
  \label{lemma_r_bar}
  For $\mathbf{0}\le r\le m(v)$ we have $r\le \overline{r}\le m(v)$ and $E^r_{v,q}=E^{\overline{r}}_{v,q}$. 
\end{Lemma}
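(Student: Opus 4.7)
My plan is to prove the three assertions $r\le \overline{r}$, $\overline{r}\le m(v)$, and $E^r_{v,q}=E^{\overline{r}}_{v,q}$ separately, with the edge-set equality being the substantive part. The inequality $r_j\le \overline{r}_j$ is immediate: the index $i=j$ from the first set in Definition~\ref{def_r_bar} contributes $u_j$ to the maximum, so $\overline{u}_j\ge u_j$. For $\overline{r}_j\le m(v)_j$ it suffices to verify $\overline{u}_j\le j$; this uses the coarse bound $u_i=\max\{2i-v,0\}+r_i\le \max\{2i-v,0\}+m(v)_i=i$, from which $u_i\le j$ for $i\le j$ and $u_i-2(i-j)\le 2j-i<j$ for $i>j$.

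One inclusion of the edge-set equality, namely $E^{\overline{r}}_{v,q}\subseteq E^r_{v,q}$, is the monotonicity remark made just before Definition~\ref{def_r_bar} applied to $r\le \overline{r}$. For the reverse inclusion, I would fix an edge $\{\Lambda,\Lambda'\}\in E^r_{v,q}$ realized by a clique in $\mathcal{V}^r_{v,q}$ coming from subspaces $U_i$ of dimension $u_i$ with $U_i\le W_i\cap W_i'$ for $i\in\mathcal{I}(r)$. Applying Lemma~\ref{lemma_intersection_propagation} along the chains $W_i\le W_j$ and $W_i'\le W_j'$ propagates the bound $\dim(W_i\cap W_i')\ge u_i$ to $\dim(W_j\cap W_j')\ge u_i$ for $j\ge i$ and to $\dim(W_j\cap W_j')\ge u_i-2(i-j)$ for $j<i$. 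The indices $i\notin \mathcal{I}(r)$ only contribute the trivial bounds $\max\{2i-v,0\}$, which are absorbed by the elementary estimate $\dim(W_j\cap W_j')\ge \max\{2j-v,0\}$, so taking the maximum over all $i$ yields $\dim(W_j\cap W_j')\ge \overline{u}_j$ for every $j$.

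To finish, I want an ascending chain $\overline{U}_1\le\dots\le \overline{U}_{v-1}$ with $\dim(\overline{U}_j)=\overline{u}_j$ and $\overline{U}_j\le W_j\cap W_j'$; this exhibits a clique in $\mathcal{V}^{\overline{r}}_{v,q}$ containing both $\Lambda$ and $\Lambda'$ and puts the edge into $E^{\overline{r}}_{v,q}$. This is exactly the setting of Lemma~\ref{lemma_chain_decrease} applied to the already ascending chain $U_j:=W_j\cap W_j'$, provided that the target sequence $(\overline{u}_1,\dots,\overline{u}_{v-1})$ itself is weakly increasing. Verifying this monotonicity is the main bookkeeping obstacle I expect, and I would do it by comparing the two maxima term by term: each $i\le j$ contributes the same entry $u_i$ to both $\overline{u}_j$ and $\overline{u}_{j+1}$; the index $i=j+1$ contributes $u_{j+1}-2$ to $\overline{u}_j$ but $u_{j+1}$ to $\overline{u}_{j+1}$; and each $i>j+1$ contributes $u_i-2(i-j)$ to $\overline{u}_j$ and $u_i-2(i-j-1)$ to $\overline{u}_{j+1}$, which is larger by $2$. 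Every summand in the max defining $\overline{u}_j$ is therefore dominated by a summand of the max defining $\overline{u}_{j+1}$, so $\overline{u}_j\le \overline{u}_{j+1}$ and Lemma~\ref{lemma_chain_decrease} completes the argument.
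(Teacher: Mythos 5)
Your proof is correct and follows essentially the same route as the paper: the upper direction $E^{\overline{r}}_{v,q}\subseteq E^{r}_{v,q}$ from $r\le\overline r$ and monotonicity, and the reverse inclusion by taking $\bar U_j=W_j\cap W_j'$, propagating lower bounds via Lemma~\ref{lemma_intersection_propagation}, and invoking Lemma~\ref{lemma_chain_decrease}. You go one step further than the paper in a useful way: the paper simply asserts $\bar u_1\le\dots\le \bar u_{v-1}$ \emph{by construction}, whereas you actually verify this monotonicity by comparing the terms in the two maxima, which is exactly the bookkeeping needed for Lemma~\ref{lemma_chain_decrease} to be applicable.
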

\begin{proof}
  By construction we have $u_j=j-m(v)_j+r_j$ for $1\le j\le v-1$, since $j-m(v)_j=j-\min\{j,v-j\}=\max\{2j-v,0\}$. Setting $u=\left(u_1,\dots,u_{v-1}\right)$ 
  and $\overline{u}=\left(\overline{u}_1,\dots,\overline{u}_{v-1}\right)$, we note $u\le \overline{u}\le(1,\dots,v-1)$, so that 
  $r\le \overline{r}\le m(v)$ due to $\overline{r}_j=\overline{u}_j-j+m(v)_j$ for all $1\le j\le v$. From $r\le \overline{r}\le m(v)$ we conclude 
  $E^r_{v,q}\supseteq E^{\overline{r}}_{v,q}$. 
  
  Now let $\{\Lambda,\Lambda'\}\in E^r_{v,q}$, where $\Lambda=\left(W_1,\dots,W_{v-1}\right)$ and $\Lambda'=\left(W_1',\dots,\highlight{W_{v-1}'}\right)$. 
  We set $\mathcal{I}=\{1\le i\le v-1\mid r_i>0\}$ and note that the definition of $E^r_{v,q}$ yields the existence of an $u_i$-space $U_i$ in 
  $\mathbb{F}_q^v$ with $U_i\le W_i\cap W_i'$ for all $i\in \mathcal{I}$ and $U_i\le U_{i'}$ for all $i,i'\in\mathcal{I}$ with $i\le i'$. 
  Now we set $\bar U_j=W_j\cap W_j'$ for $j=1,\dots,v-1$. First we note $\dim(\bar U_j)\ge u_j$ for all $1\le j\le v-1$ and $\bar U_1\le\dots \highlight{\le}\bar U_{v-1}$. 
  Now let $1\le j\le v-1$ be fix but arbitrary. We want to show $\dim(\bar U_j)\ge \overline{u}_j$. If $\overline{u}_j=u_j$ this 
  is clearly the case. If $\overline{u}_j=u_h$ for an index $1\le h<j$, then we can choose $b=j$, $a=h$  in Lemma~\ref{lemma_intersection_propagation} 
  to conclude $\dim(\bar U_j)=\dim(W_j\cap W_j')\ge \dim(W_h\cap W_h')\ge u_h=\overline{u}_j$. If $\overline{u}_j=u_h-2(h-j)$ for an index $j<h<v$, then  
  we can choose $b=h$, $a=j$ in Lemma~\ref{lemma_intersection_propagation} to conclude $\dim(\bar U_j)=\dim(W_j\cap W_j')\ge \dim(W_h,W_h')-2(h-j)\ge u_h-2(h-j)=\overline{u}_j$. 
  Since $\bar{u}_1\le \dots\highlight{\le}\bar{u}_{v-1}$ by construction, we can apply Lemma~\ref{lemma_chain_decrease} to conclude the existence of 
  \highlight{subspaces} $U'_1\le\dots\le U'_{v-1}$ in $\mathbb{F}_q^v$ with $U'_j\le W_j\cap W_j'$ and $\dim(U'_j)=\bar{u}_j$ for all $1\le j\le v-1$. 
  Due to the definition of $\bar{r}$ this yields that $\{\Lambda,\Lambda'\}\in E^{\overline{r}}_{v,q}$. Since $\{\Lambda,\Lambda'\}\in E^r_{v,q}$ 
  was arbitrary, this gives $E^r_{v,q}\subseteq E^{\overline{r}}_{v,q}$, so that $E^r_{v,q}= E^{\overline{r}}_{v,q}$.     
\end{proof}

As an example we have $\overline{(1,0,1)}=(1,1,1)$, so that $E^{(1,1,1)}_{4,q}=E^{(1,0,1)}_{4,q}$. Here we have 
$\#\mathcal{V}=\gaussm{3}{1}{q}$ for each $\mathcal{V}\in \mathcal{V}^{(1,0,1)}_{4,q}$ and also $\#\mathcal{V}=\gaussm{3}{1}{q}$ 
for each $\mathcal{V}\in \mathcal{V}^{(1,1,1)}_{4,q}$. Moreover, $\# \mathcal{V}^{(1,0,1)}_{4,q}=\gaussm{4}{1}{q}\cdot \gaussm{3}{2}{q}=
\gaussm{4}{1}{q}\cdot \gaussm{3}{1}{q}=\#\mathcal{V}^{(1,1,1)}_{4,q}$. In other words, here, there is no difference at all between taking 
$\mathcal{V}^{(1,0,1)}_{4,q}$ or $\mathcal{V}^{(1,1,1)}_{4,q}$. However, for $v\ge 5$ improvements are possible, \highlight{in the sense that larger cliques give 
{\lq\lq}tighter{\rq\rq} (I)LP formulations that eventually decrease running times of the ILP solver. From the theoretical point of view we can state (without proof) that 
the bound of Theorem~\ref{thm_anticode_bound} applied to $r$ is at least as good as the bound applied to $\overline{r}$, which occurs 
in the required relation of the vector $r$ and the minimum distance $d$.} In general, we have $\#\mathcal{V}^r_{v,q}\le \#\mathcal{V}^{\overline{r}}_{v,q}$. 

\begin{Definition}
  For $a,b\in \left\{r\in\mathbb{N}^{v-1}\mid \mathbf{0}\le r\le m(v)\right\}$ we define $a\preceq b$ if either $\bar{a}<\bar{b}$ or 
  $\bar{a}=\bar{b}\,\highlight{\wedge}\,a\le b$. 
\end{Definition}

The conditions of a poset, i.e., reflexivity, antisymmetry, and transitivity, are directly verified. So each subset $\mathcal{R}\subseteq 
\left\{r\in\mathbb{N}^{v-1}\mid \mathbf{0}\le r\le m(v)\right\}$ contains a unique subset $\mathcal{R}'\subseteq\mathcal{R}$ of minimal elements, i.e., 
for each $r\in \mathcal{R}$ there exists an element $r'\in\mathcal{R}'$ with $r'\preceq r$ and there are no two different elements $r',r''\in\mathcal{R}'$ 
with $r'\preceq r''$. Moreover, $r\le r'$ implies $\bar{r}\le \bar{r'}$, so that $r\preceq r'$. However, the converse is not true as we will see in  
Example~\ref{ex_R_v_d}. More precisely, we have $(0,1,1,0)\preceq (1,0,1,0)$ while $(0,1,1,0)$ and $(1,0,1,0)$ are incomparable with respect to $\le$. 
(It is also easy to show that $\bar{\bar{r}}=\bar{r}$.)

\begin{Definition}
  Let $\mathcal{R}_{v,d}$ \highlight{be} the unique set of, with respect to $\preceq$, minimal elements in the set of vectors
  $$
    \left\{r\in\mathbb{N}^{v-1}\mid \mathbf{0}\le r\le m(v),\, d>\sum_{i=1}^{v-1}\left(m(v)_i-\bar{r}_i\right)\right\}.
  $$  
\end{Definition}
Note that $r\in \mathcal{R}_{v,d}$ implies $\sum_{i=1}^{v-1}\left(m(v)_i-r_i\right)<d$ and $\left(r_1,\dots,r_{v-1}\right)\in\mathcal{R}_{v,d}$ 
if and only if $\left(r_{v-1},\dots,r_1\right)\in\mathcal{R}_{v,d}$. 

\begin{Example}
  \label{ex_R_v_d}
  For $v=d=5$ the vectors in $\left\{r\in\mathbb{N}^{v-1}\mid \mathbf{0}\le r\le m(v),\, d-1=\sum_{i=1}^{v-1}\left(m(v)_i-r_i\right)\right\}$ 
  are given by $(0,2,0,0)$, $(0,0,2,0)$, $(1,1,0,0)$, $(1,0,1,0)$, $(1,0,0,1)$, $(0,1,1,0)$, $(0,1,0,1)$, and $(0,0,1,1)$. We remark that
  $\overline{(1,0,0,0)}=(1,1,0,0)$, $\overline{(0,1,0,0)}=(0,1,0,0)$, $\overline{(1,1,0,0)}=(1,1,0,0)$, $\overline{(1,0,1,0)}=(1,1,1,0)$, $\overline{(1,0,0,1)}=(1,1,1,1)$, 
  $\overline{(0,1,1,0)}=(0,1,1,0)$, and $\overline{(0,2,0,0)}=(0,2,1,0)$. Since $\overline{(1,0,1,0)}=(1,1,1,0)> (0,1,1,0)=\overline{(0,1,1,0)}$, we e.g.\ have 
  $(1,0,1,0)\notin\mathcal{R}_{5,5}$. Similarly we have $(0,2,0,0)\notin\mathcal{R}_{5,5}$ since $\overline{(0,2,0,0)}=(0,2,1,0)> (0,1,1,0)=\overline{(0,1,1,0)}$. 
  \highlight{After performing all pairwise comparisons we end up with
  $$
    \mathcal{R}_{5,5}=\Big\{(1,0,0,0),(0,1,1,0),(0,0,0,1)\Big\}.
  $$} 
\end{Example}

\begin{Proposition}
\label{prop_ILP}
\begin{eqnarray}
  A_q^f(v,d) &=& \max \sum_{\Lambda\in\mathcal{F}(v,q)} x_\Lambda\quad\text{ s.t.}\label{ilp_target_impr}\\
  \sum_{\Lambda\in\mathcal{V}} x_{\Lambda} &\le& 1\quad\quad\quad \forall \mathcal{V}\in \mathcal{V}^r_{v,q}\, \forall r\in \mathcal{R}_{v,d}\label{ilp_clique_constraint}\\
  x_\Lambda &\in &\{0,1\}\,\quad \forall\Lambda\in\mathcal{F}(v,q)\label{ilp_binary_impr} 
\end{eqnarray}  
\end{Proposition}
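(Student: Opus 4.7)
The plan is to prove the proposition by exhibiting a value-preserving bijection between $0/1$-solutions of the ILP (\ref{ilp_target_impr})--(\ref{ilp_binary_impr}) and flag codes $\mathcal{C}\subseteq\mathcal{F}(v,q)$ with $\fdist(\mathcal{C})\ge d$. Any $0/1$-vector $x$ corresponds to the flag code $\mathcal{C}=\{\Lambda\mid x_\Lambda=1\}$, and the objective (\ref{ilp_target_impr}) is exactly $\#\mathcal{C}$. So it suffices to show that the clique constraints (\ref{ilp_clique_constraint}) hold for all $r\in\mathcal{R}_{v,d}$ and all $\mathcal{V}\in\mathcal{V}^r_{v,q}$ if and only if $\fdist(\mathcal{C})\ge d$.

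For the forward direction ($\fdist(\mathcal{C})\ge d \Rightarrow$ constraints hold), fix $r\in\mathcal{R}_{v,d}$ and $\mathcal{V}\in\mathcal{V}^r_{v,q}$. I want to show $\#(\mathcal{C}\cap\mathcal{V})\le 1$, i.e.\ $\mathcal{V}$ induces a clique in $\mathcal{G}_{v,d,q}$. For any two distinct $\Lambda,\Lambda'\in\mathcal{V}$ we have $\{\Lambda,\Lambda'\}\in E^r_{v,q}=E^{\overline{r}}_{v,q}$ by Lemma~\ref{lemma_r_bar}. Since $r\in\mathcal{R}_{v,d}$, the vector $\overline{r}$ satisfies $d>\sum_{i=1}^{v-1}(m(v)_i-\overline{r}_i)$, so Corollary~\ref{cor_clique} applied to $\overline{r}$ yields that every $\mathcal{V}'\in\mathcal{V}^{\overline{r}}_{v,q}$ is a clique in $\mathcal{G}_{v,d,q}$. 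In particular $\fdist(\Lambda,\Lambda')<d$, contradicting $\fdist(\mathcal{C})\ge d$ unless at most one of $\Lambda,\Lambda'$ lies in $\mathcal{C}$.

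For the converse direction, suppose towards a contradiction that the constraints all hold but $\fdist(\mathcal{C})<d$. Pick $\Lambda\neq\Lambda'$ in $\mathcal{C}$ with $\fdist(\Lambda,\Lambda')<d$. Lemma~\ref{lemma_other_direction} supplies a chain $U_1\le\cdots\le U_{v-1}$ and a vector $r$ with $\mathbf{0}\le r\le m(v)$ and $d>\sum_{i=1}^{v-1}(m(v)_i-r_i)$. Since $r\le\overline{r}$ we get $d>\sum_{i=1}^{v-1}(m(v)_i-\overline{r}_i)$, so $r$ belongs to the ground set of $\mathcal{R}_{v,d}$. Hence some $r^\star\in\mathcal{R}_{v,d}$ satisfies $r^\star\preceq r$. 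The definition of $\preceq$ gives either $\overline{r^\star}<\overline{r}$ or $\overline{r^\star}=\overline{r}$; in both cases $\overline{r^\star}\le\overline{r}$, so componentwise monotonicity of the $E^{\cdot}_{v,q}$-sets together with Lemma~\ref{lemma_r_bar} yields $E^{r^\star}_{v,q}=E^{\overline{r^\star}}_{v,q}\supseteq E^{\overline{r}}_{v,q}=E^r_{v,q}\ni\{\Lambda,\Lambda'\}$. Thus some $\mathcal{V}\in\mathcal{V}^{r^\star}_{v,q}$ contains both flags, so the clique constraint for that $\mathcal{V}$ is violated -- a contradiction.

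The main subtlety is bookkeeping: the set $\mathcal{R}_{v,d}$ is defined through $\overline{r}$ rather than $r$, so one must carefully convert between conditions on $r$ and on $\overline{r}$ in both directions. This is handled by pushing through Lemma~\ref{lemma_r_bar} at each step; once one remembers that $r\le\overline{r}$ reverses the inequality $d>\sum(m(v)_i-\cdot_i)$ in the right way, and that $E^r_{v,q}$ only depends on $\overline{r}$, the argument is straightforward. No further verification of $\mathcal{R}_{v,d}$-minimality is needed, only the existence of some $r^\star\preceq r$.
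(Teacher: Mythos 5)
Your proof is correct and follows essentially the same strategy as the paper's: validity of the clique constraints via Lemma~\ref{lemma_r_bar} and Corollary~\ref{cor_clique} applied to $\overline{r}$, and sufficiency via Lemma~\ref{lemma_other_direction} combined with passing to a $\preceq$-minimal $r^\star$ in $\mathcal{R}_{v,d}$. If anything, your handling of the converse direction is a bit more careful than the paper's -- you correctly track only the containment $E^{r^\star}_{v,q}\supseteq E^{r}_{v,q}$, whereas the paper loosely asserts equality of edge sets, which need not hold when $\overline{r^\star}<\overline{r}$; the containment alone suffices, as you note.
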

\begin{proof}
  We start from the ILP formulation (\ref{ilp_target})-(\ref{ilp_binary}). Now let $\Lambda,\Lambda'\in \mathcal{F}(v,q)$ with $\Lambda\neq \Lambda'$ and $\fdist(\Lambda,\Lambda')<d$. 
  From Lemma~\ref{lemma_other_direction} and Corollary~\ref{cor_clique} we conclude the existence of a vector $\mathbf{0}\le r'\le m(v)$ with $\{\Lambda,\Lambda'\}\in E^{r'}_{v,q}$, 
  which is contained in the edge set of $\mathcal{G}_{v,d,q}$. W.l.o.g.\ we can additionally assume that $d-1=\sum_{i=1}^{v-1} \left(m(v)_i-r_i'\right)$. From 
  Lemma~\ref{lemma_r_bar} 
  we then conclude the existence of $r\in \mathcal{R}_{v,d}$ with $E^{r}_{v,q}=E^{r'}_{v,q}$. 
  
  It remains to remark that for each $\mathcal{V}\in \mathcal{V}^r_{v,q}$ and each $r\in \mathcal{R}_{v,d}$ constraint~(\ref{ilp_clique_constraint}) is a valid constraint due to
  Lemma~\ref{lemma_r_bar} 
  and Corollary~\ref{cor_clique}.  
\end{proof}

\begin{table}[htp]
  \begin{center}
    \begin{tabular}{cc}
      \hline
      $(v,d)$ & $\mathcal{R}_{v,d}$ \\ 
      \hline
      $(5,1)$ & $\Big\{(1,2,2,1)\Big\}$\\
      $(5,2)$ & $\Big\{(1,2,2,0),(1,2,0,1), (1,0,2,1), (0,2,2,1)\Big\}$\\
      $(5,3)$ & $\Big\{(1,2,0,0), (1,0,2,0), (0,2,2,0), (1,0,0,1), (0,2,0,1), (0,0,2,1)\Big\}$\\
      $(5,4)$ & $\Big\{(1,0,1,0), (0,2,0,0), (0,0,2,0), (0,1,0,1)\Big\}$\\
      $(5,5)$ & $\Big\{(1,0,0,0), (0,1,1,0), (0,0,0,1)\Big\}$\\
      $(5,6)$ & $\Big\{(0,1,0,0), (0,0,1,0)\Big\}$\\ 
      $(6,1)$ & $\Big\{(1,2,3,2,1)\Big\}$\\      
      $(6,2)$ & $\Big\{(1,2,3,2,0), (1,2,3,0,1), (1,2,0,2,1), (1,0,3,2,1), (0,2,3,2,1)\Big\}$\\
      $(6,3)$ & $\Big\{(1,2,3,0,0), (1,2,0,2,0), (1,2,0,0,1), (1,0,3,2,0), (1,0,3,0,1), (1,0,0,2,1), (0,2,3,2,0),$\\ 
              & $(0,2,3,0,1), (0,2,0,2,1), (0,0,3,2,1)\Big\}$\\
      $(6,4)$ & $\Big\{(1,2,0,1,0), (1,0,3,0,0), (1,0,2,0,1), (1,0,0,2,0), (0,2,3,0,0), (0,2,0,2,0), (0,2,0,0,1),$\\ 
              &  $(0,1,0,2,1), (0,0,3,2,0), (0,0,3,0,1)\Big\}$\\
      $(6,5)$ & $\Big\{(1,2,0,0,0), (1,0,2,1,0), (1,0,0,0,1), (0,2,0,1,0), (0,1,2,0,1), (0,1,0,2,0), (0,0,3,0,0),$\\ 
              & $(0,0,0,2,1)\Big\}$\\
      $(6,6)$ & $\Big\{(1,0,2,0,0), (1,0,0,1,0), (0,2,0,0,0), (0,1,2,1,0), (0,1,0,0,1), (0,0,2,0,1), (0,0,0,2,0)\Big\}$\\
      $(6,7)$ & $\Big\{(1,0,0,0,0), (0,1,2,0,0), (0,1,0,1,0), (0,0,2,1,0), (0,0,0,0,1)\Big\}$\\
      $(6,8)$ & $\Big\{(0,1,0,0,0), (0,0,2,0,0), (0,0,0,1,0)\Big\}$\\
      $(6,9)$ & $\Big\{(0,0,1,0,0)\Big\}$\\
      \hline      
    \end{tabular}
    \caption{The sets $\mathcal{R}_{v,d}$ for small parameters.}
  \end{center}
\end{table}      

Due to combinatorial explosion, the number of variables and constraints of the ILP from Proposition~\ref{prop_ILP} gets large even for small parameters. 
So, in order to construct large flag codes we want to reduce the computational complexity by prescribing automorphisms \highlight{-- a technique that is widely 
used for the construction of many combinatorial objects.} An \emph{automorphism} $\varphi$ of 
$\mathcal{C}=\left\{\Lambda_1,\dots,\Lambda_m\right\}\subseteq \mathcal{F}(v,q)$ is an element of $\operatorname{GL}(v,q)$ such that $\mathcal{C}=
\left\{\varphi(\Lambda_1),\dots,\varphi(\Lambda_m)\right\}$. By $\operatorname{Aut}(\mathcal{C})$ we denote the group of automorphisms of $\mathcal{C}$, 
which is a subgroup of $\operatorname{GL}(v,q)$. For notational reason we rewrite the ILP from Proposition~\ref{prop_ILP} to 
$\max \sum_{\Lambda\in\mathcal{F}(v,q)} x_\Lambda$ subject to $Mx\le \mathbf{1}$, where the $x_i$ are binary variables, $\mathbf{1}$ is the all-$1$ vector, and 
$$
  M_{\mathcal{V},\Lambda}=\left\{\begin{array}{cc}
  1 & \text{if } \Lambda\in\mathcal{V},\\ 
  0 & \text{otherwise}
  \end{array}\right.
$$   
for all $\Lambda\in\mathcal{F}(v,q)$ and all $\mathcal{V}\in \mathcal{V}_{v,q}^r$, $r\in\mathcal{R}_{v,d}$. 

Now let $G\le \operatorname{Aut}(\mathcal{C})\le \operatorname{GL}(v,q)$. By $M^G$ we denote the corresponding matrix briefly defined below, see e.g.\ 
\cite{kohnert2008construction} where the method was applied to \emph{constant dimension codes}, i.e., flag codes with type $T$, where $\# T=1$. The underlying 
general method can be described as follows. In order to obtain $M^G$, the matrix $M$ is reduced by adding up  columns (labeled by the flags contained in 
$\mathcal{F}(v,q)$) corresponding to the orbits of $G$, which we denote by $\omega_1,\dots,\omega_\gamma$. Due \highlight{to} the equivalence
\begin{equation}
  \label{property_automorphism_incidences}
  U\le W\quad\Longleftrightarrow\quad \varphi(U)\le\varphi(W)
\end{equation}
for all subspaces $U,W$ of $\mathbb{F}_q^v$ and each automorphism $\varphi\in G$ we have that rows corresponding to vertex sets $\mathcal{V}$, $\mathcal{V}'$ 
in the same orbit under $G$ are equal. Therefore the redundant rows are removed from the matrix and we obtain a smaller matrix denoted by $M^G$. The number of 
rows of $M^G$ is then the number $\Gamma$ of orbits of $G$ on $\left\{\mathcal{V}\mid \mathcal{V}\in \mathcal{V}_{v,q}^r, r\in\mathcal{R}_{v,d}\right\}$, which we denote 
by $\Omega_1,\dots,\Omega_\Gamma$. The number $\gamma$ of columns of $M^G$ is the number of orbits of $G$ on the flags in $\mathcal{F}(v,q)$. For an entry of $M^G$ we have
$$
  M_{\Omega_i,\omega_j}=\#\left\{ \Lambda\in\omega_j \mid \Lambda\in \mathcal{V}\right\},
$$
where $\mathcal{V}$ is a representative of the orbit $\Omega_i$.  Because of property~(\ref{property_automorphism_incidences}) the matrix $M^G$ is well-defined 
as the definition of $M^G_{\Omega_i,\omega_j}$ is independent of the representative $\mathcal{V}$. Thus, we can restate Proposition~\ref{prop_ILP} as follows:
\begin{Theorem}
  \label{thm_ILP}
  Let $G$ be a subgroup of $\operatorname{GL}(v,q)$. There is a flag code $\mathcal{C}\subseteq \mathcal{F}(v,q)$ with minimum Grassmann distance $d$ whose 
  group of automorphisms contains $G$ as a subgroup if, and only if, there is a $(0/1)$-solution $x=\left(x_1,\dots,x_\gamma\right)^\top$ satisfying 
  $\#\mathcal{C}=\sum_{i=1}^\gamma \left|\omega_i\right|\cdot x_i$ and $M^gx\le\mathbf{1}$.
\end{Theorem}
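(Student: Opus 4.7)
The plan is to view this as a standard Kramer--Mesner-type prescribed-automorphism reduction of the ILP from Proposition~\ref{prop_ILP}. The bridge between a flag code $\mathcal{C}$ and a $(0/1)$-vector is the observation that a subgroup $G\le\operatorname{GL}(v,q)$ is contained in $\operatorname{Aut}(\mathcal{C})$ if and only if $\mathcal{C}$ is a union of $G$-orbits on $\mathcal{F}(v,q)$; under that identification the selection variables naturally live on orbit representatives $\omega_1,\dots,\omega_\gamma$ rather than on individual flags. So the goal is to show that Proposition~\ref{prop_ILP}, when restricted to $G$-invariant flag codes, descends to the condensed system $M^G x\le\mathbf{1}$.

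For the forward direction I would take a flag code $\mathcal{C}$ with $G\le\operatorname{Aut}(\mathcal{C})$ and define $x_i=1$ iff $\omega_i\subseteq\mathcal{C}$, so that $\#\mathcal{C}=\sum_{i=1}^\gamma |\omega_i|\cdot x_i$. By Proposition~\ref{prop_ILP} the vector $y\in\{0,1\}^{\mathcal{F}(v,q)}$ with $y_\Lambda=x_i$ whenever $\Lambda\in\omega_i$ satisfies $\sum_{\Lambda\in\mathcal{V}}y_\Lambda\le 1$ for every $\mathcal{V}\in\mathcal{V}_{v,q}^r$, $r\in\mathcal{R}_{v,d}$. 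Picking any representative $\mathcal{V}$ of the orbit $\Omega_i$ and splitting the sum over the orbits $\omega_j$ of $G$ gives exactly $\sum_{j=1}^\gamma M^G_{\Omega_i,\omega_j} x_j\le 1$, which is the $i$th row of $M^G x\le\mathbf{1}$.

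For the reverse direction, given a $(0/1)$-solution $x$ I would define $\mathcal{C}:=\bigcup_{x_i=1}\omega_i$. Since $\mathcal{C}$ is by construction a union of $G$-orbits, $G\le\operatorname{Aut}(\mathcal{C})$ holds and $\#\mathcal{C}=\sum_i|\omega_i|\cdot x_i$. To recover the minimum distance condition I would reverse the aggregation: for any $\mathcal{V}\in\mathcal{V}_{v,q}^r$ with $r\in\mathcal{R}_{v,d}$, $\mathcal{V}$ lies in some orbit $\Omega_i$, and the well-definedness argument below yields $\sum_{\Lambda\in\mathcal{V}}y_\Lambda=\sum_{j}M^G_{\Omega_i,\omega_j}x_j\le 1$, where $y_\Lambda=x_i$ if $\Lambda\in\omega_i$. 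Applying Proposition~\ref{prop_ILP} to $y$ then shows that $\fdist(\mathcal{C})\ge d$.

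The one nontrivial step in both directions is justifying that the rows of $M^G$ are well defined, i.e.\ that the count $\#\{\Lambda\in\omega_j\mid\Lambda\in\mathcal{V}\}$ does not depend on the chosen representative $\mathcal{V}$ of $\Omega_i$. I expect this to be the main, though routine, obstacle: given $\mathcal{V},\mathcal{V}'\in\Omega_i$ there is $\varphi\in G$ with $\varphi(\mathcal{V})=\mathcal{V}'$, and property~(\ref{property_automorphism_incidences}) applied componentwise to the subspaces $U_i$ defining a clique in $\mathcal{V}_{v,q}^r$ shows that $\Lambda\mapsto\varphi(\Lambda)$ is a bijection $\mathcal{V}\cap\omega_j\to\mathcal{V}'\cap\omega_j$ (since $\omega_j$ is $G$-invariant). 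This gives the invariance of the row entries and simultaneously legitimises the collapse of the $\mathcal{V}$-indexed constraints of Proposition~\ref{prop_ILP} to a single constraint per orbit $\Omega_i$. Once this is in place, both directions follow by plugging the reduction into Proposition~\ref{prop_ILP}.
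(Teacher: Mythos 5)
Your proposal is correct and matches what the paper does: the paper states Theorem~\ref{thm_ILP} without a formal proof, relying on the Kramer--Mesner reduction sketched in the paragraph defining $M^G$, and your argument fills in exactly those details. In particular, your treatment of the key step — that $G\le\operatorname{Aut}(\mathcal{C})$ iff $\mathcal{C}$ is a union of $G$-orbits, and that property~(\ref{property_automorphism_incidences}) makes $\varphi$ a bijection $\mathcal{V}\cap\omega_j\to\varphi(\mathcal{V})\cap\omega_j$ so that $M^G$ is well defined and the clique constraints collapse orbitwise — is precisely the justification the paper alludes to before stating the theorem.
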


Note that $M_{\Omega_i,\omega_j}>1$ implies $x_{\omega_j}=0$. However, those conclusions are automatically drawn in a preprocessing step by the most commonly used 
ILP solvers.

\begin{Example}
  \label{ex_prescribe_automorphims} 
  We want to apply Theorem~\ref{thm_ILP} in order to obtain lower bounds for $A_2^f(5,2)$. Without prescribing automorphisms there are $\#\mathcal{F}(5,2)=9765$ full flags, 
  i.e., variables, and $13020 \highlight{=4\cdot 3255}$ constraints, since $\#\mathcal{V}^{(1,2,2,0)}_{5,2}=\#\mathcal{V}^{(1,2,0,1)}_{5,2}=\#\mathcal{V}^{(1,0,2,1)}_{5,2}=\#\mathcal{V}^{(0,2,2,1)}_{5,2} 
  =3255$. We prescribe a group $G$ of automorphisms generated by a single element:
  $$
    G:=\left\langle
    \begin{pmatrix}
      0 & 1 & 0 & 0 & 0\\
      0 & 0 & 1 & 0 & 0\\
      0 & 0 & 0 & 1 & 0\\
      0 & 0 & 0 & 0 & 1\\
      1 & 0 & 1 & 1 & 1\\
    \end{pmatrix}    
    \right\rangle.
  $$ 
  $G$ is a cyclic group of order $31$ -- indeed it is a Singer group. The reduced ILP consists of $420$ constraints and $315$ binary variables. Using the ILP solver 
  \texttt{ILOG CPLEX}\footnote{\highlight{\url{https://www.ibm.com/de-de/products/ilog-cplex-optimization-studio}}} an optimal solution with target value\footnote{\highlight{
  The target value of a feasible solution of an optimization problem is the value of the function that is optimized evaluated at that point. In the ILP of 
  Proposition~\ref{prop_ILP} the target function is the sum on the right hand side of (\ref{ilp_target_impr}).}} $3069$ was found after 213~seconds of computation time and 
  68\,180 branch-\&-bound nodes. \highlight{Thus, we can conclude $A_2^f(5,2)\ge 3069$.}  
  The group given by 
  $$
    G:=\left\langle
    \begin{pmatrix}
    1 & 0 & 0 & 0 & 0\\
    0 & 0 & 1 & 0 & 0\\
    0 & 0 & 0 & 1 & 0\\
    0 & 0 & 0 & 0 & 1\\
    0 & 1 & 1 & 0 & 0
    \end{pmatrix}    
    \right\rangle
  $$
  is a cyclic group of order $15$, and indeed a Singer group of a hyperplane. The corresponding reduced ILP consists of $865$~constraints and $651$ binary variables. 
  After 11~minutes and 24\,895 branch-\&-bound nodes a flag code with cardinality $3120$ was found, \highlight{so that we can conclude 
  $A_2^f(5,2)\ge 3120$}. After 9~hours and 6\,799\,282 branch-\&-bound nodes the upper 
  bound dropped to 3178 while no better solution was found. \highlight{So, possibly  a code $\mathcal{C}$ with cardinality $3120<\#\mathcal{C}\le 3178$ might be found 
  if we give the ILP solver more time to finish the computation. Although we have aborted the computation, we can still draw the conclusion that there is no code 
  of cardinality strictly larger than $3178$ that admits $G$ as a subgroup of its automorphisms. However, this does not give an upper bound for $A_2^f(5,2)$ at all.} 
  For a cyclic group of order $15$ we found that the \highlight{optimal} target value\footnote{\highlight{By an optimal 
  target value we denote the target value that is attained in the extremum, i.e., the maximum or minimum depending on the formulation of the optimization problem.}} lies between 
  $2982$ and  $3068$. Since already the upper bound is strictly less than the cardinality of the best known solution we have aborted the solution process.
  
  Performing a more extensive computational experiment we remark that there are several groups where we can easily verify that the corresponding upper bound is 
  strictly less than $3120$, \highlight{i.e. prescribing such groups will not give us better codes}. An example is given by the matrix 
  $$
    \begin{pmatrix}
      1 & 0 & 0 & 0 & 0\\
      0 & 1 & 0 & 0 & 0\\
      0 & 0 & 1 & 0 & 0\\
      0 & 0 & 0 & 1 & 1\\
      0 & 0 & 0 & 0 & 1                                        
    \end{pmatrix},
  $$ 
  which generates a group of order $2$, has $116$ fix points, and which does not allow a flag code with cardinality strictly larger than $2807$. Examples 
  of small groups where the achievable cardinality is strictly smaller than $3255$, \highlight{i.e.\ candidates for groups that possibly may yield better 
  codes than currently known but definitely cannot reach the best known upper bound for $A_2^f(5,2)$,} are given by the matrices
  $$
    \begin{pmatrix}
      1 & 0 & 0 & 0 & 0\\
      0 & 1 & 0 & 0 & 0\\
      0 & 0 & 1 & 0 & 0\\
      0 & 0 & 0 & 0 & 1\\
      0 & 0 & 0 & 1 & 1  
    \end{pmatrix}
    \quad\text{and}\quad
    \begin{pmatrix}
      1 & 0 & 0 & 0 & 0\\
      0 & 1 & 1 & 0 & 0\\
      0 & 0 & 1 & 0 & 0\\
      0 & 0 & 0 & 1 & 1\\
      0 & 0 & 0 & 0 & 1                                        
    \end{pmatrix},
  $$   
  which generate cyclic groups of orders $3$ or $2$, have $30$ or $52$ fix points, and where we have upper bounds on the cardinality of $3171$ or $3144$, respectively. 
  Examples of cyclic groups where the ILP approach did not bring the upper bound strictly below $3255$, \highlight{i.e. which still might allow codes matching the 
  known upper bound $A_2^f(5,2)\le 3225$ from Proposition~\ref{prop_a_5_2}}, after a reasonable computation time are given by the matrices
  $$
    \begin{pmatrix}
      1 & 0 & 0 & 0 & 0\\
      0 & 0 & 1 & 0 & 0\\
      0 & 1 & 1 & 0 & 0\\
      0 & 0 & 0 & 0 & 1\\
      0 & 0 & 0 & 1 & 1
    \end{pmatrix},
    \begin{pmatrix}
      1 & 0 & 0 & 0 & 0\\
      0 & 1 & 0 & 0 & 0\\
      0 & 0 & 0 & 1 & 0\\
      0 & 0 & 0 & 0 & 1\\
      0 & 0 & 1 & 0 & 1
    \end{pmatrix}, 
    \begin{pmatrix}
      1 & 0 & 0 & 0 & 0\\
      0 & 1 & 0 & 0 & 0\\
      0 & 0 & 0 & 1 & 0\\
      0 & 0 & 0 & 0 & 1\\
      0 & 0 & 1 & 1 & 0
    \end{pmatrix},\text{ and }
    \begin{pmatrix}
      1 & 0 & 0 & 0 & 0\\
      0 & 0 & 1 & 0 & 0\\
      0 & 0 & 0 & 1 & 0\\
      0 & 0 & 0 & 0 & 1\\
      0 & 1 & 1 & 1 & 1
    \end{pmatrix}.
  $$
  The corresponding orders are $3$, $7$, $7$, and $5$, respectively. ($12$, $0$, $8$, and $2$ fix points.) 
  \highlight{To sum up, we have $3120\le A_2^f(5,2)\le 3225$, where only the lower bound is obtained with ILP computations and the stated upper bound is given by 
  Proposition~\ref{prop_a_5_2}.}  
\end{Example}  
\highlight{A concrete example of a flag code described by orbit representatives is stated directly after Proposition~\ref{prop_a_5_4}.}

We remark that the ILP formulations from Proposition~\ref{prop_ILP} and Theorem~\ref{thm_ILP} can be enhanced by additional  
bounds for substructures of flag codes. Examples are the bounds from Proposition~\ref{prop_johnson_point} and Proposition~\ref{prop_cdc} 
in the subsequent Section~\ref{sec_bounds}.

\section{\highlight{Upper bounds}}  
\label{sec_bounds}

In this section we want to generalize the idea underlying the upper bound of Proposition~\ref{prop_a_4_2} for $A_q^f(4,2)$, see Theorem~\ref{thm_anticode_bound}.
It will turn out that this can be seen as a generalization of the anticode bound for constant dimension codes \highlight{\cite[Theorem 5.2]{wang2003linear}}. In 
Proposition~\ref{prop_johnson_point}  we follow the approach of the Johnson bound for constant dimension codes \highlight{\cite[Theorem 2]{xia2009johnson}}. 
Together with Proposition~\ref{prop_cdc} we determine a general explicit upper bound of the 
form $A_q^f(v,d)\le q^\beta+O\!\left(q^{\beta-1}\right)$, see Proposition~\ref{prop_asymptotic_upper_bound}.

\begin{Definition}
  \label{def_weakly_increasing}
  Let $\mathcal{I}\subseteq \mathbb{N}$ and $U_i\le \mathbb{F}_q^v$ for all $i\in\mathcal{I}$. We call $\left(U_i\right)_{i\in\mathcal{I}}$ \emph{weakly increasing} 
  if $U_i\le U_j$ for all $i,j\in\mathcal{I}$ with $i\le j$.
\end{Definition}

\begin{Theorem}
  \label{thm_anticode_bound}
  Let $\mathbf{0}\le r\le m(v)$ with $d>\sum_{i=1}^{v-1} \left(m(v)_i-\overline{r}_i\right)$ and 
  $\mathcal{I}=\{1\le i\le v-1\mid r_i>0\}$. Then, we have $A_q^f(v,d)\le \#\mathcal{U} / \#\widehat{\mathcal{U}}$, where
  \begin{eqnarray*}
    \mathcal{U} &=&\left\{ \left(U_i\right)_{i\in\mathcal{I}} \text{ weakly increasing }\mid \dim(U_i)=i-m(v)_i+r_i \,\forall i\in\mathcal{I}\right\},\\ 
    \widehat{\mathcal{U}} &=& \left\{ \left(U_i\right)_{i\in\mathcal{I}} \text{ weakly increasing }\mid \dim(U_i)=i-m(v)_i+r_i, U_i\le W_i' \,\forall i\in\mathcal{I} \right\},
  \end{eqnarray*}
  and $\Lambda'=\left(W_1',\dots,W_{v-1}'\right)\in\mathcal{F}(v,q)$ is an arbitrary but fixed full flag. 
  
  \highlight{Setting $u_i=i-m(v)_i+r_i$ for $1\le i\le v-1$ and $\mathcal{I}=\{1\le i\le v-1\mid r_i>0\}=
  \left\{k_1,\dots,k_m\right\}$, where $0<k_1<\dots<k_m<v$, we have
  $$
    \frac{\#\mathcal{U}}{\#\widehat{\mathcal{U}}}=\frac{\gaussm{v}{u_{k_1}}{q}\cdot \prod_{i=2}^{m} \gaussm{v-u_{k_{i-1}}}{u_{k_i}-u_{k_{i-1}}}{q}}
    {\gaussm{k_1}{u_{k_1}}{q}\cdot \prod_{i=2}^m \gaussm{ k_i-u_{k_{i-1}} }{u_{k_i}-u_{k_{i-1}} } {q}}.
  $$}
\end{Theorem}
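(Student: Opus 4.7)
The plan is a classical anticode-style double-counting argument. For each chain $\mathbf{U} = (U_i)_{i \in \mathcal{I}} \in \mathcal{U}$ set
$$
  \mathcal{V}_{\mathbf{U}} \,=\, \Big\{ (W_1, \dots, W_{v-1}) \in \mathcal{F}(v,q) \,\Big|\, U_i \le W_i \,\,\forall i \in \mathcal{I} \Big\}.
$$
First I would show that $\mathcal{V}_{\mathbf{U}}$ is a clique in $\mathcal{G}_{v,d,q}$; second, I would count in two ways the incidences between a full flag code $\mathcal{C}$ of minimum distance $\ge d$ and the family $\{\mathcal{V}_{\mathbf{U}}\}_{\mathbf{U}\in\mathcal{U}}$; finally, I would evaluate $\#\mathcal{U}$ and $\#\widehat{\mathcal{U}}$ as products of Gaussian binomials.

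For the clique step, note that Corollary~\ref{cor_clique} applies directly only under the potentially stronger hypothesis $d > \sum_i (m(v)_i - r_i)$; under the weaker hypothesis $d > \sum_i (m(v)_i - \overline{r}_i)$ one has to argue as in the proof of Lemma~\ref{lemma_r_bar}. Given two distinct flags $\Lambda = (W_1, \dots, W_{v-1})$ and $\Lambda' = (W_1', \dots, W_{v-1}')$ in $\mathcal{V}_{\mathbf{U}}$, one has $U_i \le W_i \cap W_i'$ for every $i \in \mathcal{I}$, so $\dim(W_i \cap W_i') \ge u_i$ there. Iterating Lemma~\ref{lemma_intersection_propagation} along the ambient flags propagates these bounds to all indices, giving $\dim(W_j \cap W_j') \ge \overline{u}_j = j - m(v)_j + \overline{r}_j$ for every $1 \le j \le v-1$. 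Summing $\idist(W_j, W_j') = j - \dim(W_j \cap W_j') \le m(v)_j - \overline{r}_j$ yields $\fdist(\Lambda, \Lambda') \le \sum_i (m(v)_i - \overline{r}_i) < d$, so any two distinct elements of $\mathcal{V}_{\mathbf{U}}$ are adjacent in $\mathcal{G}_{v,d,q}$. This propagation is the only genuine technical point of the argument.

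For the double count, consider the set of incidences $\{(\Lambda, \mathbf{U}) \mid \Lambda \in \mathcal{C},\, \mathbf{U} \in \mathcal{U},\, \Lambda \in \mathcal{V}_{\mathbf{U}}\}$. Summing over $\mathbf{U}$ and using the clique property yields the upper bound $\sum_{\mathbf{U}\in\mathcal{U}} \#(\mathcal{C} \cap \mathcal{V}_{\mathbf{U}}) \le \#\mathcal{U}$. Summing over $\Lambda \in \mathcal{C}$, each flag contributes the number of chains $\mathbf{U} \in \mathcal{U}$ contained in it (that is, with $U_i \le W_i$ for every $i \in \mathcal{I}$), and by transitivity of $\operatorname{GL}(v,q)$ on full flags this number equals $\#\widehat{\mathcal{U}}$ independently of $\Lambda$, since the $\operatorname{GL}(v,q)$-action preserves both incidences and dimensions. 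This gives $\#\mathcal{C} \cdot \#\widehat{\mathcal{U}} \le \#\mathcal{U}$ and hence $A_q^f(v,d) \le \#\mathcal{U}/\#\widehat{\mathcal{U}}$.

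For the closed form, write $\mathcal{I} = \{k_1 < \dots < k_m\}$ and $u_{k_j} = k_j - m(v)_{k_j} + r_{k_j}$. A chain in $\mathcal{U}$ is built iteratively: choose $U_{k_1}$ as any $u_{k_1}$-space in $\mathbb{F}_q^v$ (giving $\gaussm{v}{u_{k_1}}{q}$ options), and for $j \ge 2$ choose $U_{k_j} \supseteq U_{k_{j-1}}$ by picking its image in the $(v - u_{k_{j-1}})$-dimensional quotient $\mathbb{F}_q^v/U_{k_{j-1}}$ (giving $\gaussm{v - u_{k_{j-1}}}{u_{k_j} - u_{k_{j-1}}}{q}$ options). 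The analogous recipe inside the fixed flag $\Lambda'$ replaces $\mathbb{F}_q^v$ by $W_{k_1}'$ and $\mathbb{F}_q^v/U_{k_{j-1}}$ by $W_{k_j}'/U_{k_{j-1}}$, which has dimension $k_j - u_{k_{j-1}}$, and produces the analogous product for $\#\widehat{\mathcal{U}}$. Taking the ratio gives exactly the displayed expression.
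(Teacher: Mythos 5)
Your proposal is correct and follows essentially the same route as the paper: both establish that the sets $\mathcal{V}_{\mathbf{U}}$ are cliques (you inline the propagation argument of Lemma~\ref{lemma_r_bar} via Lemma~\ref{lemma_intersection_propagation}, whereas the paper cites Corollary~\ref{cor_clique} together with Lemma~\ref{lemma_r_bar}), then derive the bound $\#\mathcal{C}\cdot\#\widehat{\mathcal{U}}\le\#\mathcal{U}$ by the same double count, and evaluate $\#\mathcal{U}$ and $\#\widehat{\mathcal{U}}$ by the same iterative Gaussian-binomial count. The only cosmetic difference is that you justify the independence of $\#\widehat{\mathcal{U}}$ from $\Lambda'$ by $\operatorname{GL}(v,q)$-transitivity, while the paper simply observes it from the explicit closed form.
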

\begin{proof}
  Let $\mathcal{C}$ be a full flag code in $\mathbb{F}_q^v$ with minimum Grassmann distance $d$. From Corollary~\ref{cor_clique} and Lemma~\ref{lemma_r_bar} we conclude
  $$
    \#\left\{ \left(W_1,\dots,W_{v-1}\right)\in\mathcal{C} \mid U_i\le W_i \right\}\le 1
  $$ 
  for each $\left(U_i\right)_{i\in\mathcal{I}}\in\mathcal{U}$. If $\left(W_1',\dots,W_{v-1}'\right)\in\mathcal{C}$ is arbitrary but fixed, then there are exactly 
  $\#\widehat{\mathcal{U}}$ elements $\left(U_i\right)_{i\in\mathcal{I}}\in\mathcal{U}$ with $U_i\le W_i'$ for all $i\in \mathcal{I}$ since $\#\widehat{\mathcal{U}}$ 
  is independent of the choice of $\Lambda'$ \highlight{as we will see in the remaining counting part.} 
  
  From Equation~(\ref{eq_count_flags}) we conclude 
  $$
    \#\mathcal{U}=\gaussm{v}{u_{k_1}}{q}\cdot \prod_{i=2}^{m} \gaussm{v-u_{k_{i-1}}}{u_{k_i}-u_{k_{i-1}}}{q}.
  $$
  If $A\le B$ are two subspaces in $\mathbb{F}_q^v$, then the number of subspaces $X$ with $A\le X\le B$ with dimension $\dim(A)\le x\le\dim(B)$ is given by 
  $\gaussm{\dim(B)-\dim(A)}{x-\dim(A)}{q}$. Thus, we can iteratively conclude
  $$
    \#\widehat{\mathcal{U}}=\gaussm{k_1}{u_{k_1}}{q}\cdot \prod_{i=2}^m \gaussm{ k_i-u_{k_{i-1}} }{u_{k_i}-u_{k_{i-1}} } {q}.
  $$  
\end{proof}

We remark that Theorem~\ref{thm_anticode_bound} generalizes the \emph{anticode bound} for constant dimension subspace codes \highlight{\cite[Theorem 5.2]{wang2003linear}}, i.e., 
$$
A_q^i(v,d;k)\le \gaussm{v}{k-d+1}{q}/\gaussm{k}{k-d+1}{q}.
$$

\begin{Example}
  \highlight{In order to obtain upper bounds for $A_q^f(6,7)$ and $A_q^f(6,6)$, we apply Theorem~\ref{thm_anticode_bound} for the vectors $r\in\mathcal{R}_{v,d}$.}   
  \begin{itemize}
    \item For $r=(1,0,0,0,0)$ Theorem~\ref{thm_anticode_bound} gives $A_q^f(6,7)\le \gaussm{6}{1}{q}=q^5+q^4+q^3+q^2+q+1$.
    \item For $r=(0,1,2,0,0)$ Theorem~\ref{thm_anticode_bound} gives $A_q^f(6,7)\le \gaussm{6}{1}{q}\cdot \gaussm{5}{1}{q}/\gaussm{2}{1}{q}/\gaussm{2}{1}{q}=q^7 + 2q^5 
          + 3q^3 - q^2 + 3q - 2+\frac{3}{q+1}$.
    \item For $r=(0,1,0,1,0)$ Theorem~\ref{thm_anticode_bound} gives $A_q^f(6,7)\le \gaussm{6}{1}{q}\cdot\gaussm{5}{2}{q}/\gaussm{2}{1}{q}/\gaussm{3}{2}{q}=
          q^8 + 2q^6 + q^5 + 2q^4 + q^3 + 2q^2 + 1$.
    \item For $r=(1, 0, 2, 0, 0)$ Theorem~\ref{thm_anticode_bound} gives $A_q^f(6,6)\le\gaussm{6}{1}{q}\cdot\gaussm{5}{1}{q}/\gaussm{2}{1}{q}=
          \gaussm{6}{2}{q}=q^8 + q^7 + 2q^6 + 2q^5 + 3q^4 + 2q^3 + 2q^2 + q + 1$.            
    \item For $r=(1, 0, 0, 1, 0)$ Theorem~\ref{thm_anticode_bound} gives $A_q^f(6,6)\le \gaussm{6}{1}{q}\cdot\gaussm{5}{2}{q}/\gaussm{3}{1}{q}=
              q^9 + q^8 + 2q^7 + 3q^6 + 3q^5 + 3q^4 + 3q^3 + 2q^2 + q + 1$.
    \item For $r=(0, 2, 0, 0, 0)$ Theorem~\ref{thm_anticode_bound} gives $A_q^f(6,6)\le\gaussm{6}{2}{q}=q^8 + q^7 + 2q^6 + 2q^5 + 3q^4 + 2q^3 + 2q^2 + q + 1$.
    \item For $r=(0, 1, 2, 1, 0)$ Theorem~\ref{thm_anticode_bound} gives $A_q^f(6,6)\le\gaussm{6}{1}{q}\cdot\gaussm{5}{1}{q}\cdot \gaussm{4}{1}{q}/\gaussm{2}{1}{q}^3=
          q^9 + 3q^7 + 5q^5 - q^4 + 6q^3 - 3q^2 + 6q - 5+\frac{6}{q+1}$.
  \end{itemize}
  \highlight{So, different choices for $r$ may result in different bounds. Note that for each $r\in\mathcal{R}_{v,d}$ also the vector $r':=\left(r_{v-1},\dots,r_1\right)$ 
  is contained in $\mathcal{R}_{v,d}$, e.g.\ $r=(1, 0, 0, 1, 0)$ and $r'=(0, 1, 0, 0, 1)$. Applying Theorem~\ref{thm_anticode_bound} to $r'$ gives $A_q^f(6,6)\le 
  \gaussm{6}{1}{q}\cdot\gaussm{5}{1}{q}/\gaussm{2}{1}{q}=\gaussm{6}{2}{q}=q^8 + q^7 + 2q^6 + 2q^5 + 3q^4 + 2q^3 + 2q^2 + q + 1$. Each such pair $r,r'$ leads to the same 
  upper bound, which is explained by duality. So, our above enumeration of roughly half of the elements of $\mathcal{R}_{6,7}$ and $\mathcal{R}_{6,6}$ is sufficient 
  to find the tightest possible upper bound based on Theorem~\ref{thm_anticode_bound} cf.\ Proposition~\ref{prop_a_6_6} and Proposition~\ref{prop_a_6_7}.  
  Note that the bound for $A_q^f(6,6)$ can be concluded from the vectors $r=(0, 2, 0, 0, 0)$ and $r=(1, 0, 2, 0, 0)$, which is not explained 
  by duality.}
\end{Example}

We summarize these examples \highlight{to} the following two upper bounds.
\begin{Proposition}
  \label{prop_a_6_6}
  $$A_q^f(6,6)\le\gaussm{6}{2}{q}=q^8 + q^7 + 2q^6 + 2q^5 + 3q^4 + 2q^3 + 2q^2 + q + 1$$
\end{Proposition}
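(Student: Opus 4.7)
The plan is to obtain the bound as a direct instance of Theorem~\ref{thm_anticode_bound}, and the only real work is picking a convenient witness $r \in \mathcal{R}_{6,6}$ and checking its hypotheses. Scanning the list of vectors in $\mathcal{R}_{6,6}$ from the table (or, equivalently, the worked example just preceding the proposition), I would take $r = (0,2,0,0,0)$, whose index set $\mathcal{I} = \{2\}$ is a singleton, so the quotient $\#\mathcal{U}/\#\widehat{\mathcal{U}}$ collapses to a single Gaussian coefficient.

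First I would verify the hypothesis $d > \sum_{i=1}^{5}\bigl(m(v)_i - \overline{r}_i\bigr)$ of Theorem~\ref{thm_anticode_bound}. Using $m(6) = (1,2,3,2,1)$ and Definition~\ref{def_r_bar}, I compute $u = (0,2,0,2,4)$, propagate this to $\overline{u} = (0,2,2,2,4)$ by taking the relevant maxima, and read off $\overline{r} = (0,2,2,0,0)$. Then the right hand sum equals $1+0+1+2+1 = 5 < 6 = d$, so the hypothesis holds.

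Now Theorem~\ref{thm_anticode_bound} applies, and with $\mathcal{I} = \{2\}$, $k_1 = 2$, and $u_{k_1} = k_1 - m(v)_{k_1} + r_{k_1} = 2 - 2 + 2 = 2$, the explicit ratio at the end of that theorem reduces to
$$
  \frac{\#\mathcal{U}}{\#\widehat{\mathcal{U}}} = \frac{\gaussm{6}{2}{q}}{\gaussm{2}{2}{q}} = \gaussm{6}{2}{q},
$$
which is exactly the stated upper bound.

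The only thing that could go wrong is the bookkeeping in computing $\overline{r}$, since the shifted expressions $u_i - 2(i-j)$ in Definition~\ref{def_r_bar} are a bit error-prone; once $\overline{r}$ is correct the rest is mechanical. As an independent sanity check, the witness $r = (1,0,2,0,0)$ (which is also listed in the example and corresponds to $\mathcal{I} = \{1,3\}$) yields the ratio $\gaussm{6}{1}{q}\gaussm{5}{1}{q}/\gaussm{2}{1}{q}$, and by the Gaussian binomial identity this equals $\gaussm{6}{2}{q}$ as well, confirming the bound from a second angle.
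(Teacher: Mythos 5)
Your proof is correct and takes essentially the same route as the paper: both apply Theorem~\ref{thm_anticode_bound} with the witness $r=(0,2,0,0,0)$ and note $\overline{r}=(0,2,2,0,0)$. Your explicit verification of $\overline{r}$ and the independent check via $r=(1,0,2,0,0)$ are careful additions but do not change the method.
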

\begin{proof}
  We apply Theorem~\ref{thm_anticode_bound} with $r=(0,2,0,0,0)$ noting that $\overline{r}=(0,2,2,0,0)$.
\end{proof}
\highlight{For $q=2$ Proposition~\ref{prop_a_6_6} gives $A_2^f(6,6)\le 651$. Let $g_6$ be a generator of a Singer group in $\mathbb{F}_2^6$, i.e., a 
cyclic group of order $63$. Then $g_6^9$ is a generator of a cyclic group of order $7$.} If we prescribe the cyclic group of order $7$ generated by $g_6^9$, \highlight{i.e.,}  
\highlight{$$
  g_6^9:=\begin{pmatrix}
   0 & 1 & 0 & 0 & 0 & 0\\
   0 & 0 & 1 & 0 & 0 & 0\\
   1 & 1 & 0 & 0 & 0 & 0\\
   0 & 0 & 0 & 0 & 1 & 0\\
   0 & 0 & 0 & 0 & 0 & 1\\
   0 & 0 & 0 & 1 & 1 & 0
  \end{pmatrix}
$$}
for $q=2$, then the corresponding ILP of Theorem~\ref{thm_ILP} 
admits a solution of cardinality $224$, while we aborted the solution process before it was finished. \highlight{So, we have $224\le A_2^f(6,6)\le 651$ 
and at least one of the bounds is rather weak. Later on we improve the upper bound to $A_2^f(6,6)\le 567$, see Corollary~\ref{cor_a_6_6_j}.}

\begin{Proposition}
  \label{prop_a_6_7}
  $$A_q^f(6,7)\le\gaussm{6}{1}{q}=q^5+q^4+q^3+q^2+q+1$$
\end{Proposition}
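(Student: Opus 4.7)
The plan is to invoke Theorem~\ref{thm_anticode_bound} directly, with the choice $r=(1,0,0,0,0)$. This vector lies in $\mathcal{R}_{6,7}$ (it appears in the table for $(v,d)=(6,7)$), so it is a legitimate candidate, and it is the one that will give the tightest bound because it isolates a single coordinate with the smallest possible $u_i$.

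First I would compute $\overline{r}$ via Definition~\ref{def_r_bar}. With $m(6)=(1,2,3,2,1)$, the vector $u_j=\max\{2j-6,0\}+r_j$ is $(1,0,0,2,4)$, and propagating it to a weakly increasing vector (from the left) and simultaneously bounding via the propagation-backwards rule from the right yields $\overline{u}=(1,1,1,2,4)$, hence $\overline{r}=(1,1,1,0,0)$. The hypothesis of Theorem~\ref{thm_anticode_bound} demands $d>\sum_{i=1}^{v-1}(m(v)_i-\overline{r}_i)$, and here that sum equals $0+1+2+2+1=6<7=d$, so the theorem applies.

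Next I would unpack the counting expressions. Since $\mathcal{I}=\{1\}$ and the prescribed dimension is $u_1=1-m(6)_1+r_1=1$, the set $\mathcal{U}$ consists of all points of $\mathbb{F}_q^6$, giving $\#\mathcal{U}=\gaussm{6}{1}{q}$. For any fixed full flag $\Lambda'=(W_1',\dots,W_5')$ the set $\widehat{\mathcal{U}}$ consists of points contained in the point $W_1'$, and hence has cardinality $1$. Consequently $A_q^f(6,7)\le \#\mathcal{U}/\#\widehat{\mathcal{U}}=\gaussm{6}{1}{q}$, which is the claimed bound.

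There is no real obstacle here: the only potentially error-prone step is the computation of $\overline{r}$, but that is purely mechanical from Definition~\ref{def_r_bar}. Conceptually the bound is the flag-code analogue of the trivial observation that prescribing a common point of two flags already forces their Grassmann distance on the remaining five coordinates to be at most $m(6)_2+\cdots+m(6)_5-1=8<7$ short of the required distance $7$ as soon as the point equality is combined with what weakly increasing intersection chains imply further up the flag — which is exactly what Lemma~\ref{lemma_r_bar} encodes via $\overline{r}$.
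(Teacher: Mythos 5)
Your proof is correct and takes exactly the same approach as the paper's, which applies Theorem~\ref{thm_anticode_bound} with $r=(1,0,0,0,0)$ and notes $\overline{r}=(1,1,1,0,0)$; your computation of $\overline{r}$ and the cardinalities $\#\mathcal{U}=\gaussm{6}{1}{q}$, $\#\widehat{\mathcal{U}}=1$ are all right. (The only blemish is the informal remark at the end, where the arithmetic ``$m(6)_2+\cdots+m(6)_5-1=8<7$'' is garbled; the intended observation is that $W_1=W_1'$ forces $\fdist(\Lambda,\Lambda')\le 0+1+2+2+1=6<7$, matching $\sum_i\bigl(m(6)_i-\overline{r}_i\bigr)=6$.)
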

\begin{proof}
  We apply Theorem~\ref{thm_anticode_bound} with $r=(1,0,0,0,0)$ noting that $\overline{r}=(1,1,1,0,0)$.
\end{proof}
\highlight{For $q=2$ Proposition~\ref{prop_a_6_7} gives $A_2^f(6,7)\le 63$. } 
If we prescribe the cyclic group of order $7$ \highlight{generated by $g_6^9$, see above,} for $q=2$, then the corresponding ILP of Theorem~\ref{thm_ILP} admits a solution of cardinality $63$, which was found in the root node. 
\highlight{Thus, we have $A_q^f(6,7)=63$.} 

Another example of the application of Theorem~\ref{thm_anticode_bound} is given by:
\begin{Proposition}
  \label{prop_a_6_8}
  $$A_q^f(6,8)\le\gaussm{6}{1}{q} \highlight{/} \gaussm{2}{1}{q}=q^4+q^2+1$$
\end{Proposition}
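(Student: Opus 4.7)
My plan is to apply Theorem~\ref{thm_anticode_bound} with the vector $r = (0,1,0,0,0) \in \mathcal{R}_{6,8}$, which places the single nontrivial constraint at dimension $2$. Since $m(6) = (1,2,3,2,1)$, the first step is to verify the hypothesis $d > \sum_{i=1}^{v-1}\!\left(m(v)_i - \overline{r}_i\right)$. Following Definition~\ref{def_r_bar} one computes $u = (0,1,0,2,4)$, then $\overline{u} = (0,1,1,2,4)$, and therefore $\overline{r} = (0,1,1,0,0)$. This gives $\sum_{i=1}^{5}\!\left(m(6)_i - \overline{r}_i\right) = 1+1+2+2+1 = 7 < 8 = d$, so the hypothesis of Theorem~\ref{thm_anticode_bound} is satisfied.

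Next I evaluate the quotient $\#\mathcal{U}/\#\widehat{\mathcal{U}}$ supplied by the theorem. With $\mathcal{I} = \{2\}$ and $u_2 = 2 - m(6)_2 + r_2 = 1$, the weakly increasing sequences $(U_i)_{i\in\mathcal{I}}$ reduce to the choice of a single $1$-space in $\mathbb{F}_q^6$, so $\#\mathcal{U} = \gaussm{6}{1}{q}$. For a fixed reference full flag $\Lambda' = (W_1',\dots,W_5')$, the set $\widehat{\mathcal{U}}$ consists of those $1$-spaces $U_2$ that are contained in the fixed $2$-space $W_2'$, hence $\#\widehat{\mathcal{U}} = \gaussm{2}{1}{q}$. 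Theorem~\ref{thm_anticode_bound} then yields
$$
  A_q^f(6,8) \;\le\; \frac{\gaussm{6}{1}{q}}{\gaussm{2}{1}{q}} \;=\; \frac{q^6-1}{q^2-1} \;=\; q^4+q^2+1,
$$
which is exactly the claimed bound.

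I do not expect any substantive obstacle, since all the heavy lifting is already packaged inside Theorem~\ref{thm_anticode_bound}; the computation is essentially an application of its closed-form expression. The only genuine choice to make is which vector $r \in \mathcal{R}_{6,8}$ to plug in: among the three candidates $(0,1,0,0,0)$, $(0,0,2,0,0)$, $(0,0,0,1,0)$, only $(0,1,0,0,0)$ produces the small denominator $\gaussm{2}{1}{q}$ characteristic of this bound, whereas the other two (when used in the "contained-in" formulation) yield the weaker ratio $\gaussm{6}{2}{q}/\gaussm{3}{1}{q}$, respectively $\gaussm{6}{3}{q}/\gaussm{4}{1}{q}$.
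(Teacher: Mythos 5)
Your proposal is exactly the paper's argument: apply Theorem~\ref{thm_anticode_bound} with $r=(0,1,0,0,0)$, verify $\overline{r}=(0,1,1,0,0)$ so that $\sum_i(m(6)_i-\overline{r}_i)=7<8$, and read off $\gaussm{6}{1}{q}/\gaussm{2}{1}{q}$. All the intermediate computations ($m(6)$, $u$, $\overline{u}$, $\overline{r}$, and the closed form of the quotient) check out.
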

\begin{proof}
  We apply Theorem~\ref{thm_anticode_bound} with $r=(0,1,0,0,0)$ noting that $\overline{r}=(0,1,1,0,0)$.
\end{proof}

We remark that applying Theorem~\ref{thm_anticode_bound} with $r=(0,0,2,0,0)$ gives $A_2^f(6,8)\le \gaussm{6}{2}{2}/\gaussm{3}{2}{2}=93$. However, we also get 
$A_2^f(6,8)\le A_2^i(6,2;3)=77$ from $r=(0,0,2,0,0)$, which is of course superseded by $r=(0,1,0,0,0)$, \highlight{where Proposition~\ref{prop_a_6_8} 
yields $A_2^f(6,8)\le 21$}. For $q=2$, solving the ILP from Proposition~\ref{prop_ILP} 
directly gives a full flag code of matching cardinality $21$ after 35~minutes and 2577 branch-\&-bound nodes. If we prescribe the cyclic group of order $7$ 
\highlight{generated by $g_6^9$, see above,} 
then the corresponding ILP of Theorem~\ref{thm_ILP} admits a solution of cardinality $21$, which was found in the root node. \highlight{Thus, we have $A_2^f(6,8)=21$.} 

For constant dimension codes the anticode bound was improved to the so-called \emph{Johnson bound} $A_q^i(v,d;k)\le \left\lfloor\frac{q^v-1}{q^k-1}\cdot A_q^i(v-1,d;k-1)\right\rfloor$ 
for the cases where $d<k$, \highlight{see \cite[Theorem 2]{xia2009johnson}}. More precisely, without rounding down the iterative application of the Johnson bound together with $A_q^i(v,k;k)\le \frac{q^v-1}{q^k-1}$ implies 
the anticode bound. The main idea is to consider the subcode consisting of the codewords that all contain a given point $P$, which can also be applied in the setting 
of (full) flag codes:  
\begin{Proposition}
  \label{prop_johnson_point} 
  If $v\ge 2$, then $A_q^f(v,d)\le \gaussm{v}{1}{q}\cdot A_q^f(v-1,d)$.
\end{Proposition}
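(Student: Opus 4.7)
The plan is to use a standard quotient/averaging argument analogous to the Johnson bound for constant dimension codes. Let $\mathcal{C}$ be a full flag code in $\mathbb{F}_q^v$ with $\fdist(\mathcal{C}) \ge d$. Since every $\Lambda = (W_1,\dots,W_{v-1}) \in \mathcal{C}$ has a unique point $W_1$, the code is partitioned as $\mathcal{C} = \bigsqcup_{P \in \gauss{v}{1}{q}} \mathcal{C}_P$, where $\mathcal{C}_P := \{\Lambda \in \mathcal{C} \mid W_1 = P\}$. It therefore suffices to prove $\#\mathcal{C}_P \le A_q^f(v-1,d)$ for each point $P$.

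For a fixed $P$, I would exhibit an injection from $\mathcal{C}_P$ into a full flag code in the quotient space $\mathbb{F}_q^v/P \cong \mathbb{F}_q^{v-1}$ having minimum Grassmann distance at least $d$. Concretely, send $\Lambda = (P, W_2,\dots,W_{v-1}) \mapsto \overline{\Lambda} := (W_2/P,\dots,W_{v-1}/P)$; since $\dim(W_i/P) = i-1$ for $i = 2,\dots,v-1$, this is a full flag in $\mathbb{F}_q^{v-1}$. Injectivity is immediate from the correspondence theorem.

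The key point is that the Grassmann distance is preserved. For two distinct $\Lambda, \Lambda' \in \mathcal{C}_P$ with $\Lambda' = (P,W_2',\dots,W_{v-1}')$, we have $P \le W_i \cap W_i'$ for every $i \ge 2$, so
\[
 \dim\!\left((W_i/P) \cap (W_i'/P)\right) = \dim\!\left((W_i \cap W_i')/P\right) = \dim(W_i \cap W_i') - 1,
\]
which yields $\idist(W_i/P, W_i'/P) = (i-1) - (\dim(W_i \cap W_i') - 1) = \idist(W_i,W_i')$. Combined with $\idist(W_1,W_1') = 0$, this gives $\fdist(\overline{\Lambda},\overline{\Lambda'}) = \fdist(\Lambda,\Lambda') \ge d$. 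Hence the image is a full flag code in $\mathbb{F}_q^{v-1}$ of minimum Grassmann distance at least $d$, so $\#\mathcal{C}_P \le A_q^f(v-1,d)$.

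Summing over all $\gaussm{v}{1}{q}$ points $P$ gives $\#\mathcal{C} = \sum_P \#\mathcal{C}_P \le \gaussm{v}{1}{q} \cdot A_q^f(v-1,d)$, as claimed. There is no real obstacle here: the argument is a clean partition-plus-quotient, and the only thing to verify carefully is that quotienting by $P$ preserves every intersection dimension (hence every $\idist$) at levels $i \ge 2$, which I did above.
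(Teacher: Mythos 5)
Your proof is correct and takes essentially the same approach as the paper: partition $\mathcal{C}$ by the common point $W_1=P$, then reduce each part $\mathcal{C}_P$ to a full flag code in a $(v-1)$-dimensional space while preserving the Grassmann distance. The only cosmetic difference is that you realize the reduction via the quotient $\mathbb{F}_q^v/P$, whereas the paper writes $W_i=\langle P,U_{i-1}\rangle$ and works with the complements $U_{i-1}$; your quotient formulation is arguably the cleaner way to see that the reduced sequence is automatically a nested full flag and that all intersection dimensions drop by exactly~1.
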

\begin{proof}
  Let $\mathcal{C}$ be a full flag code in $\mathbb{F}_q^v$ with minimum Grassmann distance $d$. If two different codewords 
  $\Lambda=\left(W_1,\dots,W_{v-1}\right)$ and $\Lambda'=\left(W_1',\dots,W_{v-1}'\right)\in\mathcal{C}$ satisfy $W_1=W_1'=P$ for some point $P\le \mathbb{F}_q^v$,
  then we can write $W_i=\left\langle P,U_{i-1}\right\rangle$ and $W_i'=\left\langle P,U_{i-1}'\right\rangle$ for all $2\le i\le v-1$, 
  where $\dim(U_i)=\dim(U_i')=i$ for all $1\le i\le v-2$. Now, observe that $\fdist(\Lambda,\Lambda')=$ 
  $$
    \sum_{i=1}^{v-1}\left(i-\dim(W_i\cap W_i')\right)=\sum_{i=1}^{v-2}\left(i-\dim(U_i\cap U_i')\right)
    =\fdist\Big(\left(U_1,\dots,U_{v-2}\right),\left(U_1',\dots,U_{v-2}'\right)\Big),
  $$
  so that $\#\left\{\left(W_1,\dots,W_{v-1}\right)\in\mathcal{C}\mid W_1=P\right\}\le A_q^f(v-1,d)$.
\end{proof}

\begin{Corollary}
  \label{cor_a_6_6_j}
   $$
     A_q^f(6,6)\le \gaussm{6}{1}{q}\cdot \left(q^3+1\right)=q^8 + q^7 + q^6 + 2q^5 + 2q^4 + 2q^3 + q^2 + q + 1
   $$
\end{Corollary}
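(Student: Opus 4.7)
The plan is to apply Proposition~\ref{prop_johnson_point} with $v=6$ in order to reduce the bound on $A_q^f(6,6)$ to a bound on $A_q^f(5,6)$, and then to evaluate $A_q^f(5,6)$ exactly using Proposition~\ref{prop_a_max_distance}.

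First I would observe that, writing $v=2k+1$ with $k=2$, we have $d = 6 = k^2+k = \lfloor (5/2)^2\rfloor$, so the minimum distance is precisely the maximum attainable Grassmann distance in $\mathbb{F}_q^5$. Consequently Proposition~\ref{prop_a_max_distance} applies and yields the exact value $A_q^f(5,6) = q^{k+1}+1 = q^3+1$.

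Next I would invoke Proposition~\ref{prop_johnson_point} directly with $v=6$, which gives
$$
  A_q^f(6,6) \le \gaussm{6}{1}{q}\cdot A_q^f(5,6) = \gaussm{6}{1}{q}\cdot (q^3+1).
$$
The claimed polynomial expression then follows by expanding $\gaussm{6}{1}{q} = q^5+q^4+q^3+q^2+q+1$ and multiplying by $q^3+1$.

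There is no real obstacle here: the corollary is just the composition of the two previously established results. The only care needed is verifying that the hypothesis of Proposition~\ref{prop_a_max_distance} (namely $k\ge 2$ in the odd case) is satisfied, which it is since $k=2$.
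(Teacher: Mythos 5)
Your proof is correct and follows exactly the same route as the paper: apply Proposition~\ref{prop_johnson_point} with $v=6$ to reduce to $A_q^f(5,6)$, then evaluate $A_q^f(5,6)=q^3+1$ via Proposition~\ref{prop_a_max_distance} with $k=2$. The extra check that $k\ge 2$ holds is a nice touch, and the polynomial expansion is correct.
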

\begin{proof}
  Since $A_q^f(5,6)=q^3+1$, see Proposition~\ref{prop_a_max_distance}, the stated upper bound follows from Proposition~\ref{prop_johnson_point}.
\end{proof}
Note that Corollary~\ref{cor_a_6_6_j} improves upon Proposition~\ref{prop_a_6_6}. Moreover, in all cases where $d$ is small enough, so that $A_q^f(v-1,d)>1$,    
the so far stated upper bounds are indeed implied by Proposition~\ref{prop_johnson_point}. \highlight{We remark, that with respect to upper bounds for 
$A_q^i(v,d;k)$, where $d<k$, almost all of the tightest known upper bounds are given by either the Johnson bound or a slight improvement based on divisible codes, see 
\cite[Theorem 12]{kiermaier2020lengths}. The only two exceptions are given by $(v,d;k)=(6,2;3)$, $(8,3;4)$ in the binary case $q=2$ and obtained via exhaustive integer 
linear programming computations. However, it is not clear if similar techniques may result in strict improvements for full flag codes. For non-full flag codes we refer to 
Footnote~\ref{fn_improved_johnson}.}  

For the cases where the minimum Grassmann distance $d$ is so large that it violates the condition of Proposition~\ref{prop_johnson_point}, we state: 
\begin{Proposition}
  \label{prop_cdc}
  Let $r=\alpha\cdot e_i$ with $\mathbf{0}\le r\le m(v)$ and $d>\sum_{i=1}^{v-1} \left(m(v)_i-\overline{r}_i\right)$, where $\alpha\in\mathbb{N}_{>0}$ and 
  $e_i$ denotes the $i$th unit vector ($1\le i\le v-1$). Then, we have $A_q^f(v,d)\le A_q^i(v,m(v)_i-r_i+1;i)$, where $m(v)_i-r_i+1=\min\{i,v-i\}-r_i+1$.
\end{Proposition}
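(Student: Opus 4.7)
The plan is to project each full flag in $\mathcal{C}$ to its $i$-th component and show that the resulting set of $i$-spaces is a constant dimension code with minimum injection distance at least $m(v)_i-r_i+1$. Let $\mathcal{C}$ be a full flag code in $\mathbb{F}_q^v$ with minimum Grassmann distance $d$ and set $\mathcal{C}_i:=\{W_i\mid (W_1,\dots,W_{v-1})\in\mathcal{C}\}\subseteq\gauss{v}{i}{q}$. The key claim is that any two distinct flags $\Lambda=(W_1,\dots,W_{v-1})$ and $\Lambda'=(W_1',\dots,W_{v-1}')$ in $\mathcal{C}$ satisfy $\idist(W_i,W_i')\ge m(v)_i-r_i+1$, since then the projection $\Lambda\mapsto W_i$ is injective and $\mathcal{C}_i$ attains the required minimum injection distance, yielding $|\mathcal{C}|=|\mathcal{C}_i|\le A_q^i(v,m(v)_i-r_i+1;i)$.

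To establish the claim I would argue by contradiction. Assume $\dim(W_i\cap W_i')\ge i-m(v)_i+r_i=u_i$, with $u_j$ as in Definition~\ref{def_r_bar}. For every $j$ the trivial inequality $\dim(W_j+W_j')\le v$ gives $\dim(W_j\cap W_j')\ge\max\{2j-v,0\}=u_j$ whenever $j\neq i$, and Lemma~\ref{lemma_intersection_propagation}, applied to the chains $W_1<\dots<W_{v-1}$ and $W_1'<\dots<W_{v-1}'$, propagates the bound at position $i$ to give $\dim(W_j\cap W_j')\ge u_i-2(i-j)$ for $j\le i$ and $\dim(W_j\cap W_j')\ge u_i$ for $j\ge i$. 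Taking the maximum of all these bounds at each $j$ reproduces exactly the recursion defining $\overline{u}_j$, so $\dim(W_j\cap W_j')\ge\overline{u}_j$ for every $1\le j\le v-1$.

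Since $\overline{u}_1\le\dots\le\overline{u}_{v-1}$, Lemma~\ref{lemma_chain_decrease} applied to the chain $W_1\cap W_1'\le\dots\le W_{v-1}\cap W_{v-1}'$ produces a weakly increasing chain $U_1'\le\dots\le U_{v-1}'$ with $\dim(U_j')=\overline{u}_j$ and $U_j'\le W_j\cap W_j'$ for all $j$. Restricting to the indices $j\in\{j\mid\overline{r}_j>0\}$, Corollary~\ref{cor_clique} applied to $\overline{r}$ -- which is valid by the hypothesis $d>\sum_{j=1}^{v-1}(m(v)_j-\overline{r}_j)$ -- produces a clique of $\mathcal{G}_{v,d,q}$ containing both $\Lambda$ and $\Lambda'$, whence $\fdist(\Lambda,\Lambda')<d$, contradicting the minimum distance of $\mathcal{C}$. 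The main obstacle I anticipate is the bookkeeping step in which the various propagated lower bounds on $\dim(W_j\cap W_j')$ are matched against the explicit recursion for $\overline{u}_j$; once this identification is checked, the remainder is a direct appeal to Corollary~\ref{cor_clique} and the definition of $A_q^i(v,m(v)_i-r_i+1;i)$.
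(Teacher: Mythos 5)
Your proof is correct and follows the same route as the paper's: project each codeword to its $i$-th component and show, via the clique structure associated with $\overline{r}$, that the resulting $i$-spaces have pairwise injection distance at least $m(v)_i-r_i+1$. The only difference is that the paper simply cites Lemma~\ref{lemma_r_bar} together with Corollary~\ref{cor_clique} to get this, whereas you re-derive the relevant instance of Lemma~\ref{lemma_r_bar} for $r=\alpha\cdot e_i$ by hand from Lemma~\ref{lemma_intersection_propagation} and Lemma~\ref{lemma_chain_decrease} — work that the lemma already packages for you.
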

\begin{proof}
  Let $\mathcal{C}$ be a full flag code in $\mathbb{F}_q^v$ with minimum Grassmann distance $d$. From Corollary~\ref{cor_clique} and Lemma~\ref{lemma_r_bar} we conclude 
  $\idist(W_i,W_i')\le m(v)_i-r_i+1$ for each pair of codewords $\left(W_1,\dots,W_{v-1}\right)$ and $\left(W_1',\dots,W_{v-1}'\right)$ in $\mathcal{C}$, so that
  $$
    \#\mathcal{C}=\#\left\{W_i\mid \left(W_1,\dots,W_{v-1}\right) \in\mathcal{C}\right\}\le A_q^i(v,m(v)_i-r_i+1;i).
  $$
\end{proof}

We can e.g.\ conclude Proposition~\ref{prop_a_max_distance} from Proposition~\ref{prop_cdc}. 

\begin{Corollary}
  \label{cor_cdc}
  For $0\le \delta<\left\lfloor v/2\right\rfloor$ we have 
  $A_q^f(v,d_{max}-\delta)\le A_q^i(v,k;k)$, where $k=\left\lfloor v/2\right\rfloor-\delta$ and $d_{\max}=\sum_{i=1}^{v-1} m(v)_i=\left\lfloor(v/2)^2\right\rfloor$.
\end{Corollary}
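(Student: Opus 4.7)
The plan is to apply Proposition~\ref{prop_cdc} with the specific choice $i=k$ and $\alpha=1$, that is, $r=e_k$ (the $k$th unit vector). Under the hypothesis $0\le\delta<\lfloor v/2\rfloor$, we have $1\le k\le\lfloor v/2\rfloor$, so $m(v)_k=\min\{k,v-k\}=k$; in particular, $\mathbf{0}\le r\le m(v)$, and the bound of Proposition~\ref{prop_cdc} reads $A_q^f(v,d)\le A_q^i(v,m(v)_k-r_k+1;k)=A_q^i(v,k;k)$, which is the desired conclusion.

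The only remaining task is to verify the distance condition $d>\sum_{j=1}^{v-1}(m(v)_j-\overline{r}_j)$ required by Proposition~\ref{prop_cdc}. Since $\sum_j m(v)_j=d_{\max}$ and $d=d_{\max}-\delta$, this reduces to showing $\sum_{j=1}^{v-1}\overline{r}_j\ge\delta+1$.

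To this end I would unfold Definition~\ref{def_r_bar}. For $r=e_k$ we obtain $u_k=1$ (using $2k\le v$) and $u_j=\max\{2j-v,0\}$ for $j\ne k$. A short case analysis, splitting according to whether $j<k$, $k\le j\le\lfloor v/2\rfloor$, or $j>\lfloor v/2\rfloor$, should yield
$$
  \overline{u}_j=\begin{cases}\max\{2j-v,0\}&\text{if }j<k\text{ or }j>\lfloor v/2\rfloor,\\ 1&\text{if }k\le j\le\lfloor v/2\rfloor.\end{cases}
$$
Using $\overline{r}_j=\overline{u}_j-j+m(v)_j=\overline{u}_j-\max\{2j-v,0\}$, this gives $\overline{r}_j=1$ exactly on the interval $k\le j\le\lfloor v/2\rfloor$ and $\overline{r}_j=0$ elsewhere, so $\sum_j\overline{r}_j=\lfloor v/2\rfloor-k+1=\delta+1$, as required.

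The main (mild) obstacle is the case analysis for $\overline{u}_j$, in particular ruling out that the shifted values $u_i-2(i-j)$ coming from indices $i>v/2$ inflate the maximum beyond what is claimed. The key observation here is that $u_i-2(i-j)=(2i-v)-2(i-j)=2j-v$ for every $i>v/2$, so these contributions are bounded above by $\max\{2j-v,0\}$ and never dominate the maximum. Together with the fact that $u_k=1$ forces $\overline{u}_j\ge 1$ as soon as $j\ge k$, this pins down $\overline{u}_j$ in each regime; Proposition~\ref{prop_cdc} then yields the corollary.
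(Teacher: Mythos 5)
Your proof is correct and follows the same route as the paper: apply Proposition~\ref{prop_cdc} with $r=e_k$, where $k=\lfloor v/2\rfloor-\delta$, after checking that $\overline{r}=\sum_{i=k}^{\lfloor v/2\rfloor} e_i$ has $\delta+1$ ones, so the hypothesis $d>\sum_i\left(m(v)_i-\overline{r}_i\right)$ is satisfied. The paper writes $r=e_{\hat v}$ with $\hat v=\lfloor v/2\rfloor$, but then asserts exactly the $\overline{r}$ that your $r=e_k$ produces and uses it to derive $A_q^i(v,k;k)$, so the subscript in the paper is a small slip and your choice is the intended one; your explicit case analysis of $\overline{u}_j$ simply spells out the paper's ``we can easily check'' step.
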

\begin{proof}
  Let $\hat{v}=\left\lfloor v/2\right\rfloor$ and $r=e_{\hat{v}}$. We can easily check that $\overline{r}=\sum_{i=\hat{v}-\delta}^{\hat{v}} e_i$, i.e., $\overline{r}$ 
  consists of $\delta+1$ ones. Thus, we can apply Proposition~\ref{prop_cdc}. 
\end{proof}

Based on the recursive application of Proposition~\ref{prop_johnson_point} and Proposition~\ref{prop_cdc} we can state a general explicit upper 
bound for $A_q^f(v,d)$ if we only focus on the leading coefficient:  
\begin{Proposition}
  \label{prop_asymptotic_upper_bound}
  If $d\ge 1$ and $v\ge \left\lceil 2\sqrt{d}\right\rceil\ge 2$, then we have
  $$A_q^f(v,d)\le q^\beta+O\!\left(q^{\beta-1}\right),$$
  where $\beta=\frac{v(v-1)-\hat{v}(\hat{v}-1)}{2}+\hat{v}-d+\left\lfloor(\hat{v}-1)^2/4\right\rfloor$ and $\hat{v}=\left\lceil 2\sqrt{d}\right\rceil$.
\end{Proposition}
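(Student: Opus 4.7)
The strategy is to combine Proposition~\ref{prop_johnson_point}, which lowers the ambient dimension, with Corollary~\ref{cor_cdc}, which controls $A_q^f(\hat{v},d)$ via a partial spread number. Iterating Proposition~\ref{prop_johnson_point} a total of $v-\hat{v}$ times, once for each ambient dimension in $\{\hat{v}+1,\dots,v\}$, gives
$$
  A_q^f(v,d)\le \prod_{i=\hat{v}+1}^{v}\gaussm{i}{1}{q}\cdot A_q^f(\hat{v},d).
$$
Since $\gaussm{i}{1}{q}=q^{i-1}+O(q^{i-2})$, the displayed product equals $q^{\beta_1}+O(q^{\beta_1-1})$, where
$$
  \beta_1:=\sum_{i=\hat{v}+1}^{v}(i-1)=\frac{v(v-1)-\hat{v}(\hat{v}-1)}{2},
$$
which is exactly the first summand appearing in $\beta$.

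Next I apply Corollary~\ref{cor_cdc} to $A_q^f(\hat{v},d)$, setting $\delta:=\lfloor(\hat{v}/2)^2\rfloor-d$ and $k:=\lfloor\hat{v}/2\rfloor-\delta$. The hypothesis $\hat{v}=\lceil 2\sqrt{d}\rceil$ is equivalent to $(\hat{v}-1)^2<4d\le\hat{v}^2$; the right inequality yields $\delta\ge 0$, and a short parity split (treating $\hat{v}=2m$ and $\hat{v}=2m+1$ separately) shows that the left inequality forces $\delta<\lfloor\hat{v}/2\rfloor$. Hence Corollary~\ref{cor_cdc} applies and gives $A_q^f(\hat{v},d)\le A_q^i(\hat{v},k;k)$. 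The standard anticode bound $A_q^i(\hat{v},k;k)\le (q^{\hat{v}}-1)/(q^k-1)=q^{\hat{v}-k}+O(q^{\hat{v}-k-1})$ then furnishes a controlled bound for this factor.

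Multiplying the two contributions yields $A_q^f(v,d)\le q^{\beta_1+\hat{v}-k}+O(q^{\beta_1+\hat{v}-k-1})$. To finish, it suffices to verify the identity
$$
  \hat{v}-k=\lceil\hat{v}/2\rceil+\delta=\hat{v}-d+\lfloor(\hat{v}-1)^2/4\rfloor,
$$
which reduces to $\lfloor(\hat{v}/2)^2\rfloor-\lfloor(\hat{v}-1)^2/4\rfloor=\lfloor\hat{v}/2\rfloor$; this is immediate by splitting on the parity of $\hat{v}$ (both sides equal $m$ whether $\hat{v}=2m$ or $\hat{v}=2m+1$). Consequently $\beta_1+(\hat{v}-k)=\beta$, proving the claim. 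The argument has no conceptual obstacle beyond the two parity-driven floor identities; careful bookkeeping of dominant $q$-powers is the only real hurdle.
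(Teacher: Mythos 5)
Your proposal is correct and follows essentially the same route as the paper's own proof: iterate Proposition~\ref{prop_johnson_point} down to ambient dimension $\hat{v}$, apply Corollary~\ref{cor_cdc} to reduce $A_q^f(\hat{v},d)$ to a partial spread bound $A_q^i(\hat{v},k;k)\le(q^{\hat{v}}-1)/(q^k-1)$, and collect leading $q$-powers. You are somewhat more explicit than the paper in verifying the hypotheses of Corollary~\ref{cor_cdc} (the two parity-split floor identities showing $0\le\delta<\lfloor\hat{v}/2\rfloor$ and the reconciliation of your $k$ with the paper's $k=d-\lfloor(\hat{v}-1)^2/4\rfloor$), but the argument is the same.
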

\begin{proof}
  Applying Proposition~\ref{prop_johnson_point} $v-\hat{v}\ge 0$ times gives
  $$
    A_q^f(v,d)\le \left(\prod_{i=\hat{v}+1}^{v} \gaussm{i}{1}{q}\right)\cdot A_q^f(\hat{v},d),
  $$  
  so that
  $$
    A_q^f(v,d)\le \left(q^\alpha+O(q^{\alpha-1})\right)\cdot A_q^f(\hat{v},d),   
  $$
  where $\alpha=\frac{v(v-1)-\hat{v}(\hat{v}-1)}{2}$, since $\gaussm{i}{1}{q}=q^{i-1}+O(q^{i-2})$ and $\sum_{i=\hat{v}+1}^v (i-1)=\alpha$. 
  Now note that $\hat{v}=\left\lceil 2\sqrt{d}\right\rceil$ implies the existence of an integer $\delta$ satisfying  
  $0\le \delta<\left\lfloor \hat{v}/2\right\rfloor$ and $d=d_{max}-\delta$, i.e., we can apply Corollary~\ref{cor_cdc}. So, from  
  Corollary~\ref{cor_cdc} we conclude
  $$
    A_q^f(\hat{v},d)\le A_q^i(\hat{v},k;k)\le\gaussm{\hat{v}}{1}{q}/\gaussm{k}{1}{q}\le q^{\hat{v}-k}+O\!\left(q^{\hat{v}-k-1}\right),
  $$
  where $k=d-\left\lfloor(\hat{v}-1)^2/4\right\rfloor$. Thus, we have $A_q^f(v,d)\le q^\beta+O\!\left(q^{\beta-1}\right)$.
\end{proof}
We remark that $A_q(f,d)=1$ if $v< \left\lceil 2\sqrt{d}\right\rceil$, since the maximum possible distance is violated. 
 
In Table~\ref{table_asymptotic_beta} we list the values of $\beta$ in Proposition~\ref{prop_asymptotic_upper_bound} for $v\le 7$. 
\highlight{In Table~\ref{table_asymptotic_sphere_packing} and Table~\ref{table_asymptotic_sphere_covering} we can see that the sphere packing 
and the sphere covering bound, mentioned in Section~\ref{sec_preliminaries}, yield larger exponents in several instances $(v,d)$.} 

\begin{table}[htp]
  \begin{center}
    \begin{tabular}{rrrrrrrrrrrrr}
    \hline
    $v/d$ & 1 & 2 & 3 & 4 & 5 & 6 & 7 & 8 & 9 & 10 & 11 & 12\\
    \hline
    2 &  1 \\
    3 &  3 &  2 \\
    4 &  6 &  5 &  3 &  2 \\
    5 & 10 &  9 &  7 &  6 &  4 &  3 \\
    6 & 15 & 14 & 12 & 11 &  9 &  8 &  5 &  4 & 3 \\ 
    7 & 21 & 20 & 18 & 17 & 15 & 14 & 11 & 10 & 9 & 6 & 5 & 4 \\  
    \hline
    \end{tabular}
    \caption{Values of $\beta$ in Proposition~\ref{prop_asymptotic_upper_bound}, i.e., $A_q^f(v,d)\le q^\beta+O\!\left(q^{\beta-1}\right)$.}
    \label{table_asymptotic_beta}
  \end{center}
\end{table}

\begin{table}[htp]
  \begin{center}
    \begin{tabular}{rrrrrrrrrrrrr}
    \hline
    $v/d$ & 1 & 2 & 3 & 4 & 5 & 6 & 7 & 8 & 9 & 10 & 11 & 12\\
    \hline
    2 &  1 \\
    3 &  3 &  3 \\
    4 &  6 &  6 &  5 &  5 \\
    5 & 10 & 10 &  9 &  9 &  7 &  7 \\
    6 & 15 & 15 & 14 & 14 & 12 & 12 & 10 & 10 &  8 \\ 
    7 & 21 & 21 & 20 & 20 & 18 & 18 & 16 & 16 & 14 & 12 & 12 & 10 \\  
    \hline
    \end{tabular}
    \caption{Exponents $e$ such \highlight{that} the sphere packing bound for $A_q^f(v,d)$ is $\Theta\!\left(q^e\right)$.}
    \label{table_asymptotic_sphere_packing}
  \end{center}
\end{table}

\begin{table}[htp]
  \begin{center}
    \begin{tabular}{rrrrrrrrrrrrr}
    \hline
    $v/d$ & 1 & 2 & 3 & 4 & 5 & 6 & 7 & 8 & 9 & 10 & 11 & 12\\
    \hline
    2 &  1 \\
    3 &  3 &  2 \\
    4 &  6 &  5 &  3 &  1 \\
    5 & 10 &  9 &  7 &  5 &  3 &  2 \\
    6 & 15 & 14 & 12 & 10 &  8 &  6 &  5 & 3 & 1 \\ 
    7 & 21 & 20 & 18 & 16 & 14 & 12 & 10 & 9 & 7 & 5 & 3 & 2 \\  
    \hline
    \end{tabular}
    \caption{Exponents $e$ such \highlight{that} the sphere covering bound for $A_q^f(v,d)$ is $\Theta\!\left(q^e\right)$.}
    \label{table_asymptotic_sphere_covering}
  \end{center}
\end{table}

\section{Bounds for non-full flags and other variants}
\label{sec_non_full}

In this section we want to merely consider a few examples in order to shed some light on the general picture.

\begin{Example}
  \label{ex_a_2_f_6_5_234}
  We can easily generalize Definition~\ref{def_r_bar} and Theorem~\ref{thm_anticode_bound} to the situation $T\subsetneq \{1,\dots,v-1\}$. For a flag code $\mathcal{C}$ 
  in $\mathbb{F}_2^6$ of type $T=\{2,3,4\}$ \highlight{and minimum distance at least $5$} we obtain :
  \begin{itemize} 
    \item $\overline{(2,0,0)}=(2,2,0)$ $\leadsto$ $\#\mathcal{C}\le\gaussm{6}{2}{2}=651$;
    \item $\overline{(1,2,0)}=(1,2,0)$ $\leadsto$ $\#\mathcal{C}\le\frac{\gaussm{6}{1}{2}\cdot\gaussm{5}{1}{2}}{\gaussm{2}{1}{2}\cdot\gaussm{2}{1}{2}}=217$;
    \item $\overline{(0,3,0)}=(1,3,1)$ $\leadsto$ $\#\mathcal{C}\le\gaussm{6}{3}{2}=1395$;
    \item $\overline{(1,0,1)}=(1,1,1)$ $\leadsto$ $\#\mathcal{C}\le\frac{\gaussm{6}{1}{2}\cdot\gaussm{5}{2}{2}}{\gaussm{2}{1}{2}\cdot\gaussm{3}{1}{2}}=465$,
  \end{itemize}
  so that $A_2^f(6,5;\{2,3,4\})\le 217$.   
\end{Example}

The vector $r=(1,0,1)$ with $\overline{r}=(1,1,1)$ is of special interest with respect to the relation of Lemma~\ref{lemma_clique} and Corollary~\ref{cor_clique}, where 
the latter is the one used in \highlight{Theorem~\ref{thm_anticode_bound}.} 
Going along Corollary~\ref{cor_clique} we would consider 
the flag of a point $P$ and a plane $E$ with $P\le E$ such that there is at most one codeword $\left(W_2,W_3,W_4\right)$ with $P\le W_2$ and $E\le W_4$. Using 
the more general Lemma~\ref{lemma_clique}, we can also consider the flag of a point $P$ and a $5$-space $K$ with $P\le K$ to conclude that there is at most one codeword 
$\left(W_2,W_3,W_4\right)$ with $P\le W_2$ and $W_4\le K$. There are $\gaussm{6}{1}{2}\cdot\gaussm{5}{1}{2}$ such flags $P\le K$ in total and for each fixed 
codeword $\left(W_2,W_3,W_4\right)$ there are $\gaussm{2}{1}{2}\cdot\gaussm{2}{1}{2}$ flags $P\le K$ with $P\le W_2$ and $W_4\le K$. Thus, $A_2^f(6,5;\{2,3,4\})\le 
\frac{63\cdot 31}{3\cdot 3}=217$.    

The underlying idea of Proposition~\ref{prop_johnson_point} can also be generalized easily, i.e., if $\mathcal{C}$ is a flag code in $\mathbb{F}_2^6$ of 
type $T=\{2,3,4\}$ and minimum Grassmann distance $d=5$, then given a point $P$ the set
$$
  \mathcal{C}_P:=\left\{\left(W_2,W_3,W_4\right)\in\mathcal{C}\mid P\le W_2\right\}
$$ 
corresponds to a flag code in $\mathbb{F}_2^5$ of type $\{1,2,3\}$ and minimum Grassmann distance $d=5$. Thus, $\#\mathcal{C}_P\le A_2^f(5,5;\{1,2,3\})$ 
and $A_2^f(6,5;\{2,3,4\})\le \frac{\gaussm{6}{1}{2}}{\gaussm{2}{1}{2}}\cdot A_2^f(5,5;\{1,2,3\})$. For $A_2^f(5,5;\{1,2,3\})$ we observe that the $2$-spaces 
in the middle layer of the codewords have to give a partial line spread in $\mathbb{F}_2^5$, so that $A_2^f(5,5;\{1,2,3\})\le 9$ and 
$A_2^f(6,5;\{2,3,4\})\le\frac{63}{3}\cdot 9=189$, which improves upon the previously stated upper bounds.    

\begin{Example}
  Let us consider some upper bounds for $A_2^f(7,3;\{3,4\})$\highlight{:}
  \begin{itemize} 
    \item $\overline{(3,0)}=(3,2)$ $\leadsto$ $A_2^f(7,3;\{3,4\})\le \highlight{\gaussm{7}{3}{2}}=11811$;
    \item $\overline{(2,2)}=(2,2)$ $\leadsto$ $A_2^f(7,3;\{3,4\})\le\frac{\gaussm{7}{2}{2}\cdot\gaussm{5}{1}{2}}{\gaussm{3}{2}{2}\cdot\gaussm{2}{1}{2}}=3937$;
    \item $\overline{(0,3)}=(3,3,)$ $\leadsto$ $A_2^f(7,3;\{3,4\})\le \highlight{\gaussm{7}{3}{2}}=11811$.
  \end{itemize} 
  Alternatively, by considering all codewords $\left(W_3,W_4\right)$, where $W_3$ contains a fixed point $P$, cf.\ Proposition~\ref{prop_johnson_point}, 
  we obtain
  \begin{equation}
    \label{ie_special_734}
    A_2^f(7,3;\{3,4\})\le \highlight{\frac{\gaussm{7}{1}{2}}{\gaussm{3}{1}{2}}\cdot A_2^f(6,3;\{2,3\})}.
  \end{equation}
  For $A_2^f(6,3;\{2,3\})$ we can use the argument again and obtain $A_2^f(6,3;\{2,3\})\le \frac{63}{3}\cdot A_2^f(5,3;\{1,2\})$. Since the lines of the second layer 
  of the codewords have to give a partial line spread in $\mathbb{F}_2^5$, we have $A_2^f(5,3;\{1,2\})\le 9$ (indeed, we have $A_2^f(5,3;\{1,2\})= 9$), so that 
  $A_2^f(6,3;\{2,3\})\le 189$. Thus, Inequality~(\ref{ie_special_734}) yields $A_2^f(7,3;\{3,4\})\le 3429$, which improves upon the previously stated upper bounds. 
  \highlight{In the context of non-full flag codes the improvement of the Johnson bound for constant dimension codes, see \cite[Theorem 12]{kiermaier2020lengths}, 
  might be adjusted and applied successfully.}\footnote{\label{fn_improved_johnson}Let us assume $A_2^f(6,3;\{2,3\})\le 185$ for a moment. Inequality~(\ref{ie_special_734}) 
  then would yield $A_2^f(7,3;\{3,4\})\le \frac{127}{7}\cdot 185=3356 +\frac{3}{7}$. 
Of course this can be rounded down to $3356$, since $A_2^f(7,3;\{3,4\})$ is an integer. However, as in the case of constant dimension codes the rounding of the 
Johnson bound can be improved using the theory of $q^r$-divisible codes, see \cite{kiermaier2020lengths}. More concretely, for each codeword $\left(W_3,W_4\right)$ we 
just consider the plane $W_3$.  Since we assume $A_2^f(6,3;\{2,3\})\le 185$ those planes cover each point of $\mathbb{F}_2^7$ at most $185$ times. If the flag code 
has cardinality $3356$ then not every point of $\mathbb{F}_2^7$ can be covered exactly $185$ times, i.e., the missing points correspond to a multiset of points 
of cardinality $3$, which in turn corresponds to a binary linear code of effective length $3$. Since it can be shown that this code has to be $4$-divisible, i.e., 
the weight of every codeword has to be divisible by $4$ and such a code cannot exist, we could strengthen our argument to 
$A_2^f(7,3;\{3,4\})\le 3355$. (A $4$-divisible binary linear code of effective length $10$ indeed exists.) For the details we refer to \cite[Lemma 13(i)]{kiermaier2020lengths} 
and its preparing results and definitions.    
}      
\end{Example}

Another variant is to consider sets of elements of the Cartesian product $\gauss{v}{1}{q}\times \gauss{v}{2}{q}\times\dots\times \gauss{v}{v-1}{q}$ as codes with respect 
to the Grassman distance. By $A_q^c(v,d)$ we denote the corresponding maximum cardinality of such a code with minimum Grassmann distance $d$. Obviously we have 
$A_q^f(v,d)\le A_q^c(v,d)$ and $d\le \left\lfloor(v/2)^2\right\rfloor$. If we replace $\bar{r}$ by $r$ then the modified version of Theorem~\ref{thm_anticode_bound} 
holds for $A_q^c(v,d)$. As a special case we obtain the same upper for the maximum possible Grassmann distance for $A_q^c(v,\left\lfloor(v/2)^2\right\rfloor)$ as for 
$A_q^f(v,\left\lfloor(v/2)^2\right\rfloor)$, so that:
\begin{Proposition}
  \label{prop_a_max_distance_cardesian}
  For each integer $k\ge 1$ we have
  $$
    A_q^c(2k,k^2)=q^k+1  
  $$
  and for each integer $k\ge 2$ we have
  $$
    A_q^c(2k+1,k^2+k)=q^{k+1}+1.  
  $$
\end{Proposition}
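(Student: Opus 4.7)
The plan is to show that the upper-bound half of Proposition~\ref{prop_a_max_distance} extends verbatim to the Cartesian setting, while the lower bound is free.

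For the lower bound, I would simply note that every full flag code in $\mathbb{F}_q^v$ is, after forgetting the inclusion relations, an element of $\gauss{v}{1}{q}\times\dots\times\gauss{v}{v-1}{q}$, and the Grassmann distance is defined componentwise in both settings. Hence $A_q^c(v,d)\ge A_q^f(v,d)$, and Proposition~\ref{prop_a_max_distance} supplies the bounds $q^k+1$ and $q^{k+1}+1$ respectively.

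For the upper bound, the key observation is that the proof of Proposition~\ref{prop_a_max_distance} never really used the inclusion chain $W_1<\dots<W_{v-1}$; it only exploited that the $k$-th components of two different codewords must be disjoint $k$-spaces. I would first note $\sum_{i=1}^{v-1}m(v)_i = \lfloor (v/2)^2\rfloor$, which equals $k^2$ for $v=2k$ and $k^2+k$ for $v=2k+1$. Consequently, for any two distinct codewords $(W_1,\dots,W_{v-1})$, $(W_1',\dots,W_{v-1}')$ in a code $\mathcal{C}\subseteq\gauss{v}{1}{q}\times\dots\times\gauss{v}{v-1}{q}$ achieving the maximum possible Grassmann distance, the inequality
$$
  \lfloor (v/2)^2\rfloor \le \fdist(\Lambda,\Lambda') = \sum_{i=1}^{v-1}\idist(W_i,W_i') \le \sum_{i=1}^{v-1} m(v)_i = \lfloor (v/2)^2\rfloor
$$
forces equality in every component. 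In particular, $\idist(W_k,W_k')=k$, i.e., $\dim(W_k\cap W_k')=0$, so the projection $\mathcal{C}\to\gauss{v}{k}{q}$, $(W_1,\dots,W_{v-1})\mapsto W_k$, is injective and its image is a partial $k$-spread in $\mathbb{F}_q^v$.

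From here the counts are standard. For $v=2k$ the image is a $k$-spread, so $\#\mathcal{C}\le A_q^i(2k,k;k)=q^k+1$, and for $v=2k+1$ with $k\ge 2$ we get $\#\mathcal{C}\le A_q^i(2k+1,k;k)=q^{k+1}+1$, cf.\ \cite{beutelspacher1975partial} or \cite{heinlein2016tables}. There is no real obstacle here; I expect the only point worth flagging is that the proof does not use any flag structure at all, which is exactly why the bound is identical for $A_q^f$ and $A_q^c$. (Equivalently, this is the application of the modified Theorem~\ref{thm_anticode_bound} indicated before the proposition statement with $r=e_k$, which the excerpt already points out.)
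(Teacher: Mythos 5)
Your proposal is correct and follows essentially the same route the paper takes: the lower bound is inherited from $A_q^f(v,d)\le A_q^c(v,d)$ together with Proposition~\ref{prop_a_max_distance}, and the upper bound reuses the squeeze argument from the proof of Proposition~\ref{prop_a_max_distance}, which forces $\idist(W_i,W_i')=m(v)_i$ in every component and in particular makes the middle ($k$th) components a partial $k$-spread, bounded by $A_q^i(v,k;k)$. The only small wording slip is that for $v=2k$ the image of the projection is a priori only a \emph{partial} $k$-spread rather than a full one, but this does not affect the bound $\#\mathcal{C}\le A_q^i(2k,k;k)=q^k+1$.
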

\highlight{So, we have $A_q^c(2k,k^2)=A_q^f(2k,k^2)$ and $A_q^c(2k+1,k^2+k)=A_q^f(2k+1,k^2+k)$. Later we will see that $A_q^c(v,d)=A_q^f(v,d)$ is not true in general.}  

Similar as for flag codes we can restrict the possible dimensions of the parts of a codeword to a subset $\emptyset\neq T\subseteq\{1,\dots,v-1\}$, which we call type. 
More precisely, codewords are elements of \highlight{the Cartesian product} $\bigtimes_{t\in T} \gauss{v}{t}{q}$. By $A_q^c(v,d;T)$ we denote the corresponding maximum 
possible cardinality of such a code. For Grassmann distance $d=1$ we have
\begin{equation}
  A_q^c(v,1;T)=\prod_{t\in T} \gaussm{v}{t}{q}   
\end{equation}
and
\begin{equation}
  A_q^c(v,1)=\prod_{t=1}^{v-1} \gaussm{v}{t}{q}.   
\end{equation}

In order to show that $A_q^f(v,d;T)$ and $A_q^c(v,d;T)$ can have different orders of magnitude in terms of the field size $q$ we consider the example $(v,d)=(5,2)$ and 
$T=\{2,3\}$. Since $\fdist\big((L,E),(L,E'))\le 1$ for a line $L$ and two planes $E,E'$ containing $L$, we have
$$
  A_q^f(5,2;\{2,3\})\le \gaussm{5}{2}{q}=q^6+q^5+2q^4+2q^3+2q^2+q+1.
$$  
Next we want to construct a larger lower bound for $A_q^c(5,2;\{2,3\})$ and introduce some necessary notation. For two matrices $A,B\in\mathbb{F}_q^{m\times n}$ we define 
the rank distance $\rdist(A,B):=\rk(A-B)$. A subset $\mathcal{M}\subseteq \mathbb{F}_q^{m\times n}$ is called a rank metric code. 
\begin{Theorem}(see \cite{gabidulin1985theory})
  \label{thm_MRD_size}
  Let $m,n\ge d'$ be positive integers, $q$ a prime power, and $\mathcal{M}\subseteq \mathbb{F}_q^{m\times n}$ be a rank metric
  code with minimum rank distance $d'$. Then, $\# \mathcal{M}\le q^{\max\{n,m\}\cdot (\min\{n,m\}-d'+1)}$.
\end{Theorem}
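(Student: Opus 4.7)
The plan is to establish this as a Singleton-type bound by exhibiting an explicit injective projection of the code into a smaller matrix space. By transposition (rank is invariant under transpose, so $\{A^\top\mid A\in\mathcal{M}\}\subseteq\mathbb{F}_q^{n\times m}$ has the same minimum rank distance $d'$), we may assume without loss of generality that $m\le n$, so that $\min\{m,n\}=m$ and $\max\{m,n\}=n$.

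Next, define the projection $\pi\colon\mathbb{F}_q^{m\times n}\to \mathbb{F}_q^{(m-d'+1)\times n}$ that forgets the last $d'-1$ rows of a matrix. The key observation is that this projection is injective on $\mathcal{M}$: if $A,B\in\mathcal{M}$ satisfy $\pi(A)=\pi(B)$, then $A-B$ has all its nonzero entries concentrated in the last $d'-1$ rows, so $\rk(A-B)\le d'-1$. Since $\mathcal{M}$ has minimum rank distance $d'$, this forces $A=B$.

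From injectivity we immediately obtain
$$
\#\mathcal{M}\le \#\mathbb{F}_q^{(m-d'+1)\times n}=q^{n(m-d'+1)}=q^{\max\{n,m\}\cdot(\min\{n,m\}-d'+1)},
$$
which is the desired bound. The hypothesis $m,n\ge d'$ is used here only to guarantee that $m-d'+1\ge 1$, so that $\pi$ actually maps into a nondegenerate matrix space. There is essentially no major obstacle: the whole argument is a one-line Singleton-style projection. The only minor point is to set up the reduction to $m\le n$ cleanly via transposition, and to verify that the truncation is well defined under the assumption $m\ge d'$.
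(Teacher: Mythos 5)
Your proof is correct, and it is the standard Singleton-type projection argument for the rank-metric bound. Note, however, that the paper itself does not prove Theorem~\ref{thm_MRD_size}: it is stated with the citation ``(see \cite{gabidulin1985theory})'' and treated as a known result, so there is no in-paper proof to compare against. Your argument---reduce to $m\le n$ by transposition, then observe that the truncation map $\pi\colon\mathbb{F}_q^{m\times n}\to\mathbb{F}_q^{(m-d'+1)\times n}$ dropping the last $d'-1$ rows is injective on $\mathcal{M}$ because a matrix supported on $d'-1$ rows has rank at most $d'-1$---is precisely the argument one finds in Gabidulin's paper and in Delsarte's earlier bilinear-forms treatment, so it faithfully reproduces the cited source. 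The only cosmetic remark is that you could also phrase the injectivity step as a direct Singleton puncturing (any two distinct codewords must already differ in the retained block), but this is the same computation.
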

Codes attaining this upper bound are called maximum rank distance (\MRD) codes. They exist for all choices of parameters, which remains true if we restrict 
to linear rank metric codes, see  \cite{gabidulin1985theory}. For e.g.\ $(m,n)=(2,3)$ and $d'=2$ there exists an {\MRD} code $\mathcal{M}^{2,3}$ of cardinality 
$q^3$. For a general $m\times n$ {\MRD} code $\mathcal{M}$ we can associate to each matrix $M\in\mathcal{M}$ the rowspace $\langle(I_{m\times m}|M)\rangle$ of the 
concatenation of the $m\times m$ unit matrix $I_{m\times m}$ and matrix $M$, which is an $m$-dimensional subspace of $\mathbb{F}_q^{m+n}$. The construction of a subspace 
from a matrix is also called \emph{lifting}. If $U=\langle(I_{m\times m}|M)\rangle$ and $W=\langle(I_{m\times m}|M')\rangle$ are two subspaces 
lifted from two matrices, then $\idist(U,W)=\rdist(M,M')$. Thus, $\mathcal{M}^{2,3}$ can be lifted to a set of $q^3$ lines in $\mathbb{F}_q^5$ with pairwise 
injection distance $2$, i.e., a partial line spread. Since $\mathcal{M}^{2,3}$ is linear the $q^6$ $2\times 3$ matrices over $\mathbb{F}_q$ can be partitioned into 
$q^3$ $2\times 3$ {\MRD} codes with minimum rank distance $2$. By lifting we obtain $q^6$ lines $U'_{i,j}$ in $\mathbb{F}_q^5$, where $1\le i\le q^3$ and $1\le j\le q^3$, such that
\begin{eqnarray*}
  \idist\!\left(U'_{i,j},U'_{i',j'}\right)=2 &\text{if}& i\neq i', j=j',\\
  \idist\!\left(U'_{i,j},U'_{i',j'}\right)=1 &\text{if}& j\neq j',\text{ and}\\
  \idist\!\left(U'_{i,j},U'_{i',j'}\right)=0 &\text{if}& i=i', j=j'.
\end{eqnarray*}     
By \highlight{duplicating} this configuration $q^3$ times we obtain $q^3$ lines $U_{i,j,h}$, where $1\le i,j,h\le q^3$, such that  
\begin{eqnarray*}
  \idist\!\left(U_{i,j,h},U_{i',j',h'}\right)=2 &\text{if}& i\neq i', j=j',\\
  \idist\!\left(U_{i,j,h},U_{i',j',h'}\right)=1 &\text{if}& j\neq j',\text{ and}\\
  \idist\!\left(U_{i,j,h},U_{i',j',h'}\right)=0 &\text{if}& i=i', j=j'.
\end{eqnarray*}  
Starting from a $3\times 2$ {\MRD} code $\mathcal{M}^{3\times 2}$ with minimum rank distance $2$ and cardinality $q^3$ we can similarly construct $q^9$ planes 
$W_{i,j,h}$, where $1\le i,j,h\le q^3$ such that 
\begin{eqnarray*}
  \idist\!\left(W_{i,j,h},W_{i',j',h'}\right)=2 &\text{if}& h\neq h', j=j',\\
  \idist\!\left(W_{i,j,h},W_{i',j',h'}\right)=1 &\text{if}& j\neq j',\text{ and}\\
  \idist\!\left(W_{i,j,h},W_{i',j',h'}\right)=0 &\text{if}& h=h', j=j'.
\end{eqnarray*}    
With this we can construct a flag code $\mathcal{C}=\left\{\left(U_{i,j,h},W_{i,j,h}\right)\,:\,1\le i,j,h\le q^3\right\}$ of type $\{2,3\}$ and cardinality $q^9$. 
It can be easily checked that 
$$
  \fdist\Big(\left(U_{i,j,h},W_{i,j,h}\right),\left(U_{i',j',h'},W_{i',j','h}\right)\Big)=
  \idist\Big(U_{i,j,h},U_{i',j',h'}\Big)+\idist\Big(W_{i,j,h},W_{i',j',h'}\Big)\ge 2
$$
if $(i,j,h)\neq (i',j',h')$. Thus, the minimum Grassman distance of $\mathcal{C}$ is at least $2$ and $A_q^c(5,2;\{2,3\})\ge q^9$. By considering the codewords $(L,E)$ 
with a fixed line $L$ and planes $E$ contained in a hyperplane $H$ of $\mathbb{F}_q^5$ we may show $A_q^c(5,2;\{2,3\})\le \gaussm{5}{2}{q}\cdot\gaussm{5}{1}{q}=q^{10}+
O\!\left(q^9\right)$.

We can also extend our construction to a full \highlight{Cartesian product code} for $(v,d)=(5,3)$. To this end let $U_{i,j}^2$ be lines in $\mathbb{F}_q^5$ and $U_{i,j}^3$ be planes in 
$\mathbb{F}_q^5$ for $1\le i,j\le q^4$ such that 
$$
\fdist\Big(\left(U_{i,j}^2,U_{i,j}^3\right),\left(U_{i',j'}^2,U_{i',j'}^3\right)\Big) \ge 2
$$ 
whenever $(i,j)\neq (i',j')$. Since there are $\gaussm{5}{1}{q}\ge 4$ points in $\mathbb{F}_q^5$, we can choose $q^8$ points $U_{i,j}^1$ such that 
$\idist\Big(U_{i,j}^1,U_{i',j'}^1\Big)=1$ if $j\neq j'$ and zero otherwise. Similarly, we can choose $q^8$ \highlight{hyperplanes} $U_{i,j}^4$ such that 
$\idist\Big(U_{i,j}^4,U_{i',j'}^1\Big)=4$ if $i\neq i'$ and zero otherwise. With this we can check that
$$
  \mathcal{C}=\left\{\Big(U_{i,j}^1,U_{i,j}^2,U_{i,j}^3,U_{i,j}^4\Big)\,:\,1\le i,j\le q^4\right\}
$$
is a full flag code in $\mathbb{F}_q^5$ with cardinality $q^8$ and minimum Grassmann distance $3$. Thus, we have $A_q^c(5,3)\ge q^8$, while $A_q^f(5,3)\le q^7+
O\!\left(q^6\right)$.

\section{Exact values and bounds for small parameters}
\label{sec_exact_values_and_bounds}

In this section we summarize the exact values and bounds for $A_q^f(v,d)$, \highlight{where $v\le 6$}, from the previous sections. We start with the known exact formulas that are parametric in $q$   
from Section~\ref{sec_preliminaries}, i.e., for $d=1$, $d=\left\lfloor(v/2)^2\right\rfloor$ 
(Proposition~\ref{prop_a_max_distance}), $(v,d)=(3,2)$ (Propositions~\ref{prop_a_3_2}), and  $(v,d)=(4,3)$ (Propositions~\ref{prop_a_4_3}). 

 \begin{equation}
  A_q^f(2,1)=q+1
\end{equation}

\begin{equation}
  A_q^f(3,1)=\left(q+1\right)\cdot\left(q^2+q+1\right)=q^3 + 2q^2 + 2q + 1
\end{equation}

\begin{equation}
  A_q^f(3,2)=q^2 + q+1
\end{equation}

\begin{equation}
  A_q^f(4,1)=\left(q+1\right)\cdot\left(q^2+q+1\right)\cdot\left(q^3+q^2+q+1\right)=q^6 + 3q^5 + 5q^4 + 6q^3 + 5q^2 + 3q + 1
\end{equation}

\begin{equation}
  A_q^f(4,3)=q^3+q^2 + q+1
\end{equation}

\begin{equation}
  A_q^f(4,4)=q^2 + 1
\end{equation}

\begin{eqnarray}
  A_q^f(5,1)&=&\left(q+1\right)\cdot\left(q^2+q+1\right)\cdot\left(q^3+q^2+q+1\right)\cdot\left(q^4+q^3+q^2+q+1\right)\\ 
  &=&q^{10} + 4q^9 + 9q^8 + 15q^7 + 20q^6 + 22q^5 + 20q^4 + 15q^3 + 9q^2 + 4q + 1\notag
\end{eqnarray}

\begin{equation}
  A_q^f(5,6)=q^3 + 1
\end{equation}

\begin{eqnarray}
  A_q^f(6,1)&=&\left(q\!+\!1\right)\left(q^2\!+\!q\!+\!1\right)\left(q^3\!+\!q^2\!+\!q\!+\!1\right)\left(
  q^4\!+\!q^3\!+\!q^2\!+\!q\!+\!1\right)\left(q^5\!+\!q^4\!+\!q^3\!+\!q^2\!+\!q\!+\!1\right)\\ 
  &=& q^{15} + 5q^{14} + 14q^{13} + 29q^{12} + 49q^{11} + 71q^{10} + 90q^9 + 101q^8 + 101q^7 + 90q^6\notag\\ 
  && + 71q^5 + 49q^4 + 29q^3 + 14q^2 + 5q + 1\notag
\end{eqnarray}

\begin{equation}
  A_q^f(6,9)=q^3 + 1
\end{equation}

We continue with parametric upper bounds. Propositions~\ref{prop_a_4_2}, \ref{prop_a_6_7}, \ref{prop_a_6_8} and Corollary~\ref{cor_a_6_6_j} state
\begin{eqnarray}
  A_q^f(4,2)&\le& q^5 + 2q^4 + 3q^3 + 3q^2 + 2q + 1,\\ 
  A_q^f(6,6)&\le& q^8 + q^7 + q^6 + 2q^5 + 2q^4 + 2q^3 + q^2 + q + 1, \\
  A_q^f(6,7)&\le& q^5+q^4+q^3+q^2+q+1,\text{ and}\\
  A_q^f(6,8)&\le& q^4+q^2+1.
\end{eqnarray}

Next we complete the missing parametric cases $(v,d)$ for $v\le 5$. To this end we use clique constraints corresponding to $\mathcal{V}^{r}_{v,q}$ 
for a suitable reduction vector $r$, i.e., we apply \highlight{Theorem~\ref{thm_anticode_bound}} 
to evaluate the involved cardinalities.

\begin{Proposition}
  \label{prop_a_5_2}
  \begin{equation}
    A_q^f(5,2)\le\gaussm{5}{1}{q}\cdot \gaussm{4}{1}{q}\cdot \gaussm{3}{1}{q}=q^9 + 3q^8 + 6q^7 + 9q^6 + 11q^5 + 11q^4 + 9q^3 + 6q^2 + 3q + 1
  \end{equation}  
\end{Proposition}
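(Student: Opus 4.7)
The plan is to apply Theorem~\ref{thm_anticode_bound} with the reduction vector $r=(1,2,2,0)$, which is the first element listed in $\mathcal{R}_{5,2}$ in the table of Section~\ref{sec_ILP}. First I would verify the two side conditions: that $\mathbf{0}\le r\le m(5)=(1,2,2,1)$, which is immediate, and that $\overline{r}=r$. For the latter, with $u_j=\max\{2j-5,0\}+r_j$ one gets $(u_1,u_2,u_3,u_4)=(1,2,3,3)$, and the saturation step in Definition~\ref{def_r_bar} leaves this vector unchanged (the sequence is already nondecreasing and the backward slopes $u_i-2(i-j)$ never exceed $u_j$), so $\overline{u}=u$ and hence $\overline{r}=r$. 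Then $\sum_{i=1}^{4}(m(5)_i-\overline{r}_i)=0+0+0+1=1<2=d$, which is the required hypothesis of Theorem~\ref{thm_anticode_bound}.

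Next I would plug into the closed-form expression at the end of Theorem~\ref{thm_anticode_bound}. Here $\mathcal{I}=\{1,2,3\}$, so $(k_1,k_2,k_3)=(1,2,3)$ and $(u_{k_1},u_{k_2},u_{k_3})=(1,2,3)$. The numerator equals the total number of flags of type $\{1,2,3\}$ in $\mathbb{F}_q^5$, which by equation~(\ref{eq_count_flags}) is
\[
  \gaussm{5}{1}{q}\cdot\gaussm{4}{1}{q}\cdot\gaussm{3}{1}{q},
\]
and the denominator is $\gaussm{1}{1}{q}\cdot\gaussm{1}{1}{q}\cdot\gaussm{1}{1}{q}=1$. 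Hence the bound reduces to exactly the claimed value.

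Since the calculation is entirely routine once $r$ is chosen, there is no real obstacle; the only thing that takes a moment is the bookkeeping check that $\overline{r}=r$ and that the hypothesis on $d$ is satisfied. For the reader who prefers the direct combinatorial reformulation (which is what Theorem~\ref{thm_anticode_bound} encodes in this case), I would alternatively note the following one-line argument in the style of Proposition~\ref{prop_a_4_2}: if two distinct codewords $\Lambda=(W_1,W_2,W_3,W_4)$ and $\Lambda'=(W_1',W_2',W_3',W_4')$ agree in their first three components, then $\idist(W_i,W_i')=0$ for $i=1,2,3$ and $\idist(W_4,W_4')\le m(5)_4=1$, forcing $\fdist(\Lambda,\Lambda')\le 1<2$; consequently the map $\Lambda\mapsto (W_1,W_2,W_3)$ is injective on $\mathcal{C}$, and by (\ref{eq_count_flags}) the number of flags of type $\{1,2,3\}$ in $\mathbb{F}_q^5$ is precisely $\gaussm{5}{1}{q}\cdot\gaussm{4}{1}{q}\cdot\gaussm{3}{1}{q}$.
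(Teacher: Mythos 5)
Your proposal is correct and uses essentially the same approach as the paper, namely applying Theorem~\ref{thm_anticode_bound} with $r=(1,2,2,0)$. One small remark: you compute $\overline{r}=(1,2,2,0)=r$, whereas the paper's proof asserts $\overline{(1,2,2,0)}=(1,2,2,1)$; working through Definition~\ref{def_r_bar} confirms your value is the right one (the paper's is a typo), though the discrepancy is harmless since in both cases $\sum_{i}(m(5)_i-\overline{r}_i)\le 1<2=d$ and the bound is unchanged. Your closing paragraph, which redoes the bound as a one-line injectivity argument in the style of Proposition~\ref{prop_a_4_2}, is a nice self-contained cross-check that makes the counting transparent without invoking the $\overline{r}$-machinery.
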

\begin{proof}
  Since $\overline{(1,2,2,0)}=(1,2,2,1)$ the stated upper bound is obtained from the clique constraints corresponding to $\mathcal{V}^{1,2,2,0}_{5,q}$, 
  \highlight{i.e.\ we apply Theorem~\ref{thm_anticode_bound}.}
\end{proof}
For $q=2$ prescribing a Singer cycle, i.e., a cyclic group of order $31$, the ILP from Section~\ref{sec_ILP} has an optimal target value of $3069$, while the upper bound of 
Proposition~\ref{prop_a_5_2} yields $A_2^f(5,2)\le 3255$. 

\begin{Proposition}
  \label{prop_a_5_3}
  \begin{equation}
    A_q^f(5,3)\le\gaussm{5}{1}{q}\cdot \gaussm{4}{1}{q}=q^7 + 2q^6 + 3q^5 + 4q^4 + 4q^3 + 3q^2 + 2q + 1
  \end{equation}  
\end{Proposition}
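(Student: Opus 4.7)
The plan is to invoke Theorem~\ref{thm_anticode_bound} with the reduction vector $r=(1,2,0,0)$, which belongs to $\mathcal{R}_{5,3}$ according to the table in Section~\ref{sec_ILP}. This is the $v=5$ analogue of the bound in Proposition~\ref{prop_a_4_2}, and it amounts to fixing the first two members of each flag in the code.

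First I would verify the hypothesis. With $v=5$ one has $m(v)=(1,2,2,1)$, and computing $u_j=\max\{2j-v,0\}+r_j$ for this $r$ yields $u=(1,2,1,3)$; Definition~\ref{def_r_bar} then gives $\overline{u}=(1,2,2,3)$, hence $\overline{r}=(1,2,1,0)$. Thus $\sum_{i=1}^{4}(m(v)_i-\overline{r}_i)=0+0+1+1=2<3=d$, so the condition of Theorem~\ref{thm_anticode_bound} is satisfied.

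Next I would evaluate the quotient. With $\mathcal{I}=\{i:r_i>0\}=\{1,2\}$ and prescribed dimensions $u_1=1$, $u_2=2$, the closed form at the end of Theorem~\ref{thm_anticode_bound} reads
\[
\frac{\#\mathcal{U}}{\#\widehat{\mathcal{U}}}=\frac{\gaussm{5}{1}{q}\cdot\gaussm{4}{1}{q}}{\gaussm{1}{1}{q}\cdot\gaussm{1}{1}{q}}=\gaussm{5}{1}{q}\cdot\gaussm{4}{1}{q},
\]
which, upon expansion, is exactly $q^7+2q^6+3q^5+4q^4+4q^3+3q^2+2q+1$. Alternatively, the same inequality follows by a direct argument in the style of Proposition~\ref{prop_a_4_2}: if two distinct codewords satisfied $W_1=W_1'$ and $W_2=W_2'$, then $W_2\le W_3\cap W_3'$ would force $\idist(W_3,W_3')\le 1$, while $\idist(W_4,W_4')\le m(v)_4=1$ holds automatically, giving $\fdist(\Lambda,\Lambda')\le 2<3$; and the number of pairs $(W_1,W_2)$ with $W_1<W_2$, $\dim(W_1)=1$, $\dim(W_2)=2$ equals $\gaussm{5}{1}{q}\cdot\gaussm{4}{1}{q}$.

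There is no real obstacle here; the only care needed is the routine computation of $\overline{r}$ to confirm that $r=(1,2,0,0)$ is admissible. I note in passing that the competing choice $r=(1,0,2,0)\in\mathcal{R}_{5,3}$ would only yield the looser bound $\gaussm{5}{1}{q}\cdot\gaussm{4}{2}{q}$, so selecting $r=(1,2,0,0)$ is essential for obtaining the tightest anticode bound.
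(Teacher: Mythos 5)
Your proof is correct and follows the paper's approach exactly: the paper also applies Theorem~\ref{thm_anticode_bound} with $r=(1,2,0,0)$, verifying $\overline{r}=(1,2,1,0)$, to obtain $A_q^f(5,3)\le\gaussm{5}{1}{q}\cdot\gaussm{4}{1}{q}$. Your computations of $m(v)$, $u$, $\overline{u}$, $\overline{r}$, and the quotient $\#\mathcal{U}/\#\widehat{\mathcal{U}}$ are all accurate, and the supplementary direct counting argument in the style of Proposition~\ref{prop_a_4_2} is a valid unpacking of the same clique constraint.
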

\begin{proof}
  Since $\overline{(1,2,0,0)}=(1,2,1,0)$ the stated upper bound is obtained from the clique constraints corresponding to $\mathcal{V}^{1,2,0,0}_{5,q}$, 
  \highlight{i.e.\ we apply Theorem~\ref{thm_anticode_bound}.}
\end{proof}
We remark that Proposition~\ref{prop_a_5_3} is tight for $q=2$, i.e., a corresponding code of cardinality $465$ indeed exists. Such a code also exists if we prescribe 
a Singer cycle, i.e., a cyclic group of order $31$. 

\begin{Proposition}
  \label{prop_a_5_4}
  \begin{equation}
    A_q^f(5,4)\le\gaussm{5}{1}{q}\cdot \left(q^2+1\right)=q^6 + q^5 + 2q^4 + 2q^3 + 2q^2 + q + 1
  \end{equation}
\end{Proposition}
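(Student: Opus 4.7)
The plan is to apply the anticode-style upper bound of Theorem~\ref{thm_anticode_bound} with a carefully chosen reduction vector $r\in\mathcal{R}_{5,4}$. Reading the entry of the table for $\mathcal{R}_{5,4}$, namely $\{(1,0,1,0),(0,2,0,0),(0,0,2,0),(0,1,0,1)\}$, I expect the slickest choice to be $r=(0,2,0,0)$ (or its dual $(0,0,2,0)$). The intuition is simple: if two full flags in $\mathbb{F}_q^5$ share the same $2$-dimensional layer $W_2=W_2'$, then the containment chain forces $\dim(W_1\cap W_1')\ge 1$, $\dim(W_3\cap W_3')\ge 2$, and $\dim(W_4\cap W_4')\ge 2$, so $\fdist\le 0+0+1+1=2<4$, and hence at most one codeword can pass through any prescribed $2$-space.

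First I would verify the hypothesis of Theorem~\ref{thm_anticode_bound}. For $r=(0,2,0,0)$ one computes $u=(0,2,1,3)$, and then following Definition~\ref{def_r_bar} one obtains $\overline{u}=(0,2,2,3)$, whence $\overline{r}=(0,2,1,0)$. The required inequality reads
$$
d\;>\;\sum_{i=1}^{4}\bigl(m(v)_i-\overline{r}_i\bigr)\;=\;1+0+1+1\;=\;3,
$$
which holds for $d=4$. Applying the explicit ratio in Theorem~\ref{thm_anticode_bound} with $\mathcal{I}=\{2\}$ and $u_2=2$ then yields
$$
A_q^f(5,4)\;\le\;\frac{\#\mathcal{U}}{\#\widehat{\mathcal{U}}}\;=\;\frac{\gaussm{5}{2}{q}}{\gaussm{2}{2}{q}}\;=\;\gaussm{5}{2}{q}.
$$

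The final step is the elementary identity $\gaussm{5}{2}{q}=\gaussm{5}{1}{q}\cdot(q^2+1)$, which follows from $q^4-1=(q^2-1)(q^2+1)$ via
$$
\gaussm{5}{2}{q}\;=\;\frac{(q^5-1)(q^4-1)}{(q^2-1)(q-1)}\;=\;\frac{(q^5-1)(q^2+1)}{q-1}\;=\;\gaussm{5}{1}{q}\cdot(q^2+1).
$$
An equally short alternative route is to combine Proposition~\ref{prop_johnson_point} with the exact value $A_q^f(4,4)=q^2+1$ from Proposition~\ref{prop_a_max_distance}, which immediately yields $A_q^f(5,4)\le\gaussm{5}{1}{q}\cdot(q^2+1)$. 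Since both paths reduce to ingredients already developed in the paper, I anticipate no real obstacle; the only point requiring care is the computation of $\overline{r}$ to confirm that the chosen $r$ genuinely witnesses the distance inequality, i.e.\ that $r\in\mathcal{R}_{5,4}$ in the sense used by Theorem~\ref{thm_anticode_bound}.
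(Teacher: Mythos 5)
Your proof is correct and follows essentially the same route as the paper: apply Theorem~\ref{thm_anticode_bound} with a reduction vector $r\in\mathcal{R}_{5,4}$. The only difference is the choice of $r$. The paper takes $r=(1,0,1,0)$ (so $\overline{r}=(1,1,1,0)$), giving $\#\mathcal{U}/\#\widehat{\mathcal{U}}=\gaussm{5}{1}{q}\gaussm{4}{1}{q}/\gaussm{2}{1}{q}$, which simplifies to $\gaussm{5}{1}{q}(q^2+1)$; you take $r=(0,2,0,0)$ (so $\overline{r}=(0,2,1,0)$), giving $\gaussm{5}{2}{q}$. Both are valid members of $\mathcal{R}_{5,4}$, both satisfy the required inequality $d>\sum_i(m(v)_i-\overline{r}_i)=3$, and, as you verify, the two expressions are equal. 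Your suggested alternative via Proposition~\ref{prop_johnson_point} combined with $A_q^f(4,4)=q^2+1$ is also a clean one-liner, and the paper itself remarks (after Corollary~\ref{cor_a_6_6_j}) that all such upper bounds with $A_q^f(v-1,d)>1$ are in fact implied by Proposition~\ref{prop_johnson_point}.

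One small slip in your motivating paragraph: from $W_2=W_2'$ the containment chain does \emph{not} force $\dim(W_1\cap W_1')\ge 1$ (two distinct points of $W_2$ meet in $\{0\}$); it forces only $\dim(W_1\cap W_1')\ge 0$, hence $\idist(W_1,W_1')\le 1$, and the correct bound is $\fdist\le 1+0+1+1=3<4$, not $2<4$. This is exactly what your formal computation of $\overline{r}=(0,2,1,0)$ encodes ($\overline{r}_1=0$, not $1$), so the argument is unaffected; only the prose overstates the forced intersection in the first coordinate.
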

\begin{proof}
  Since $\overline{(1,0,1,0)}=(1,1,1,0)$ the stated upper bound is obtained from the clique constraints corresponding to $\mathcal{V}^{1,0,1,0}_{5,q}$, 
  \highlight{i.e.\ we apply Theorem~\ref{thm_anticode_bound}.} 
\end{proof}
We remark that Proposition~\ref{prop_a_5_4} is tight for $q=2$, i.e., a corresponding code of cardinality $155$ indeed exists. Such a code also exists if we prescribe 
a Singer cycle, i.e., a cyclic group of order $31$ \highlight{generated by} 
$$
  \highlight{\begin{pmatrix}
    0 & 1 & 0 & 0 & 0\\
    0 & 0 & 1 & 0 & 0\\
    0 & 0 & 0 & 1 & 0\\
    0 & 0 & 0 & 0 & 1\\
    1 & 0 & 1 & 1 & 1
  \end{pmatrix}.}
$$
\highlight{The flag code is given by five orbits of size $31$ and corresponding representatives are given by:}
\highlight{ 
$$
\left(
\left\langle\begin{pmatrix}
0\,0\,0\,0\,1 
\end{pmatrix}\right\rangle,
\left\langle\begin{pmatrix}
0\,0\,0\,1\,0\\ 
0\,0\,0\,0\,1 
\end{pmatrix}\right\rangle,
\left\langle\begin{pmatrix}
1\,1\,0\,0\,0\\ 
0\,0\,0\,1\,0\\ 
0\,0\,0\,0\,1 
\end{pmatrix}\right\rangle,
\left\langle\begin{pmatrix}
1\,1\,0\,0\,0\\ 
0\,0\,1\,0\,0\\ 
0\,0\,0\,1\,0\\ 
0\,0\,0\,0\,1
\end{pmatrix}\right\rangle
\right) 
$$
$$ 
\left(
\left\langle\begin{pmatrix}
0\,0\,0\,0\,1 
\end{pmatrix}\right\rangle,
\left\langle\begin{pmatrix}
0\,0\,1\,0\,0\\ 
0\,0\,0\,0\,1 
\end{pmatrix}\right\rangle,
\left\langle\begin{pmatrix}
1\,0\,0\,1\,0\\ 
0\,0\,1\,0\,0\\ 
0\,0\,0\,0\,1 
\end{pmatrix}\right\rangle,
\left\langle\begin{pmatrix}
1\,0\,0\,0\,0\\ 
0\,0\,1\,0\,0\\ 
0\,0\,0\,1\,0\\ 
0\,0\,0\,0\,1 
\end{pmatrix}\right\rangle
\right)
$$
$$
\left(
\left\langle\begin{pmatrix}
0\,0\,0\,0\,1 
\end{pmatrix}\right\rangle,
\left\langle\begin{pmatrix}
0\,1\,0\,1\,0\\ 
0\,0\,0\,0\,1 
\end{pmatrix}\right\rangle,
\left\langle\begin{pmatrix}
1\,0\,1\,0\,0\\ 
0\,1\,0\,1\,0\\ 
0\,0\,0\,0\,1 
\end{pmatrix}\right\rangle,
\left\langle\begin{pmatrix}
1\,0\,0\,0\,0\\ 
0\,1\,0\,1\,0\\ 
0\,0\,1\,0\,0\\ 
0\,0\,0\,0\,1 
\end{pmatrix}\right\rangle
\right)
$$
$$
\left(
\left\langle\begin{pmatrix}
0\,0\,0\,0\,1 
\end{pmatrix}\right\rangle,
\left\langle\begin{pmatrix}
0\,1\,1\,1\,0\\ 
0\,0\,0\,0\,1 
\end{pmatrix}\right\rangle,
\left\langle\begin{pmatrix}
0\,1\,0\,0\,0\\ 
0\,0\,1\,1\,0\\ 
0\,0\,0\,0\,1 
\end{pmatrix}\right\rangle,
\left\langle\begin{pmatrix}
0\,1\,0\,0\,0\\ 
0\,0\,1\,0\,0\\ 
0\,0\,0\,1\,0\\ 
0\,0\,0\,0\,1 
\end{pmatrix}\right\rangle
\right)
$$
$$
\left(
\left\langle\begin{pmatrix}
0\,0\,0\,0\,1 
\end{pmatrix}\right\rangle,
\left\langle\begin{pmatrix}
1\,0\,0\,0\,0\\ 
0\,0\,0\,0\,1 
\end{pmatrix}\right\rangle,
\left\langle\begin{pmatrix}
1\,0\,0\,0\,0\\ 
0\,1\,1\,0\,0\\ 
0\,0\,0\,0\,1 
\end{pmatrix}\right\rangle,
\left\langle\begin{pmatrix}
1\,0\,0\,0\,0\\ 
0\,1\,0\,0\,0\\ 
0\,0\,1\,0\,0\\ 
0\,0\,0\,0\,1 
\end{pmatrix}\right\rangle
\right)
$$
}

\begin{Proposition}
  \label{prop_a_5_5}
  \begin{equation}
    A_q^f(5,5)\le\gaussm{5}{1}{q}=q^4+q^3+q^2+q+1
  \end{equation}  
\end{Proposition}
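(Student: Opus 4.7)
The plan is to apply Theorem~\ref{thm_anticode_bound} with reduction vector $r=(1,0,0,0)$, in direct analogy with the proofs of Propositions~\ref{prop_a_5_3} and~\ref{prop_a_5_4}. By Example~\ref{ex_R_v_d} we have $\overline{(1,0,0,0)}=(1,1,0,0)$, so with $m(5)=(1,2,2,1)$ the hypothesis $d>\sum_{i=1}^{v-1}(m(v)_i-\overline{r}_i)$ reduces to $5>0+1+2+1=4$, which holds.

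For this choice the active index set is $\mathcal{I}=\{1\}$ with $u_1=1-m(5)_1+r_1=1$, so $\mathcal{U}$ is just the set $\gauss{5}{1}{q}$ of points of $\mathbb{F}_q^5$, giving $\#\mathcal{U}=\gaussm{5}{1}{q}$. For any fixed reference full flag $\Lambda'=(W_1',\dots,W_4')$, the condition $U_1\le W_1'$ on a $1$-space $U_1$ forces $U_1=W_1'$, so $\#\widehat{\mathcal{U}}=1$. Theorem~\ref{thm_anticode_bound} then yields $A_q^f(5,5)\le\gaussm{5}{1}{q}/1=q^4+q^3+q^2+q+1$.

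Intuitively, the argument matches the spirit of Propositions~\ref{prop_a_3_2} and~\ref{prop_a_4_3}: if two distinct codewords shared their bottom point $W_1=W_1'$, then Lemma~\ref{lemma_intersection_propagation} would force $\dim(W_i\cap W_i')\ge 1$ for $i\ge 2$, and in particular $\idist(W_2,W_2')\le 1$. Hence $\fdist(\Lambda,\Lambda')\le 0+1+2+1=4<5$, a contradiction. Thus the map $\mathcal{C}\to\gauss{5}{1}{q}$ sending a codeword to its bottom point is injective.

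The main ``obstacle'' is essentially nil: the proof is a one-line invocation of Theorem~\ref{thm_anticode_bound}. The only real choice is which $r\in\mathcal{R}_{5,5}=\{(1,0,0,0),(0,1,1,0),(0,0,0,1)\}$ to use. The vector $(0,0,0,1)$ gives the same bound by duality (consider hyperplanes instead of points), while a quick computation shows that $(0,1,1,0)$ yields the strictly weaker ratio $\gaussm{5}{1}{q}\cdot\gaussm{4}{1}{q}/(q+1)^2$. Hence the stated bound is the best obtainable from this anticode framework.
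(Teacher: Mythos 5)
Your proof is correct and takes exactly the same route as the paper: invoke Theorem~\ref{thm_anticode_bound} with $r=(1,0,0,0)$, using $\overline{(1,0,0,0)}=(1,1,0,0)$ to verify the hypothesis $d>\sum_i(m(v)_i-\overline{r}_i)$. The extra unpacking of $\mathcal{U}$, $\widehat{\mathcal{U}}$ and the intuitive injectivity argument via Lemma~\ref{lemma_intersection_propagation} are sound but beyond what the paper records.
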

\begin{proof}
  Since $\overline{(1,0,0,0)}=(1,1,0,0)$ the stated upper bound is obtained from the clique constraints corresponding to $\mathcal{V}^{1,0,0,0}_{5,q}$, 
  \highlight{i.e.\ we apply Theorem~\ref{thm_anticode_bound}.}
\end{proof}   
We remark that Proposition~\ref{prop_a_5_5} is tight for $q=2$, i.e., a corresponding code of cardinality $31$ indeed exists. Such a code also exists if we prescribe 
a Singer cycle, i.e., a cyclic group of order $31$. 

\begin{table}[htp]
  \begin{center}
    \begin{tabular}{rrrrrrrrrr}
    \hline
    $v/d$ & 1  & 2          & 3       & 4      & 5      & 6     & 7 & 8 & 9 \\
    \hline
    2 & 3      &            &         &        &        &   \\
    3 & 21     & 7          &         &        &        &   \\ 
    4 & 315    & 105        & 15      & 5      &        &   \\
    5 & 9765   & 3120--3255 & 465     & 155    & 31     & 9 \\
    \hline
    \end{tabular}
    \caption{Bounds \highlight{and exact values} for $A_2^f(v,d)$ for $v\le \highlight{5}$.}
    \label{table_bounds_binary}
  \end{center}
\end{table}

For the binary case $q=2$ we can say a bit more. Except for $(v,d)=(5,2)$ the upper bounds for $v\le 5$ are attained, see Table~\ref{table_bounds_binary}. The lower bounds 
have been mainly obtained using the ILP approach, with prescribed automorphisms, see Section~\ref{sec_ILP} and Section~\ref{sec_bounds} for the details on the chosen groups. 
For $v=6$ and $v=7$ we list the upper bounds resulting from Proposition~\ref{prop_johnson_point} and Proposition~\ref{prop_cdc} in Table~\ref{table_upper_bounds_binary_v_7}. 
As exact values we have $A_2^f(6,1)=615195$, $A_2^f(6,7)=63$,  $A_2^f(6,8)=21$, and $A_2^f(6,\highlight{9})=9$ for $v=6$. The inequalities $224\le A_2^f(6,6) \le 567$ show that it might 
be hard to obtain narrow bounds for $A_2^g(v,d)$ even for medium sized parameters.

\begin{table}[htp]
  \tabcolsep=3.2pt
  \begin{center}
    \begin{tabular}{rrrrrrrrrrrrr}
    \hline
    $v/d$ & 1  & 2          & 3       & 4      & 5      & 6     & 7 & 8 & 9 & 10 & 11 & 12\\
    \hline
    6 & \textbf{615195} & 205065   & 29295 & 9765 & 1953 & 567 & \textbf{63} & \textbf{21} & \textbf{9} \\
    7 & \textbf{78129765} & 26043255 & 3720465 & 1240155 & 248031 & 72009 & 8001 & 2667 & 1143 & 127 & 41 & \textbf{17}\\
    \hline
    \end{tabular}
    \caption{Upper bounds for $A_2^f(6,d)$ and $A_2^f(7,d)$ \highlight{(tight bounds in bold)}.}
    \label{table_upper_bounds_binary_v_7}
  \end{center}
\end{table}

\section{Conclusion and future research}
\label{sec_conclusion}

Comparing the data of Table~\ref{table_asymptotic_sphere_packing} and Table~\ref{table_asymptotic_beta} we conjecture that the upper bounds for $A_q^f(v,d)$ induced by 
Proposition~\ref{prop_johnson_point} and Corollary~\ref{cor_cdc} are always tighter than the \emph{sphere packing bound} for flag codes, see 
\cite{with_flags,liebhold2018generalizing}. Of course it would be interesting to determine an explicit formula for the leading coefficient of the  
sphere packing bound for $A_q^f(v,d)$, or the sphere covering bound, as we have determined for our upper bound in Proposition~\ref{prop_asymptotic_upper_bound}. Intended more 
as an inspiring challenge instead of being based on rigorous insights, we conjecture that the bound of Proposition~\ref{prop_asymptotic_upper_bound} is tight up to the terms of 
lower order. To this end a series of general constructions is desirable, see e.g.~\cite{phd_liebhold,with_flags}, where the authors have shown that flag codes 
can be superior to constant dimension codes. In those cases that we have investigated the order of magnitude of the sphere covering bound is not exceeded.   
Is seems that the parametric construction of good flag codes is a teaser. In those cases in 
Section~\ref{sec_bounds} where proposed upper bounds for $A_q^f(v,d)$ are attained by a flag code with a Singer group as subgroup of automorphisms for $q=2$, we conjecture that 
this is the case for all field sizes $q$.  

One may introduce a more general version of \highlight{Theorem~\ref{thm_anticode_bound}} 
based on Lemma~\ref{lemma_clique} instead of Corollary~\ref{cor_clique}, see the discussion after Example~\ref{ex_a_2_f_6_5_234}. However, it is not clear if the corresponding bounds will be competitive. 

The determination of tighter bounds for $A_2^f(6,d)$ and $A_2^f(7,d)$ seems to be an interesting an challenging open problem. Of course the situation for $A_q^f(v,d;T)$, i.e.\ 
non-full flag codes, and for $A_q^s(v,d)$ is even wider open than it is for $A_q^f(v,d)$.  

\section*{Acknowledgements} 
The author thanks Gabriele Nebe for her comments and remarks on an earlier draft. Especially, the idea to study the quantity $A_q^c(v,d)$ and compare it with $A_q^f(v,d)$ was hers. 
\highlight{Moreover I am indebted to the anonymous reviewers whose remarks and comments allowed me to improve the presentation of the paper.} 


\end{document}